\title{Semi-linear evolution equations via positive semigroups}
\author[1]{Wolfgang Arendt}
\author[2]{Daniel Daners}%
\affil[1]{Institut f\"ur Angewandte Analysis, Universit\"at Ulm, Germany\authorcr%
  \nolinkurl{wolfgang.arendt@uni-ulm.de}}%
\affil[2]{School of Mathematics and Statistics, University of Sydney, NSW 2006, Australia\authorcr%
  \nolinkurl{daniel.daners@sydney.edu.au}}%
\date{\today}
\numberwithin{equation}{section}
\numberwithin{figure}{section}
\DeclareMathOperator{\loc}{loc}
\theoremstyle{plain}
\newtheorem{theorem}{Theorem}[section]
\newtheorem{lemma}[theorem]{Lemma}
\newtheorem{proposition}[theorem]{Proposition}
\newtheorem{corollary}[theorem]{Corollary}
\theoremstyle{definition}
\newtheorem{definition}[theorem]{Definition}
\newtheorem{example}[theorem]{Example}
\newtheorem{examples}[theorem]{Examples}
\theoremstyle{remark}
\newtheorem{remark}[theorem]{Remark}
\DeclareMathOperator{\sign}{sign}
\DeclareMathOperator{\trace}{tr}
\DeclarePairedDelimiter{\tnorm}{|\!|\!|}{|\!|\!|}
\DeclarePairedDelimiter{\orderint}{\dlb}{\drb}
\DeclarePairedDelimiter{\orderint}{[\![}{]\!]}
\let\oldthebibliography\thebibliography
\renewcommand\thebibliography[1]{%
  \oldthebibliography{#1}
  \setlength{\parskip}{.2ex}
  \setlength{\itemsep}{0pt plus 0.3ex}
  \small
}
\begin{document}
\maketitle

\renewcommand{\thefootnote}{}%
\footnotetext{\textbf{Mathematics Subject Classification (2020): 34G20, 47N20, 47H07, 35K20} }%
\footnotetext{\footnotesize\textbf{Keywords:} Semi-linear evolution equation, initial boundary value problems, differential equations in ordered Banach spaces, Lotka-Volterra competition system, logistic equation}%
\begin{center}
  \slshape Dedicated to Professor Herbert Amann on the occasion of his 85th birthday
\end{center}
\begin{abstract}
  We study semi-linear evolutionary problems where the linear part is the generator of a positive $C_0$-semigroup. The non-linear part is assumed to be quasi-increasing. Given an initial value in between a sub- and a super-solution of the stationary problem we find a solution of the semi-linear evolutionary problem. Convergence as $t\to\infty$ is also studied for the solutions. Our results are applied to the logistic equation with diffusion, to a Lotka-Volterra competition model and the Fisher equation from population genetics.
\end{abstract}
\section{Introduction}
The method of sub- and super-solutions is one of the main methods to prove the existence of equilibria for semi-linear elliptic boundary value problems. The use of positive operators on ordered Banach spaces for that purpose was made popular in Amann's seminal paper \cite{amann:76:fpe}. The method can also be used to show the existence of periodic or of quasi-periodic solutions, see for instance \cite{dancer:91:sfp,hess:91:ppb} or the survey \cite{hirsch:05:mmr} including many references and a historical account. The method of sub- and super-solutions not only provides a tool to prove the existence of equilibria, but can also be used to prove the existence of solutions to the corresponding initial value problem. It furthermore allows to establish some convergence and stability results. One particularly fruitful approach in that direction was developed in \cite{sattinger:71:mmn} for parabolic boundary value problems. Our aim is to establish and extend such results to evolutionary problems defined by the non-linear perturbation of the generator of a positive $C_0$-semigroup on an ordered Banach space. This allows us to establish results of existence and uniqueness as well as the asymptotic behaviour of solutions as $t\to\infty$ under minimal regularity assumptions.

Throughout we assume that $E$ is an ordered Banach space with a normal cone $E_+$, see Section~\ref{sec:fixed-point-map} for a definition. A partial order on $E$ is given by $u\leq v$ if and only if $v-u\in E_+$.  Examples of such spaces are $L^p(\Omega)$ for $1\leq p\leq\infty$ or $C(\bar\Omega)$ with $\Omega\subseteq\mathbb R^N$ open and bounded with the order being defined pointwise. Given $\underline u$, $\overline u\in E$ with $\underline u\leq\overline u$ we call
\begin{equation*}%
  [\underline u,\overline u]:=\{u\in E\colon \underline u\leq u\leq\overline u\}
\end{equation*}
an \emph{order interval}. We furthermore assume that $(S(t))_{t\geq 0}$ is a positive $C_0$-semigroup on $E$ with generator $-A$. To say that $(S(t))_{t\geq 0}$ is \emph{positive} means that $S(t)E_+\subseteq E_+$ for all $t\geq 0$. Let $F\in C([\underline u,\overline u],E)$. We study the existence and properties of mild solutions to the semi-linear Cauchy problem
\begin{equation}
  \label{eq:sle-abstract}
  \begin{aligned}
    \dot u(t)+Au(t) & =F(u(t))\qquad \text{for $t>0$,} \\
    u(0)            & =u_0
  \end{aligned}
\end{equation}
with $u_0\in[\underline u,\overline u]$. A mild solution of \eqref{eq:sle-abstract} is a function $u\in C([0,\infty),E)$ such that
\begin{equation}
  \label{eq:mild-solution}
  u(t)=S(t)u_0+\int_0^t S(t-s)F(u(s))\,ds
\end{equation}
for all $t\geq 0$. We will assume that $F\colon [\underline u,\overline u]\to E$ is \emph{quasi-increasing}, which means that for some $\mu\in\mathbb R$, the shifted function $F_\mu$ defined by
\begin{equation}
  \label{eq:quasi-increasing}
  F_\mu(v):=F(v)+\mu v
\end{equation}
is increasing. A function $F\colon [\underline u,\overline u]\to E$ is called \emph{increasing} if $F(v_1)\leq F(v_2)$ for all $v_1,v_2\in[\underline u,\overline u]$ with $v_1\leq v_2$. We furthermore  assume that $\underline u$ and $\overline u$ are weak sub- and super-solutions of the stationary problem
\begin{equation}
  \label{eq:stationary-equation}
  Av=F(v)
\end{equation}
associated with \eqref{eq:sle-abstract}. For a definition we need the \emph{dual cone}
\begin{equation}
  \label{eq:dual-cone}
  E_+':=\{v'\in E'\colon\langle v',v\rangle\geq 0\text{ for all }v\in E_+\}.
\end{equation}
Given the dual operator $A'$ of $A$ we set $D(A')_+:=D(A')\cap E_+'$.
\begin{definition}[sub/super-solution]
  \label{def:sub-equilibria}
  We call $\underline u\in E$ a \emph{weak sub-solution} if $A\underline u\leq F(\underline u)$ weakly and $\overline u\in E$ a \emph{weak super-solution} of \eqref{eq:stationary-equation} if $A\overline u\geq F(\overline u)$ weakly, that is,
  \begin{equation}
    \label{eq:weak-s-equilibria}
    \langle\underline u, A' v'\rangle
    \leq\langle F(\underline u), v'\rangle
    \qquad\text{and}\qquad
    \langle\overline u, A' v'\rangle
    \geq\langle F(\overline u), v'\rangle
  \end{equation}
  for all $v'\in D(A')_+$. If $\underline u\leq\overline u$ we call $\underline u,\overline u$ an \emph{ordered pair of weak sub- and super-solutions} of \eqref{eq:stationary-equation}. We call $u$ a \emph{solution} of \eqref{eq:stationary-equation} or an \emph{equilibrium} of \eqref{eq:sle-abstract} if $v\in D(A)$ and $Av=F(v)$.
\end{definition}
We note that $u\in E$ is an equilibrium if and only if $u$ is a weak sub- and a weak super-solution. In fact, then $\langle u,A'v'\rangle=\langle F(u),v'\rangle$ for all $v'\in D(A')$. Since $A$ is closed and $D(A)$ is dense, this implies that $u\in D(A)$ and $Au=F(u)$. If $v\in E$, then the constant function $u(t):=v$ for all $t\geq 0$ is a solution of~\eqref{eq:stationary-equation} if and only if $v$ is an equilibrium. A function $u\colon [0,\infty)\to E$ is called \emph{increasing} if $u(t)\leq u(s)$ for all $0\leq t\leq s$ and \emph{decreasing} if $u(t)\geq u(s)$ for all $0\leq t\leq s$.

We also sometimes assume that $E$ has order continuous norm. An \emph{order continuous norm} in $E$ means that any increasing (or decreasing) sequence in an order interval converges with respect to the norm in $E$, where a sequence $(u_n)_{n\in\mathbb N}$ is called \emph{increasing} in $E$ if $u_n\leq u_{n+1}$ for all $n\in\mathbb N$. It is called \emph{decreasing} if the inequality is reversed. Examples of ordered Banach spaces with order continuous norm are the $L^p$-spaces with $1\leq p<\infty$. The order continuity comes from the monotone convergence and dominated convergence theorems. If $E$ has order continuous norm, then the cone is normal. The ordered Banach spaces $C(\bar\Omega)$ and $L^\infty(\Omega)$ do not have order continuous norm. The main result of this paper is the following theorem.
\begin{theorem}
  \label{thm:existence}
  Suppose that $E$ is an ordered Banach space with normal cone and let $-A$ be the generator of a positive $C_0$-semigroup $S((t))_{t\geq 0}$ on $E$. Further assume that $\underline u,\overline u\in E$ is an ordered pair of weak sub- and super-solutions of \eqref{eq:stationary-equation} and let $F\in C([\underline u,\overline u] ,E)$ be quasi-increasing. If either $E$ has order continuous norm or $S(t)$ is compact for all $t>0$, then the following assertions hold.
  \begin{enumerate}[\normalfont (i)]
  \item For each initial value $u_0\in[\underline u,\overline u]$ there exists a minimal mild solution $u_{\min}$ and a maximal mild solution $u_{\max}$ of~\eqref{eq:sle-abstract}, that is, any mild solution $u\colon[0,\infty)\to [\underline u,\overline u]$ of \eqref{eq:sle-abstract} with $u(0)=u_0$ satisfies $u_{\min}(t)\leq u(t)\leq u_{\max}(t)$ for all $t\geq 0$.
  \item Let $u_{\min}$ and $u_{\max}$ and $\tilde u_{\min}$ and $\tilde u_{\max}$ be the minimal and maximal mild solutions of \eqref{eq:sle-abstract} with initial values $u_0$ and $\tilde u_0$ in $[\underline u,\overline u]$, respectively. If $u_0\leq\tilde u_0$, then $u_{\min}\leq \tilde u_{\min}$ and $u_{\max}\leq \tilde u_{\max}$.
  \item Denote by $U_{\min}$ the minimal mild solution with $u(0)=\underbar u$ and by $U_{\max}$ the maximal mild solution with $u(0)=\overline u$ of \eqref{eq:sle-abstract}. Then for every $u_0\in[\underline u,\overline u]$ and every mild solution $u\colon[0,\infty)\to [\underline u,\overline u]$ of \eqref{eq:sle-abstract} with $u(0)=u_0$
    \begin{equation*}
      U_{\min}(t)\leq u(t)\leq U_{\max}(t)
    \end{equation*}
    for all $t\geq 0$.
  \item The function $U_{\min}\in C([0,\infty),E)$ is increasing and $U_{\max}\in C([0,\infty),E)$ is decreasing. Moreover,
    \begin{equation*}
      u_*:=\lim_{t\to\infty}U_{\min}(t)
      \qquad\text{and}\qquad
      u^*:=\lim_{t\to\infty}U_{\max}(t)
    \end{equation*}
    exist and $u_*$ and $u^*$ are the minimal and maximal solutions of \eqref{eq:stationary-equation} in $[\underline u,\overline u]$.
  \end{enumerate}
\end{theorem}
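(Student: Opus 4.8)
The plan is to normalise the problem so that the nonlinearity is increasing, and then run a monotone iteration on the variation-of-constants map, using the sub/super-solution hypotheses only to trap the iterates inside the order interval. First I would exploit quasi-increasingness: writing $F(v)=F_\mu(v)-\mu v$ and $B:=A+\mu$, the operator $-B$ generates the positive rescaled semigroup $T(t):=e^{-\mu t}S(t)$, the notion of mild solution for $\dot u+Bu=F_\mu(u)$ coincides with that for \eqref{eq:sle-abstract} (a bounded perturbation), and $D(B')_+=D(A')_+$, so the weak sub/super-solution inequalities are unchanged. Hence I may assume $F$ is increasing and $S$ positive. A by-product is that $F(\underline u)\le F(v)\le F(\overline u)$ for all $v\in[\underline u,\overline u]$, so $F$ is bounded on the (norm-bounded, by normality) order interval. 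I then study the map $\Phi_{u_0}(u)(t):=S(t)u_0+\int_0^t S(t-s)F(u(s))\,ds$ on functions $u\colon[0,\infty)\to[\underline u,\overline u]$; positivity of $S$ and monotonicity of $F$ make $\Phi_{u_0}$ order-preserving, and its fixed points are exactly the mild solutions with $u(0)=u_0$.

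The technical heart is to show that the weak super-solution inequality keeps the orbit below $\overline u$, i.e.\ $W(t):=S(t)\overline u+\int_0^t S(t-s)F(\overline u)\,ds\le\overline u$ for all $t$ (and dually $\ge\underline u$ for $\underline u$). Testing the super-solution inequality with $v'=(\lambda+A')^{-1}w'$ for $w'\in E_+'$ and using positivity of the resolvent $R_\lambda:=(\lambda+A)^{-1}\ge 0$ together with $A'(\lambda+A')^{-1}=I-\lambda(\lambda+A')^{-1}$ yields, via the bipolar theorem, the resolvent inequality $\lambda R_\lambda\overline u+R_\lambda F(\overline u)\le\overline u$ for all large $\lambda$. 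Feeding this into the backward-Euler scheme $W_k=\tfrac{n}{t}R_{n/t}W_{k-1}+R_{n/t}F(\overline u)$ with $W_0=\overline u$, whose iterates converge to $W(t)$ by the exponential formula, an induction using positivity of $R_\lambda$ and the resolvent inequality gives $W_k\le\overline u$, hence $W(t)\le\overline u$. This step, bridging the dual (weak) definition with the dynamics, is where positivity is essential and is the main obstacle; it may alternatively be quoted from the preparatory material of Section~\ref{sec:fixed-point-map}.

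For (i) I would iterate: with $u_0$ fixed, set $\overline v_0\equiv\overline u$, $\underline v_0\equiv\underline u$, $\overline v_{n+1}=\Phi_{u_0}(\overline v_n)$, $\underline v_{n+1}=\Phi_{u_0}(\underline v_n)$. The comparison step gives $\Phi_{u_0}(\overline v_0)\le\overline v_0$ and $\Phi_{u_0}(\underline v_0)\ge\underline v_0$ (using $u_0\le\overline u$, resp.\ $u_0\ge\underline u$, and $S\ge0$), so monotonicity of $\Phi_{u_0}$ makes $(\overline v_n)$ decreasing and $(\underline v_n)$ increasing with $\underline u\le\underline v_n\le\overline v_n\le\overline u$ pointwise. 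For each $t$ these monotone order-bounded sequences converge in norm, either by order continuity of the norm, or — in the compact case — because $\{\overline v_n(t)\}$ is relatively compact ($S(t)\overline u$ is a point and the integral term lies in a relatively compact set, $F$ being bounded and $S(t-s)$ compact) and a monotone sequence with a norm-convergent subsequence converges in a normal cone. Writing $u_{\min}(t):=\lim\underline v_n(t)$ and $u_{\max}(t):=\lim\overline v_n(t)$, dominated convergence (integrands converge in norm and are uniformly bounded) passes the limit through $\Phi_{u_0}$, so $u_{\min}=\Phi_{u_0}(u_{\min})$ and $u_{\max}=\Phi_{u_0}(u_{\max})$; continuity in $t$ then follows from the right-hand side. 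Minimality/maximality is immediate: any mild solution $u$ valued in $[\underline u,\overline u]$ satisfies $\underline v_0\le u\le\overline v_0$, and applying the monotone fixed-point map repeatedly gives $\underline v_n\le u\le\overline v_n$, hence $u_{\min}\le u\le u_{\max}$. Part (ii) follows since $u_0\le\tilde u_0$ gives $\Phi_{u_0}\le\Phi_{\tilde u_0}$ pointwise, so an induction on the two iterations from the common constants yields $u_{\min}\le\tilde u_{\min}$, $u_{\max}\le\tilde u_{\max}$; and (iii) is then (i) combined with (ii) applied to $\underline u\le u_0\le\overline u$.

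Finally I would establish (iv). The translate $t\mapsto U_{\min}(t+h)$ is a mild solution with initial value $U_{\min}(h)\ge\underline u$, so minimality together with (ii) gives $U_{\min}(t+h)\ge u_{\min}^{U_{\min}(h)}(t)\ge u_{\min}^{\underline u}(t)=U_{\min}(t)$; thus $U_{\min}$ is increasing (and $U_{\max}$ decreasing dually). Being monotone and order-bounded, $U_{\min}(t)$ converges (again by order continuity or compactness of the orbit) to some $u_*\in[\underline u,\overline u]$ as $t\to\infty$. Passing $t\to\infty$ in the translation identity $U_{\min}(t+\tau)=S(\tau)U_{\min}(t)+\int_0^\tau S(\tau-s)F(U_{\min}(t+s))\,ds$ shows $u_*=S(\tau)u_*+\int_0^\tau S(\tau-s)F(u_*)\,ds$ for every $\tau$, so the constant $u_*$ is a mild solution, which by the remark preceding the theorem forces $u_*\in D(A)$ and $Au_*=F(u_*)$. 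Minimality of $u_*$ among equilibria in $[\underline u,\overline u]$ follows because any such equilibrium $w$ is a constant solution with $w\ge\underline u$, whence $U_{\min}(t)\le u_{\min}^{w}(t)\le w$ for all $t$ and $u_*\le w$; dually $u^*:=\lim U_{\max}(t)$ is the maximal equilibrium.
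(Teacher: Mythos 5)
Your proposal is correct, and its skeleton matches the paper's: reduce to increasing $F$ via the shift $A_\mu:=A+\mu$, $F_\mu:=F+\mu$ (the paper's Lemma~\ref{lem:sle-abstract-scaled}); run the monotone iteration from the constant functions $\underline u,\overline u$ (Definition~\ref{def:interated-sequences}, Proposition~\ref{prop:iterated-sequences}); obtain (ii) and (iii) by induction on the order-preserving fixed-point map; and get (iv) from the translation argument (Lemma~\ref{lem:shift}) plus comparison. However, three key steps are implemented by genuinely different means. First, for the crucial invariance $S(t)\overline u+\int_0^tS(s)F(\overline u)\,ds\le\overline u$, the paper differentiates $t\mapsto\langle v',\overline w(t)\rangle$ for $v'\in D(A')_+$, applies the weak super-solution inequality at $S(t)'v'$, and concludes via Corollary~\ref{cor:weak-positivity}; you instead convert the weak inequality into the resolvent inequality $\lambda R_\lambda\overline u+R_\lambda F(\overline u)\le\overline u$ (testing with $(\lambda+A')^{-1}w'$ and using Proposition~\ref{prop:weak-positivity}) and propagate it through the implicit Euler scheme. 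Both rest on the same duality-to-primal positivity principle; yours trades the paper's one-line differentiation for the convergence of backward Euler to the mild solution, which is standard but quotable rather than trivial. Second, for convergence of the iterates, the paper works in $L_{\loc}^1([0,\infty),E)$ and needs compactness of the full fixed-point map $G$ (Lemma~\ref{lem:G-compact}, via Arzel\`a--Ascoli) together with continuity of $G$ in that topology; you argue pointwise in $t$, using the $S(\delta)$-factorization for pointwise relative compactness, the monotone-subsequence lemma, and dominated convergence to pass the limit through the fixed-point map. Your route is more elementary and bypasses the function-space compactness lemma; the paper's buys the locally uniform convergence recorded in Proposition~\ref{prop:uniform-convergence}. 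Third, to identify $u_*=\lim_{t\to\infty}U_{\min}(t)$ as an equilibrium, the paper rescales to an exponentially stable semigroup and uses $A^{-1}=\int_0^\infty S(t)\,dt$ (Proposition~\ref{prop:convergence-equilibrium}); you pass to the limit in the translation identity and invoke that a constant mild solution is an equilibrium --- equally valid and arguably cleaner.

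Two facts you assert without proof should each get a line in a full write-up: the relative compactness of the orbit $\{U_{\min}(t)\colon t\ge 0\}$ in the compact-semigroup case (the paper devotes Proposition~\ref{prop:compact-orbit} to this; alternatively your own $S(\delta)$-factorization applied to $U_{\min}(t)=S(1)U_{\min}(t-1)+\int_0^1S(1-r)F(U_{\min}(t-1+r))\,dr$ settles it without the rescaling), and the exponential-formula convergence of the Euler scheme used in your invariance step. Neither is a gap in the mathematics, only in the level of detail.
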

We emphasise that $F\colon[\underline u,\overline u]\to E$ is only assumed to be continuous, so we cannot expect the uniqueness of solutions for \eqref{eq:sle-abstract}. The above theorem shows that for any $u_0\in [\underline u,\overline u]$ there exists a minimal and a maximal solution. If $F$ is locally Lipschitz in $[\underline u,\overline u]$, then the solutions to any given initial value turns out to be unique, that is, $u_{\min}=u_{\max}$, but not necessarily otherwise, see Section~\ref{sec:uniqueness}.

We continue by making some remarks about the assumptions on the non-linearities discussed above.

\begin{remark}
  \label{rem:superposition-operator}
  (a) Let $\Omega\subseteq\mathbb R^N$ be a bounded and open. Let $E=L^p(\Omega)$ with $1\leq p<\infty$. The non-linearity $F$ is typically a substitution operator on $L^p(\Omega)$ associated with a function $f\in C(\overline\Omega\times\mathbb R)$ that is Lipschitz continuous on bounded sets of $\mathbb R$ uniformly with respect to $x\in\bar\Omega$. This means that for every bounded interval $[m_1,m_2]\subseteq\mathbb R$ there exists $L>0$ such that
  \begin{equation}
    \label{eq:f-lipschitz}
    |f(x,\xi_2)-f(x,\xi_1)|\leq L|\xi_2-\xi_1|
  \end{equation}
  for all $\xi_1,\xi_2\in[m_1,m_2]$ and all $x\in\bar\Omega$. We define the corresponding substitution operator by
  \begin{equation}
    \label{eq:substitution-operator}
    [F(u)](x):=f(x,u(x))
  \end{equation}
  for every function $u\colon\Omega\to\mathbb R$ and $x\in\Omega$. Asking that $F\colon L^p(\Omega)\to L^p(\Omega)$ is Lipschitz continuous, or even just a function between those spaces is a very strong condition. It implies that $f$ be of at most linear growth in $\xi\in\mathbb R$, see for instance \cite{appell:90:nso}. Even simple non-linearities such as the logistic growth $au-mu^2$ do not fulfil this condition. The way out is that generally the sub- and super-solutions are in $L^\infty(\Omega)$ and hence the restriction of $F$ to the order interval $[\underline u,\overline u]$ fulfils the Lipschitz condition.

  (b) Generally $f$ is not increasing, but if the order interval is bounded in $L^\infty(\Omega)$, then $f$ is quasi-increasing on that order interval. Indeed, let $m_1:=\inf_{x\in\Omega}\underline u(x)$, $m_2:=\sup_{x\in\Omega}\overline u(x)$ and $\mu:=L$. Then \eqref{eq:f-lipschitz} implies that for $m_1\leq\xi_1\leq\xi_2\leq m_2$
  \begin{equation*}
    f(x,\xi_2)-f(x,\xi_2)\geq -L(\xi_2-\xi_1)
  \end{equation*}
  for all $x\in\bar\Omega$. Hence $\xi\mapsto f(x,\xi)+L\xi$ is increasing on $[m_1,m_2]$ for all $x\in\Omega$ and thus the corresponding substitution operator is quasi-increasing on $[\underline u,\overline u]$. This is a condition that first seems to appear in \cite{amann:72:ems}.
\end{remark}
\section{The fixed point map and mild solutions}
\label{sec:fixed-point-map}
We saw in the introduction that mild solutions are solutions of the integral equation \eqref{eq:mild-solution}. That integral equation can be seen as a fixed point equation. In this section we study properties of this fixed point map.

We start by introducing some terminology. Let $Z$ be a vector space. A subset $Z_+$ of $Z$ is called a \emph{cone} if $Z_++Z_+\subseteq Z_+$ and $[0,\infty)Z_+\subseteq Z_+$. The cone is called \emph{proper} if $Z_+\cap (-Z_+)=\{0\}$. An \emph{ordered vector space} is a vector space $Z$ with proper cone $Z_+$. Then $u\leq v$ if and only if $v-u\in Z_+$ defines a partial order on $Z$. An \emph{ordered Banach space} is an ordered vector space $Z$ with a complete norm such that the positive cone $Z_+$ is closed. Note that order intervals in $Z$ are convex and closed. The cone $E_+$ is called \emph{normal} if all order intervals are norm bounded. The spaces $L^p(\Omega)$, $1\leq p\leq \infty, $with $\Omega\subseteq\mathbb R^N$ open, and $E=C(\bar\Omega)$ for $\Omega\subseteq\mathbb R^N$ bounded, are ordered Banach spaces with normal cone.

Recall that $E$ is an ordered Banach space with normal cone and $S((t))_{t\geq 0}$is a positive $C_0$-semigroup on $E$ with generator $-A$. We start by characterising and justifying the term ``mild solution'' for \eqref{eq:mild-solution} used in the introduction. We note that the order structure is irrelevant for that, it holds for an arbitrary $C_0$-semigroup on a Banach space.
\begin{proposition}
  \label{prop:weak-solution-characterisation}
  Suppose that $u_0\in E$ and that $u,f\in C([0,\infty),E)$. Then the following statements are equivalent.
  \begin{enumerate}[\normalfont (i)]
  \item For all $t\geq 0$
    \begin{equation}
      \label{eq:w-fixed-point}
      u(t)=S(t)u_0+\int_0^tS(t-s)f(s)\,ds;%
    \end{equation}
  \item $u(0)=u_0$, $[t\mapsto \langle v',u(t)\rangle]\in C^1([0,\infty))$ and
    \begin{equation*}
      \frac{d}{dt}\langle v',u(t)\rangle+\langle A'v',u(t)\rangle
      =\langle v',f(t)\rangle
    \end{equation*}
    for all $t\geq 0$ and all $v'\in D(A')$.
  \end{enumerate}
\end{proposition}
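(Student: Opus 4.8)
The plan is to prove the two implications separately, the engine in both being the elementary differentiation formula
\[
\frac{d}{dt}\langle v',S(t)x\rangle=-\langle A'v',S(t)x\rangle
\qquad(v'\in D(A'),\ x\in E),
\]
which I would record first. For $x\in D(A)$ it follows from $\frac{d}{dt}S(t)x=-S(t)Ax$ together with $\langle A'v',S(t)x\rangle=\langle v',AS(t)x\rangle$; the general case then follows from the density of $D(A)$ and the fact that $D(A')$ is exactly the weak-$*$ domain of the generator $-A'$ of the adjoint semigroup, which keeps the right-hand side continuous in $t$ and lets the identity persist for all $x\in E$.

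For (i)$\Rightarrow$(ii), I would pair \eqref{eq:w-fixed-point} with $v'\in D(A')$ to obtain
\[
\langle v',u(t)\rangle=\langle v',S(t)u_0\rangle+\int_0^t\langle v',S(t-s)f(s)\rangle\,ds,
\]
note $u(0)=u_0$, and differentiate. The first term contributes $-\langle A'v',S(t)u_0\rangle$ by the formula above; in the integral term the integrand and its $t$-derivative are jointly continuous on $\{0\le s\le t\}$, so the Leibniz rule produces the boundary term $\langle v',f(t)\rangle$ and $-\int_0^t\langle A'v',S(t-s)f(s)\rangle\,ds$. Collecting terms and recognising $\langle A'v',u(t)\rangle$ gives $\frac{d}{dt}\langle v',u(t)\rangle+\langle A'v',u(t)\rangle=\langle v',f(t)\rangle$; as this derivative equals $\langle v',f(t)\rangle-\langle A'v',u(t)\rangle$ with $u,f$ continuous, the map $t\mapsto\langle v',u(t)\rangle$ is $C^1$.

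The substance lies in (ii)$\Rightarrow$(i), which I would reduce to uniqueness. Let $w$ be the right-hand side of \eqref{eq:w-fixed-point}; it lies in $C([0,\infty),E)$ and, by the implication just proved, satisfies (ii) with the same $u_0$ and $f$. Hence $z:=u-w$ is continuous, $z(0)=0$, and $\frac{d}{dt}\langle v',z(t)\rangle=-\langle A'v',z(t)\rangle$ for all $v'\in D(A')$. Integrating in $t$ and pulling the bounded functional $A'v'$ out of the Riemann integral, I get $\langle A'v',Z(t)\rangle=-\langle v',z(t)\rangle$ for all $v'\in D(A')$, where $Z(t):=\int_0^t z(s)\,ds$.

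The key step — and the point where a possible lack of strong continuity of the dual semigroup could cause trouble — is to promote this scalar identity to $Z(t)\in D(A)$ with $AZ(t)=-z(t)$; integrating first makes the difficulty vanish. Since $A$ is closed and densely defined, a Hahn--Banach separation argument on the closed graph of $A$ shows that $\langle A'v',x\rangle=\langle v',y\rangle$ for all $v'\in D(A')$ forces $x\in D(A)$ and $Ax=y$; applied here, $Z(t)\in D(A)$ and $AZ(t)=-z(t)$. Thus $Z\in C^1$ solves $Z'=z=-AZ$ with $Z(0)=0$. Differentiating $s\mapsto S(t-s)Z(s)$ then gives $AS(t-s)Z(s)+S(t-s)Z'(s)=AS(t-s)Z(s)-S(t-s)AZ(s)=0$, so this map is constant on $[0,t]$; comparing $s=0$ and $s=t$ yields $Z(t)=S(t)Z(0)=0$. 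Hence $z=Z'\equiv0$, i.e. $u=w$, which is \eqref{eq:w-fixed-point}.
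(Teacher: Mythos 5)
Your proof is correct and takes essentially the same route as the paper: the same weak differentiation formula $\frac{d}{dt}\langle v',S(t)x\rangle=-\langle A'v',S(t)x\rangle$, the same computation for (i)$\Rightarrow$(ii), and for (ii)$\Rightarrow$(i) the same device of forming the difference $z$, integrating it to get $Z(t)\in D(A)$ with $AZ(t)=-z(t)$, and concluding $Z\equiv 0$. The only difference is that you make explicit two steps the paper leaves implicit, namely the Hahn--Banach/closed-graph fact used to promote the weak identity to $Z(t)\in D(A)$, and the uniqueness argument for $\dot Z=-AZ$, $Z(0)=0$ via differentiating $s\mapsto S(t-s)Z(s)$.
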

\begin{proof}
  As $-A'$ is the weak$^*$ generator of $S(t)'$, for $v'\in D(A')$ and $v\in E$ we have $\langle v',S(\cdot)v\rangle\in C^1([0,\infty))$. Moreover,
  \begin{equation}
    \label{eq:Sdual-weak-derivative}
    \frac{d}{dt}\langle v',S(t)v\rangle+\langle A'v',S(t)v\rangle=0
  \end{equation}
  for all $t\geq 0$, see \cite[Example~II.2.5]{engel:00:ops}

  (i) $\implies$ (ii): Let $v'\in D(A')$. Then by (i) we have
  \begin{equation*}
    \langle v',u(t)\rangle
    =\langle v',S(t)u_0\rangle+\int_0^t\langle v',S(t-s)f(s)\rangle\,ds.
  \end{equation*}
  Using \eqref{eq:Sdual-weak-derivative} we obtain
  \begin{align*}
    \frac{d}{dt}\langle v',u(t)\rangle
     & =-\langle A'v',S(t)u_0\rangle
    +\langle v',f(t)\rangle -\int_0^t\langle A'v',S(t-s)f(s)\rangle\,ds \\
     & =-\langle A'v',u(t)\rangle+\langle v',f(t)\rangle
  \end{align*}
  for all $t\geq 0$, proving (ii).

  (ii) $\implies$ (i): Assume that $u\in C([0,\infty),E)$ satisfies (ii) and set
  \begin{equation*}
    v(t):=S(t)u_0+\int_0^tS(t-s)f(s)\,ds.
  \end{equation*}
  Then $w:=u-v\in C([0,\infty),E)$, $w(0)=0$ and
  \begin{equation}
    \label{eq:w-derivative}
    \frac{d}{dt}\langle v',w(t)\rangle+\langle A'v',w(t)\rangle=0
  \end{equation}
  for all $v'\in D(A')$. Let
  \begin{equation*}
    W(t):=\int_0^tw(s)\,ds
  \end{equation*}
  for all $t\geq 0$. Then, by using \eqref{eq:w-derivative} we have
  \begin{equation*}
    \langle A'v',W(t)\rangle
    =\int_0^t\langle A'v',w(s)\rangle\,ds
    =-\int_0^t\frac{d}{ds}\langle v',w(s)\rangle\,ds
    =-\langle v',w(t)\rangle.
  \end{equation*}
  This implies that $W(t)\in D(A)$ and $AW(t)=-w(t)=\dot W(t)$ for all $t\geq 0$. Since $W(0)=0$, it follows that $W=0$ and hence $w=u-v=0$ as well. In particular $u=v$, proving (i).
\end{proof}
In what follows we consider the Fréchet space
\begin{equation*}
  Z:=L_{\loc}^1([0,\infty),E).
\end{equation*}
Convergence in $Z$ is defined by $u_n\to u$ in $Z$ if and only if
\begin{equation*}
  \lim_{n\to\infty}\int_0^T\|u_n(t)-u(t)\|_E\,dt= 0
\end{equation*}
for all $T>0$. We note that $Z$ is an ordered vector space with the closed cone
\begin{equation*}
  Z_+:=\{u\in Z\colon u(t)\geq 0\text{ a.~e.}\}.
\end{equation*}
Let $[\underline u,\overline u]$ be an order interval in $E$ and assume that $F\in C([\underline u,\overline u] ,E)$ is increasing, that is, $\underline u\leq u_1\leq u_2\leq\overline u$ implies $F(u_1)\leq F(u_2)$. We set
\begin{equation*}
  L_{\loc}^1([0,\infty),[\underline u,\overline u])
  :=\{u\in L_{\loc}^1([0,\infty),[\underline u,\overline u])\colon \underline u\leq u(t)\leq\overline u\text{ a.~e.}\}.
\end{equation*}
As $F$ is increasing we have that $F(v)\in[F(\underline u),F(\overline u)]$ for all $v\in[\underline u,\overline u]$ and since order intervals are bounded in $E$ it follows that
\begin{equation}
  \label{eq:image-bounded-C}
  C:=\sup\left\{\|F(v)\|\colon \underline u\leq v\leq\overline u\right\}<\infty.
\end{equation}
Thus, given $u\in L_{\loc}^1([0,\infty),E)$, the function $s\mapsto S(t-s)F(u(s))$ is bounded and measurable and hence Bochner integrable on $(0,t)$. Given any initial value $u_0\in[\underline u,\overline u]$ and $u\in L_{\loc}^1([0,\infty),[\underline u,\overline u])$ we can therefore define the \emph{fixed point map associated with $u_0$} by
\begin{equation}
  \label{eq:fixed-point-map-u0}
  G(u)(t):=S(t)u_0+\int_0^tS(t-s)F(u(s))\,ds
\end{equation}
for all $t\geq 0$. Looking at \eqref{eq:mild-solution} we see that $u\in C([0,\infty),E)$ is a mild solution of \eqref{eq:sle-abstract} if and only if it is a fixed point of $G$.  We need some properties of the map $G$.
\begin{lemma}
  \label{lem:Gu-continuous}
  Let $G$ be defined by \eqref{eq:fixed-point-map-u0}. Then
  \begin{equation*}
    G\colon L_{\loc}^1\left([0,\infty),[\underline u,\overline u]\right)\to C([0,\infty),E).
  \end{equation*}
  Moreover, if $F$ is increasing, then $G$ is increasing.
\end{lemma}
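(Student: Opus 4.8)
The plan is to treat the two claims separately, handling the continuity statement first and the monotonicity statement second. Write $f(s):=F(u(s))$; since $F$ is continuous and $u$ is strongly measurable with values in $[\underline u,\overline u]$, the composition $f$ is strongly measurable, and by \eqref{eq:image-bounded-C} it satisfies $\|f(s)\|\le C$ for almost every $s$. The term $t\mapsto S(t)u_0$ is continuous by strong continuity of the semigroup, so everything reduces to proving that the convolution $v(t):=\int_0^tS(t-s)f(s)\,ds$ is continuous on $[0,\infty)$.

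To this end I would fix $T>0$ and set $M:=\sup_{0\le r\le T}\|S(r)\|$, which is finite since a $C_0$-semigroup is bounded on compact time intervals. Right-continuity of $v$ is straightforward: for $0\le t\le t'\le T$ one splits $v(t')-v(t)$ into an integral over $(t,t')$, bounded in norm by $MC(t'-t)$, plus the term $[S(t'-t)-I]v(t)$, which tends to $0$ as $t'\downarrow t$ by strong continuity applied to the fixed vector $v(t)$. Left-continuity is the delicate point, because the analogous splitting produces a term $[S(t-t')-I]v(t')$ in which the vector $v(t')$ itself moves as $t'\uparrow t$, and the strong convergence $S(h)\to I$ is \emph{not} uniform on bounded sets. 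This is the main obstacle of the argument.

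To circumvent it I would argue by approximation. Choose continuous functions $f_n\colon[0,T]\to E$ with $\int_0^T\|f_n(s)-f(s)\|\,ds\to0$, and put $v_n(t):=\int_0^tS(t-s)f_n(s)\,ds$. After the substitution $r=t-s$ one has $v_n(t)=\int_0^T\mathbb 1_{[0,t]}(r)\,S(r)f_n(t-r)\,dr$, and since $f_n$ is continuous, $S$ is strongly continuous, and the integrand is dominated by $M\|f_n\|_\infty\mathbb 1_{[0,T]}$, dominated convergence yields that each $v_n$ is continuous on $[0,T]$. Moreover
\[
  \|v_n(t)-v(t)\|\le M\int_0^T\|f_n(s)-f(s)\|\,ds
\]
for every $t\in[0,T]$, so $v_n\to v$ uniformly on $[0,T]$ and hence $v$ is continuous there. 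As $T>0$ is arbitrary, $v\in C([0,\infty),E)$ and therefore $G(u)\in C([0,\infty),E)$.

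For the monotonicity, suppose $F$ is increasing and $u_1\le u_2$ in $Z$, so that $\underline u\le u_1(s)\le u_2(s)\le\overline u$ for almost every $s$. Then $F(u_1(s))\le F(u_2(s))$ a.e., and positivity of the semigroup gives $S(t-s)\bigl(F(u_2(s))-F(u_1(s))\bigr)\in E_+$ a.e. Since the terms $S(t)u_0$ cancel, the difference $G(u_2)(t)-G(u_1)(t)$ equals the Bochner integral of this non-negative integrand. To conclude that this integral lies in $E_+$ I would test against the dual cone: for $v'\in E_+'$ one has $\langle v',G(u_2)(t)-G(u_1)(t)\rangle=\int_0^t\langle v',S(t-s)(F(u_2(s))-F(u_1(s)))\rangle\,ds\ge0$, and since $E_+$ is closed and convex it coincides with $\{x\in E\colon\langle v',x\rangle\ge0\text{ for all }v'\in E_+'\}$ by the bipolar theorem. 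Hence $G(u_1)(t)\le G(u_2)(t)$ for all $t\ge0$, that is, $G$ is increasing.
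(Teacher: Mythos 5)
Your proof is correct, but the continuity argument takes a genuinely different route from the paper's. The paper fixes $t_n\downarrow t$ (resp.\ $t_n\uparrow t$), writes $G(u)(t_n)-G(u)(t)$ as an integral of $\bigl(S(t_n-s)-S(t-s)\bigr)F(u(s))$ over $[0,t]$ plus a short-interval remainder, and applies the vector-valued dominated convergence theorem directly: for each \emph{fixed} $s$ the vector $x=F(u(s))$ is fixed, so $S(t_n-s)x\to S(t-s)x$ by continuity of the orbit map $r\mapsto S(r)x$, while the integrand is dominated by the constant $2MC$. This shows that the obstacle you identify --- that $S(h)\to I$ only strongly, not uniformly on bounded sets --- is an artifact of your particular splitting $v(t)-v(t')=\int_{t'}^t S(t-s)f(s)\,ds+[S(t-t')-I]v(t')$, which pins the moving vector $v(t')$ against the operator difference; if instead the operator difference is kept under the integral sign against the fixed vectors $F(u(s))$, only pointwise-in-$s$ convergence is needed, and left- and right-continuity are handled identically. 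Your workaround --- approximating $f$ in $L^1([0,T],E)$ by continuous functions, proving continuity of each approximant $v_n$ by substitution and dominated convergence, and passing to the uniform limit --- is a correct and standard alternative (it is essentially the proof that the convolution of a strongly continuous operator family with an $L^1$ function is continuous); it costs an extra approximation layer, but buys a statement that does not even require the pointwise dominated-convergence step for the limit function $f$. For the monotonicity, the paper contents itself with a one-sentence assertion; your dual-cone argument is a valid expansion of it, and instead of invoking the bipolar theorem you could simply cite Proposition~\ref{prop:weak-positivity} of the appendix, which states exactly that $\langle v',x\rangle\geq 0$ for all $v'\in E_+'$ implies $x\geq 0$.
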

\begin{proof}
  As $(S(t))_{t\geq 0}$ is a positive semigroup and $F$ is increasing, it follows that $G$ is increasing. It remains to show the continuity of $G(u)$ for $u\in L^1([0,\infty),[\underline u,\overline u])$. Let $u\in L^1([0,\infty),[\underline u,\overline u])$. Let $t_n\downarrow t$ in $[0,\infty)$. Then
  \begin{align*}
    G(u)(t_n) & -G(u)(t)                              \\
              & =\int_0^t(S(t_n-s)-S(t-s))F(u(s))\,ds
    +\int_t^{t_n}S(t_n-s)F(u(s))\,ds
  \end{align*}
  If $t_n\uparrow t$, then
  \begin{align*}
    G(u)(t_n) & -G(u)(t)                                                         \\
              & =\int_0^{t}1_{[0,t_n]}(s)\left(S(t_n-s)-S(t-s)\right)F(u(s))\,ds
    +\int_{t_n}^tS(t-s)F(u(s))\,ds
  \end{align*}
  In either case, by \eqref{eq:image-bounded-C} the integrands are uniformly bounded with respect to $n\in\mathbb N$. Thus, the first integral converges to zero as $n\to\infty$ by the dominated convergence theorem; see \cite[Corollary~1.1.8]{arendt:11:vlt}. The second integral converges to zero as $n\to\infty$ by an obvious estimate.
\end{proof}

We now assume that $\underline u,\overline u\in E$ is a pair of weak sub- and super-solutions of \eqref{eq:stationary-equation} as given in Definition~\ref{def:sub-equilibria}. We now consider the order interval
\begin{equation}
  \label{eq:interval-w}
  \orderint{\underline u,\overline u}
  :=\left\{u\in L_{\loc}^1([0,\infty),E)\colon \underline u\leq u(t)\leq\overline u\text{ a.e.}\right\},
\end{equation}
in $L_{\loc}^1([0,\infty),E)$. In particular this means that we use the topology on $L_{\loc}^1([0,\infty),E)$ when looking at convergence in that order interval. This order interval turns out to be invariant under $G$. We will also need properties of the functions
\begin{equation}
  \label{eq:sub-solution-uw}
  \underline w(t)
  =S(t)\underline u+\int_0^tS(t-s)F(\underline u)\,ds
  =S(t)\underline u+\int_0^tS(s)F(\underline u)\,ds
\end{equation}
and
\begin{equation}
  \label{eq:sub-solution-ow}
  \overline w(t)=S(t)\overline u+\int_0^tS(s)F(\overline u)\,ds.
\end{equation}
The following lemma holds.
\begin{lemma}
  \label{lem:G-invariant}
  Let $\underline u,\overline u$ be a pair of weak sub- and super-solutions for \eqref{eq:stationary-equation} and let $F\in C([\underline u,\overline u],E)$ be increasing. Define $\underline w$ and $\overline w$ by \eqref{eq:sub-solution-uw} and \eqref{eq:sub-solution-ow}. Then $\underline w(0)=\underline u$ and $\underline w$ is increasing on $[0,\infty)$. Similarly,  $\overline w(0)=\overline u$ and $\overline w$ is decreasing on $[0,\infty)$. Furthermore, $G(\underline u)\geq\underline u$ and $G(\overline u)\leq\overline u$. Finally,
  \begin{equation*}
    G\colon \orderint{\underline u,\overline u}\to \orderint{\underline u,\overline u}
  \end{equation*}
  is continuous, where $\orderint{\underline u,\overline u}$ carries the topology of $L_{\loc}^1([0,\infty),E)$.
\end{lemma}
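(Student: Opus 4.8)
The plan is to reduce every assertion to the single key fact that the mild sub-solution stays above its starting point, namely $\underline w(t)\ge\underline u$ for all $t\ge 0$ (and dually $\overline w(t)\le\overline u$), and then to obtain monotonicity, invariance and continuity as consequences.

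First I would dispose of the trivial point $\underline w(0)=S(0)\underline u=\underline u$. The heart of the matter is to show $\underline w(t)-\underline u\in E_+$. Fixing a test functional $v'\in D(A')_+$ and using the weak differentiation rule \eqref{eq:Sdual-weak-derivative} together with the semigroup law $S(t+h-s)=S(t)S(h-s)$, I would rewrite
\begin{equation*}
  \langle v',\underline w(t)-\underline u\rangle
  =\int_0^t\bigl(\langle F(\underline u),S(s)'v'\rangle-\langle \underline u,A'S(s)'v'\rangle\bigr)\,ds .
\end{equation*}
The point is that for $v'\in D(A')_+$ the transported functional $S(s)'v'$ again lies in $D(A')_+$, because $S(s)'$ is positive (being the adjoint of a positive operator) and leaves $D(A')$ invariant with $A'S(s)'v'=S(s)'A'v'$. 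Hence the weak sub-solution inequality \eqref{eq:weak-s-equilibria}, applied with the test functional $S(s)'v'$, makes the integrand nonnegative for every $s$, so that $\langle v',\underline w(t)-\underline u\rangle\ge 0$ for all $v'\in D(A')_+$. To conclude $\underline w(t)-\underline u\in E_+$ I would invoke that $D(A')_+$ is weak$^*$ dense in $E_+'$ (because $\lambda(\lambda+A')^{-1}v'\in D(A')_+$ and $\lambda(\lambda+A')^{-1}v'\to v'$ weak$^*$ as $\lambda\to\infty$ for $v'\in E_+'$), combined with the fact that the closed cone $E_+$ coincides with the set of $x$ satisfying $\langle v',x\rangle\ge 0$ for all $v'\in E_+'$. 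The super-solution statements $\overline w(0)=\overline u$ and $\overline w(t)\le\overline u$ follow by the symmetric argument, and these two facts are exactly the asserted inequalities $G(\underline u)\ge\underline u$ and $G(\overline u)\le\overline u$ (with initial value $\underline u$, respectively $\overline u$).

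Monotonicity of $\underline w$ and $\overline w$ I would then read off from a cocycle identity. Splitting the integral defining $\underline w(t+h)$ and pulling $S(t)$ out of the relevant piece gives
\begin{equation*}
  \underline w(t+h)-\underline w(t)=S(t)\bigl(\underline w(h)-\underline u\bigr),
\end{equation*}
and since $\underline w(h)-\underline u\in E_+$ by the previous step and $S(t)$ is positive, the right-hand side is $\ge 0$; thus $\underline w$ is increasing, and dually $\overline w$ is decreasing. For the invariance of $\orderint{\underline u,\overline u}$ under $G$ I would use that $G$ is increasing (Lemma~\ref{lem:Gu-continuous}): if $\underline u\le u(s)\le\overline u$ a.e.\ and the fixed initial value satisfies $\underline u\le u_0\le\overline u$, then $F(\underline u)\le F(u(s))\le F(\overline u)$ and positivity of $S$ give $\underline w(t)\le G(u)(t)\le\overline w(t)$; combined with $\underline u\le\underline w(t)$ and $\overline w(t)\le\overline u$ this yields $\underline u\le G(u)(t)\le\overline u$.

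Finally, for continuity of $G$ in the $L^1_{\loc}$-topology I would estimate, for $u_n,u\in\orderint{\underline u,\overline u}$ and $T>0$,
\begin{equation*}
  \int_0^T\|G(u_n)(t)-G(u)(t)\|\,dt\le MT\int_0^T\|F(u_n(s))-F(u(s))\|\,ds,
\end{equation*}
where $M=\sup_{0\le r\le T}\|S(r)\|$, so it suffices to prove $\int_0^T\|F(u_n(s))-F(u(s))\|\,ds\to 0$ whenever $u_n\to u$ in $L^1_{\loc}$. Since the integrands are dominated by the constant $2C$ from \eqref{eq:image-bounded-C}, I would argue by the subsequence principle: any subsequence of $(u_n)$ has a further subsequence converging to $u$ a.e.\ on $[0,T]$, whence by continuity of $F$ and dominated convergence the corresponding integrals tend to $0$; as the limit is independent of the subsequence, the full sequence converges. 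I expect the main obstacle to be the crux step $\underline w(t)\ge\underline u$, since the weak sub-solution inequality only supplies test functionals in $D(A')_+$, and the work lies in propagating it through the semigroup via $S(s)'v'$ and then upgrading the resulting scalar inequalities to an order relation in $E$ through the weak$^*$ density of $D(A')_+$ in $E_+'$.
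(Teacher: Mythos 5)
Your proposal is correct, and it rests on exactly the two pillars of the paper's own proof: pushing the test functional through the adjoint semigroup (using that $S(s)'v'\in D(A')_+$ for $v'\in D(A')_+$) so that the weak sub-/super-solution inequality \eqref{eq:weak-s-equilibria} can be applied at each time $s$, and then upgrading the resulting scalar inequalities to an order relation in $E$ via the fact that $D(A')_+$ determines positivity --- which is precisely the paper's Corollary~\ref{cor:weak-positivity}, whose Yosida-approximation proof you reproduce rather than cite. The one genuine organizational difference is in the monotonicity of $\underline w$: the paper shows $\frac{d}{dt}\langle v',\underline w(t)\rangle\geq 0$ for all $v'\in D(A')_+$ and deduces both that $\underline w$ is increasing and, as a consequence, that $\underline w(t)\geq\underline w(0)=\underline u$; you instead prove $\underline w(t)\geq\underline u$ first, from the integrated identity, and then obtain monotonicity from the cocycle relation $\underline w(t+h)-\underline w(t)=S(t)\bigl(\underline w(h)-\underline u\bigr)$, which the paper never writes down. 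Both are valid; your route makes monotonicity a purely algebraic consequence of positivity of $S(t)$ once the increment at $t=0$ is controlled, at the price of verifying the cocycle identity, while the paper gets everything from one derivative computation. Your continuity argument is the paper's (subsequence principle, a.e.\ convergence, dominated convergence with the uniform bound \eqref{eq:image-bounded-C}), merely packaged into the single estimate $\int_0^T\|G(u_n)-G(u)\|\,dt\leq MT\int_0^T\|F(u_n)-F(u)\|\,ds$ instead of two successive applications of dominated convergence. One small point to tighten: the map $G$ in the lemma is attached to an arbitrary fixed $u_0\in[\underline u,\overline u]$, so $G(\underline u)\geq\underline u$ is not literally the statement $\underline w\geq\underline u$ (that is only the case $u_0=\underline u$, as your parenthetical suggests); however, your invariance paragraph, which uses $S(t)u_0\geq S(t)\underline u$ and $F(u(s))\geq F(\underline u)$, already yields $G(\underline u)(t)\geq\underline w(t)\geq\underline u$ for general $u_0$ --- exactly the paper's argument --- so nothing is actually missing.
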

\begin{proof}
  By definition $\underline w(0)=\underline u$. Let $v'\in D(A')_+$. By the second part of \eqref{eq:sub-solution-uw} and \eqref{eq:weak-s-equilibria}
  \begin{align*}
    \frac{d}{dt}\langle v',\underline w(t)\rangle
    &=\frac{d}{dt}\langle v',S(t)\underline u\rangle+\frac{d}{dt}\left\langle v',\int_0^tS(s)F(\underline u)\right\rangle\\
    &=\langle -A'v',S(t)\underline u\rangle+\langle v',S(t)F(\underline u)\rangle\\
    &=\langle -A'S(t)'v',\underline u\rangle+\langle S(t)'v',F(\underline u)\rangle
    \geq 0
  \end{align*}
  since $0\leq S(t)'v'\in D(A')$. As $D(A')_+$ determines positivity by Corollary~\ref{cor:weak-positivity} in the appendix it follows that $\underline w$ is increasing. A similar argument shows that $\overline w$ is decreasing with initial value $\overline u$. Using that $\underline w(t)\geq\underline u$ for all $t\geq 0$ by what we just proved, we see that
  \begin{align*}
    G(\underline u)(t) & =S(t)u_0+\int_0^tS(t-s)F(\underline u)\,ds              \\
                       & \geq S(t)\underline u+\int_0^tS(t-s)F(\underline u)\,ds
    =\underline w(t)\geq\underline u
  \end{align*}
  for all $t\geq 0$. A similar argument shows that $G(\overline u)\leq\overline u$. As $G$ is increasing $\orderint{\underline u,\overline u}$ is invariant under $G$.

  To prove the continuity let $u_n\in\orderint{\underline u,\overline u}$ with $u_n\to u$ in $L_{\loc}^1([0,\infty),E)$ as $n\to\infty$. Let $T>0$. We have to show that $G(u_n)\to G(u)$ in $L^1([0,T],E)$. There exists a sub-sequence that $u_{n_k}(t)\to u(t)$ in $E$ for almost every $t\in[0,T]$. As $F$ is continuous, $F(u_{n_k}(s))\to F(u(s))$ almost everywhere on $(0,T)$ as $k\to\infty$. There exists $M\geq 1$ such that $\|S(t)\|\leq M$ for all $t\in [0,T]$. As $G(u_n)\in\orderint{\underline u,\overline u}$ we also know that $F(\underline u)\leq F(u_{n_k}(s))\leq F(\overline u)$ for all $s\in[0,T]$. Thus, by the normality of the cone there exists $c\geq 0$ with $\|F(u_{n_k}(s)\|\leq c$ for all $s\in[0,T]$ and all $k\in\mathbb N$. The dominated convergence theorem implies that $G(u_{n_k})(t)\to G(u)(t)$ in $E$ for all $t\geq 0$. Applying the dominated convergence theorem again we see that $G(u_{n_k})\to G(u)$ in $L^1([0,T],E)$. Since each sub-sequence has a sub-sequence that converges to $G(u)$ we deduce that $G(u_n)\to G(u)$ in $L^1([0,T],E)$ as $n\to\infty$.
\end{proof}
The next lemma reflects the autonomous nature of the problem.
\begin{lemma}[Translation of mild solution]
  \label{lem:shift}
  Suppose that $u\in C([0,\infty),E)$ is a mild solution of \eqref{eq:sle-abstract}. Fix $t_0>0$ and define $v(t):=u(t_0+t)$ for all $t\geq 0$. Then $v\in C([0,\infty),E)$ is a mild solution of \eqref{eq:sle-abstract} with initial value $v(0)=u(t_0)$.
\end{lemma}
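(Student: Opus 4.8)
The plan is to verify the defining integral equation directly for $v$. I start from the mild-solution identity for $u$, namely
\begin{equation*}
  u(t_0+t)=S(t_0+t)u_0+\int_0^{t_0+t}S(t_0+t-s)F(u(s))\,ds,
\end{equation*}
and aim to rewrite the right-hand side in the form $S(t)v(0)+\int_0^tS(t-r)F(v(r))\,dr$ with $v(0)=u(t_0)$. The continuity of $v$ is immediate since $v$ is a translate of a continuous function.

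First I would split the integral at $s=t_0$, writing $\int_0^{t_0+t}=\int_0^{t_0}+\int_{t_0}^{t_0+t}$. For the first piece I use the semigroup property $S(t_0+t-s)=S(t)S(t_0-s)$ together with the fact that $S(t)$ may be pulled out of the Bochner integral (it is a bounded operator), giving
\begin{equation*}
  \int_0^{t_0}S(t_0+t-s)F(u(s))\,ds
  =S(t)\int_0^{t_0}S(t_0-s)F(u(s))\,ds.
\end{equation*}
Combining this with $S(t_0+t)u_0=S(t)S(t_0)u_0$ and factoring out $S(t)$, the two terms assemble to exactly $S(t)\,u(t_0)=S(t)v(0)$, by the mild-solution formula for $u$ evaluated at $t_0$.

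For the second piece I perform the change of variables $r=s-t_0$ (so $s=t_0+r$, $ds=dr$, and $s$ ranging over $[t_0,t_0+t]$ becomes $r\in[0,t]$). This turns $\int_{t_0}^{t_0+t}S(t_0+t-s)F(u(s))\,ds$ into $\int_0^{t}S(t-r)F(u(t_0+r))\,dr=\int_0^{t}S(t-r)F(v(r))\,dr$. Adding the two assembled pieces yields precisely
\begin{equation*}
  v(t)=S(t)v(0)+\int_0^t S(t-r)F(v(r))\,dr,
\end{equation*}
which is the mild-solution identity for $v$ with initial value $v(0)=u(t_0)$.

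There is no serious obstacle here; the statement is essentially a bookkeeping consequence of the semigroup law and a translation of the integration variable, reflecting the autonomy of the equation. The only points requiring a word of care are that $S(t)$ commutes past the Bochner integral (valid because $S(t)$ is a bounded linear operator and the integrand is Bochner integrable, as established before Lemma~\ref{lem:Gu-continuous}) and that $F(u(s))$ for $s\in[0,t_0]$ is integrable, so that the finite integral $\int_0^{t_0}S(t_0-s)F(u(s))\,ds$ makes sense and can be recognised as part of $u(t_0)$.
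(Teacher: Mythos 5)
Your proof is correct and follows exactly the paper's argument: split the integral at $s=t_0$, use the semigroup law to factor $S(t)$ out of the first piece (recognising $S(t)u(t_0)=S(t)v(0)$ via the mild-solution formula for $u$ at $t_0$), and translate the integration variable in the second piece. The added remarks on pulling the bounded operator $S(t)$ through the Bochner integral and on integrability of $F(u(\cdot))$ are fine points the paper leaves implicit, but the substance is the same.
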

\begin{proof}
  As $u$ is a mild solution of \eqref{eq:sle-abstract} we have that
  \begin{align*}
    v(t)&=u(t+t_0)=S(t_0+t)u_0+\int_0^{t+t_0}S(t_0+t-s)F(u(s))\,ds\\
        &=S(t)\left[S(t_0)u_0+\int_0^{t_0}S(t_0-s)F(u(s))\,ds\right]+\int_{t_0}^{t_0+t}S(t_0+t-s)F(u(s))\,ds\\
        &=S(t)v(0)+\int_0^tS(t-s)F(u(t_0+s))\,ds\\
        &=S(t)v(0)+\int_0^tS(t-s)F(v(s))\,ds
  \end{align*}
  for all $t>0$ and thus $v$ is a mild solution of \eqref{eq:sle-abstract} with initial condition $v(0)=u(t_0)$.
\end{proof}
We call the semigroup $(S(t))_{t\geq 0}$ \emph{compact} if $S(t)$ is a compact operator on $E$ for all $t>0$. This implies that $t\mapsto S(t)$, $(0,\infty)\to \mathcal L(E)$ is continuous with respect to the operator norm, see for instance \cite[Lemma~II.4.22]{engel:00:ops}. We will show that $G$ inherits that compactness.

A mapping $B\colon \orderint{\underline u,\overline u}\to Z:=L_{\loc}^1([0,\infty),E)$ is called \emph{compact} if for every sequence $(u_n)_{n\in\mathbb N}$ in $\orderint{\underline u,\overline u}$ there exists a sub-sequence $(u_{n_k})_{k\in\mathbb N}$ such that $\left(B(u_{n_k})\right)_{k\in\mathbb N}$ converges in $Z$ as $k\to\infty$.
\begin{lemma}[Compactness of the fixed point map]
  \label{lem:G-compact}
  Assume that $(S(t))_{t\geq 0}$ is compact. Let $u_0\in[\underline u,\overline u]$ and let $G\colon\orderint{\underline u,\overline u}\to\orderint{\underline u,\overline u}$ be the associated fixed point map given by \eqref{eq:fixed-point-map-u0}. Then $G$ is compact.
\end{lemma}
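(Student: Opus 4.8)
The plan is to apply the vector-valued Arzelà--Ascoli theorem on each interval $[0,T]$. Since $G$ maps into $C([0,\infty),E)$ by Lemma~\ref{lem:Gu-continuous}, given a sequence $(u_n)$ in $\orderint{\underline u,\overline u}$ it suffices to show that the restrictions $\{G(u_n)|_{[0,T]}\}$ form a relatively compact subset of $C([0,T],E)$ for every $T>0$; a diagonal argument over $T=1,2,\dots$ then yields a subsequence converging uniformly on compact intervals, hence in particular in $Z=L_{\loc}^1([0,\infty),E)$. The term $S(t)u_0$ is independent of $n$ and may be disregarded, so everything reduces to the convolution term $V_n(t):=\int_0^tS(t-s)F(u_n(s))\,ds$. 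Throughout I will use the uniform bound $\|F(u_n(s))\|\le C$ from \eqref{eq:image-bounded-C} and a constant $M\ge 1$ with $\|S(r)\|\le M$ for $r\in[0,T]$. To invoke Arzelà--Ascoli I must verify two things: that $\{V_n(t)\}_n$ is relatively compact in $E$ for each fixed $t$, and that $\{V_n\}_n$ is equicontinuous on $[0,T]$.

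For the pointwise relative compactness, fix $t\in(0,T]$ (the case $t=0$ is trivial) and $\epsilon\in(0,t)$, and split $V_n(t)=\int_0^{t-\epsilon}S(t-s)F(u_n(s))\,ds+\int_{t-\epsilon}^tS(t-s)F(u_n(s))\,ds$. The second integral has norm at most $MC\epsilon$. Using the semigroup law I factor the first integral as $S(\epsilon)\,y_n$ with $y_n:=\int_0^{t-\epsilon}S(t-\epsilon-s)F(u_n(s))\,ds$, and the sequence $(y_n)$ is bounded by $MCt$. Since $S(\epsilon)$ is compact, $\{S(\epsilon)y_n\}_n$ is relatively compact in $E$. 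Hence $\{V_n(t)\}_n$ lies within distance $MC\epsilon$ of a relatively compact set for every $\epsilon>0$, so it is totally bounded, and therefore relatively compact because $E$ is complete.

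The equicontinuity is the main obstacle, precisely because the kernel $S(t-s)$ loses both compactness and norm-continuity as $s\uparrow t$. For $0\le t\le t'\le T$ I write $V_n(t')-V_n(t)=\int_0^t\bigl(S(t'-s)-S(t-s)\bigr)F(u_n(s))\,ds+\int_t^{t'}S(t'-s)F(u_n(s))\,ds$, where the last integral is bounded by $MC(t'-t)$ uniformly in $n$. For the first integral I again introduce a cut-off $\epsilon$: the contribution of $s\in(t-\epsilon,t)$ is at most $2MC\epsilon$ since the integrand has norm at most $2MC$, while on $[0,t-\epsilon]$ both $t-s$ and $t'-s$ lie in the compact subinterval $[\epsilon,T]$ of $(0,\infty)$, on which $r\mapsto S(r)$ is uniformly norm-continuous because $(S(t))_{t\ge 0}$ is compact. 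Given $\eta>0$ I thus first fix $\epsilon$ with $2MC\epsilon<\eta/3$, and then $\delta>0$ so that $\|S(t'-s)-S(t-s)\|_{\mathcal L(E)}<\eta/(3CT)$ whenever $|t'-t|<\delta$; the uniform bound $\|F(u_n(s))\|\le C$ controls the near-singular piece independently of $n$, so all three pieces are below $\eta/3$ once $|t'-t|$ is small, giving uniform equicontinuity of $\{V_n\}_n$. With both hypotheses in hand, the vector-valued Arzelà--Ascoli theorem furnishes a subsequence of $\{G(u_n)\}$ converging uniformly on $[0,T]$, and the diagonal construction over $T\in\mathbb N$ completes the proof.
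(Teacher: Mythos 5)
Your proof is correct, and it in fact proves slightly more than the paper does. Both arguments run on the same engine: truncate the convolution near the singular point $s=t$, factor the remaining kernel as $S(t-s)=S(\epsilon)S(t-s-\epsilon)$ to exploit compactness of the semigroup, verify the hypotheses of the vector-valued Arzel\`a--Ascoli theorem on $[0,T]$, and diagonalise over $T\in\mathbb N$. The structural difference is where Arzel\`a--Ascoli is applied. The paper introduces a truncated operator $G_\delta$, shows that the family $\{G_\delta(u)\}$ is relatively compact in $C([0,T],E)$, and then transfers compactness to $\{G(u)|_{[0,T]}\}$ only in $L^1([0,T],E)$, via total boundedness and the uniform smallness of $G(u)-G_\delta(u)$; the compactness it obtains is thus in the $L^1_{\loc}$ topology. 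You instead apply Arzel\`a--Ascoli directly to the untruncated family, letting the $\epsilon$-truncation enter locally inside each of the two hypotheses: in the pointwise relative compactness through the splitting $V_n(t)=S(\epsilon)y_n+(\text{error of norm}\leq MC\epsilon)$, and in the equicontinuity through the uniform norm-continuity of $r\mapsto S(r)$ on $[\epsilon,T]$, which is exactly where compactness of the semigroup is used (the fact already quoted in Section~\ref{sec:fixed-point-map} from \cite[Lemma~II.4.22]{engel:00:ops}). What your route buys is relative compactness in $C([0,T],E)$, hence a subsequence of $(G(u_n))$ converging locally uniformly rather than merely in $L^1_{\loc}$; this is stronger than the lemma asserts, costs nothing extra, and is in the spirit of the locally uniform convergence obtained later in Proposition~\ref{prop:uniform-convergence}. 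What the paper's route buys is modularity: the Arzel\`a--Ascoli verification is done once for the explicitly factorable object $G_\delta$, followed by a one-line approximation, and $L^1_{\loc}$ compactness is all that the subsequent monotone-iteration arguments need. One further point in your favour: your truncation sits at the correct end of the integral. The paper's $G_\delta$ removes the piece $s\in[0,\delta]$ and then factorises $S(t-s)=S(\delta)S(t-s-\delta)$ under an integral over $s\in[\delta,t]$, which is not meaningful for $s>t-\delta$; the factorisation requires truncating near $s=t$, that is, integrating over $[0,t-\delta]$, exactly as you do. With that repair (which leaves the error estimate unchanged) the paper's argument also goes through.
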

\begin{proof}
  (I) We fix $T>0$ and define
  \begin{equation*}
    K:=\left\{G(u)|_{[0,T]}\colon u\in\orderint{\underline u,\overline u}\right\}.
  \end{equation*}
  we show that $K$ is relatively compact in $L^1([0,T],E)$. We do that in two parts. For fixed $\delta\in(0,T)$ we first consider
  \begin{equation*}
    G_\delta\colon\orderint{\underline u,\overline u}\to C([0,T],E)
  \end{equation*}
  given by
  \begin{equation*}
    G_\delta(u)(t):=
    \begin{cases}
      S(t)u_0+\int_\delta^tS(t-s)F(u(s))\,ds & \text{if }t\in(\delta,T] \\
      S(t)u_0                                & \text{if }t\in[0,\delta]
    \end{cases}
  \end{equation*}
  for all $u\in\orderint{\underline u,\overline u}$. We claim that
  \begin{equation*}
    K_\delta:=\left\{G_\delta(u)\colon u\in\orderint{\underline u,\overline u}\right\}
  \end{equation*}
  is compact in $C([0,T],E)$. For that we use the Arzelà-Ascoli theorem for vector valued functions and show that $G_\delta$ is pointwise relatively compact and equi-continuous; see \cite[Theorem~XII.6.4]{dugundji:66:top}. For the pointwise compactness we need to show that for any given $t\in[0,T]$ the set
  \begin{equation*}
    B_t:=\left\{G_\delta(t)\colon u\in\orderint{\underline u,\overline u}\right\}
  \end{equation*}
  is relatively compact in $E$. If $t\in[0,\delta]$, then $G_\delta(u)(t)=S(t)u_0$, so $B_t=\{S(t)u_0\}$ is compact. If $t\in(\delta,T]$, then
  \begin{equation*}
    G_\delta(u)(t)=S(\delta)\left(S(t-\delta)u_0+\int_\delta^tS(t-s-\delta)F(u(s))\,ds\right).
  \end{equation*}
  Since the set
  \begin{equation*}
    \left\{S(t-\delta)u_0+\int_\delta^tS(t-s-\delta)F(u(s))\,ds
    \colon u\in\orderint{\underline u,\overline u}\right\}\subseteq E
  \end{equation*}
  is bounded and $S(\delta)$ is a compact operator, it follows that $B_t$ is relatively compact as well. We now show that the set $K_\delta$ is equi-continuous at each $t_0\in[0,T]$. Let $t_0\in[0,\delta]$. As $G_\delta(u)=S(\cdot)u_0$ is continuous on the compact interval $[0,\delta]$ it follows that $K_\delta$ is equi-continuous at each $t_0\in[0,\delta)$ and equi-continuous from the left at $t_0=\delta$. To deal with $t_0\in[\delta,T]$ note that there exists $c>0$ such that $\|S(r)F(v)\|\leq c$ for all $v\in[\underline u,\overline u]$ and $r\in[0,T]$. Hence, if $t_0\in[\delta,T)$, $t\in[t_0,T]$ and $u\in\orderint{\underline u,\overline u}$, then
  \begin{align*}
    \|G_\delta(u)(t) & -G(u)(t_0)\|                                                    \\
                     & =\left\|\int_\delta^{t_0}\bigl(S(t-s)-S(t_0-s)\bigr)F(u(s))\,ds
    +\int_{t_0}^tS(t-s)F(u(s))\,ds\right\|                                             \\
                     & \leq Tc\sup_{s\in[\delta,T]}\|S(t-s)-S(t_0-s)\|+c|t-t_0|
  \end{align*}
  If $t_0\in(\delta,T]$, $t\in[\delta,t_0)$ and $u\in\orderint{\underline u,\overline u}$, then similarly
  \begin{align*}
    \|G_\delta(u)(t) & -G(u)(t_0)\|                                                  \\
                     & =\left\|\int_\delta^{t}\bigl(S(t-s)-S(t_0-s)\bigr)F(u(s))\,ds
    +\int_t^{t_0}S(t-s)F(u(s))\,ds\right\|                                           \\
                     & \leq Tc\sup_{s\in[\delta,T]}\|S(t-s)-S(t_0-s)\|+c|t-t_0|
  \end{align*}
  Since $S\colon[\delta,T]\to\mathcal L(E)$ is uniformly continuous, $K_\delta$ is equi-continuous from the right and from the left for every $t_0\in[\delta,T]$. Hence $K_\delta$ is equi-continuous for every $t_0\in[0,T]$, proving that $K_\delta$ is relatively compact in $C([0,T],E)$.

  We next show that $K$ is relatively compact in $L^1([0,T],E)$. For that it is sufficient to show that $K$ is totally bounded, that is, for each $\varepsilon>0$, the set $K$ can be covered by finitely many balls of radius $\varepsilon$. Fix $0<\varepsilon<T$. By the previous part of the proof there exists a family $(v_j){j=1,\dots,n}$ in $C([0,T],E)$ such that for every $u\in\orderint{\underline u,\overline u}$ there exists $j\in\{1,\dots,n\}$ such that
  \begin{equation*}
    \left\|G_\varepsilon(u)(t)-v_j(t)\right\|<\varepsilon
  \end{equation*}
  for all $t\in[0,T]$. Then
  \begin{align*}
    \int_0^T & \left\|G(u)(t)-v_j(t)\right\|\,dt                                    \\
             & \leq\int_0^T\left\|G(u)(t)-G_\varepsilon(u)(t)\right\|\,dt
    +\int_0^T\left\|G_\varepsilon(u)(t)-v_j(t)\right\|\,dt                          \\
             & \leq \int_0^T\left\|\int_0^\varepsilon S(t-s)F(u(s))\,ds\right\|\,dt
    +\int_0^T\left\|G_\varepsilon(u)(t)-v_j(t)\right\|\,dt                          \\
             & < cT\varepsilon+T\varepsilon.
  \end{align*}
  This shows that $K$ can be covered by finitely many balls of radius $T(c+1)\varepsilon$ and thus $K$ is relatively compact in $L^1([0,T],E)$ for every $T>0$.

  (II) Let now $(u_n)_{n\in\mathbb N}$ be a sequence in $\orderint{\underline u,\overline u}$. According to (I), for each $m\in\mathbb N$ the sequence $\left(G(u_n)\right)_{n\in\mathbb N}$ has a convergent sub-sequence in $L^1([0,m],E)$. By Cantor's diagonal argument we find a sub-sequence which converges in $L^1([0,m],E)$ for each $m\in\mathbb N$. This sub-sequence converges in $Z$ and thus $G$ is compact.
\end{proof}

\section{Monotone iterations and convergence}
\label{sec:monotone-iterations}
As before, let $E$ be an ordered Banach space with normal cone and let $-A$ be the generator of a positive $C_0$-semigroup $(S(t))_{t\geq 0}$ on $E$. Let $\underline u,\overline u$ be an ordered pair of weak sub- and super-solutions of \eqref{eq:stationary-equation} and let $F\in C([\underline u,\overline u],E)$ be increasing. Define $\underline w$ and $\overline w$ by \eqref{eq:sub-solution-uw} and \eqref{eq:sub-solution-ow}, respectively. Let $u_0\in[\underline u,\overline u]$. We have proved in Lemma~\ref{lem:G-invariant} that $\orderint{\underline u,\overline u}\subseteq L_{\loc}^1([0,\infty),E)$ is invariant under the fixed point map $G$ \eqref{eq:fixed-point-map-u0} associated with $u_0$ as given in \eqref{eq:fixed-point-map-u0}. Hence the following definition makes sense.
\begin{definition}[Upper/lower iteration sequences]
  \label{def:interated-sequences}
  Let $u_0\in[\underline u,\overline u]$ and let $G$ be the fixed point map associated with $u_0$ as given in \eqref{eq:fixed-point-map-u0}. Inductively define
  \begin{equation*}
    \underline w_0:=\underline u,\quad \underline w_{n+1}:=G(\underline w_n)
    \qquad\text{and}\qquad
    \overline w_0:=\overline u,\quad \overline w_{n+1}:=G(\overline w_n)
  \end{equation*}
  for all $n\in\mathbb N$. We call $(\underline w_n)_{n\in\mathbb N}$ the \emph{lower iteration sequence} and $(\overline w_n)_{n\in\mathbb N}$ the \emph{upper iteration sequence} associated with $u_0$.
\end{definition}
We next collect some properties of these iteration sequences.
\begin{proposition}[Monotone iterations]
  \label{prop:iterated-sequences}
  Let $\underline u,\overline u$ be an ordered pair of weak sub- and super-solutions of \eqref{eq:stationary-equation}. Fix $u_0\in[\underline u,\overline u]$ and let $G$ be the associated fixed point map given by \eqref{eq:fixed-point-map-u0}. Let $(\underline w_n)_{n\in\mathbb N}$ and $(\overline w_n)_{n\in\mathbb N}$ be the lower and upper iteration sequences associated with $u_0$ as in Definition~\ref{def:interated-sequences}. Then
  \begin{equation}
    \label{eq:iteration-comparison}
    \underline u\leq \underline w_n\leq\underline w_{n+1}\leq\overline w_{m+1}\leq\overline w_{m+1}\leq\overline u
  \end{equation}
  for all $n,m\geq 1$.
\end{proposition}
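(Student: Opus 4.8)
The plan is to derive the entire chain from two facts furnished by Lemma~\ref{lem:G-invariant}: that the fixed point map $G$ is increasing on $\orderint{\underline u,\overline u}$, and that $G(\underline u)\geq\underline u$ while $G(\overline u)\leq\overline u$. With these in hand the argument is purely order-theoretic and reduces to three short inductions; the only genuinely non-trivial point is the cross-comparison $\underline w_n\leq\overline w_m$ for \emph{unequal} indices.

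First I would show that each sequence is monotone. The base step $\underline w_0=\underline u\leq G(\underline u)=\underline w_1$ is precisely the inequality $G(\underline u)\geq\underline u$; applying the increasing map $G$ to $\underline w_n\leq\underline w_{n+1}$ gives $\underline w_{n+1}=G(\underline w_n)\leq G(\underline w_{n+1})=\underline w_{n+2}$, so $(\underline w_n)_{n\in\mathbb N}$ is increasing by induction. Symmetrically, $\overline w_1=G(\overline u)\leq\overline u=\overline w_0$ together with the monotonicity of $G$ shows that $(\overline w_m)_{m\in\mathbb N}$ is decreasing. In particular $\underline u=\underline w_0\leq\underline w_n$ and $\overline w_m\leq\overline w_0=\overline u$ for all $n,m$, which settles the two outer inequalities as well as the inner monotonicity $\underline w_n\leq\underline w_{n+1}$.

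It remains to compare the lower and upper sequences. I would first prove the diagonal inequality $\underline w_n\leq\overline w_n$ by induction: the base case $\underline w_0=\underline u\leq\overline u=\overline w_0$ is the assumption that $\underline u,\overline u$ is an ordered pair, and applying $G$ to $\underline w_n\leq\overline w_n$ yields $\underline w_{n+1}=G(\underline w_n)\leq G(\overline w_n)=\overline w_{n+1}$. To reach arbitrary indices I would then bridge through $k:=\max\{n,m\}$, using that $(\underline w_n)$ is increasing and $(\overline w_m)$ decreasing:
\[
  \underline w_n\leq\underline w_k\leq\overline w_k\leq\overline w_m.
\]
This gives $\underline w_n\leq\overline w_m$ for all $n,m$, and in particular the middle inequality $\underline w_{n+1}\leq\overline w_{m+1}$ of the displayed chain.

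Assembling the pieces yields $\underline u\leq\underline w_n\leq\underline w_{n+1}\leq\overline w_{m+1}\leq\overline w_m\leq\overline u$ for all $n,m\geq 1$, as claimed. The main obstacle is exactly the cross-comparison for unequal indices, and the $\max$-bridge above resolves it using only the two separately established monotonicities together with the diagonal estimate. I expect no analytic difficulties, since positivity of the semigroup and the monotonicity of $F$ have already been absorbed into the statement that $G$ is increasing.
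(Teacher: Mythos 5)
Your proof is correct; every step uses exactly the inputs available from Lemma~\ref{lem:G-invariant} (monotonicity of $G$, $G(\underline u)\geq\underline u$, $G(\overline u)\leq\overline u$) together with $\underline u\leq\overline u$, and the $\max$-bridge argument is sound. Where you differ from the paper is in the organisation of the cross-comparison $\underline w_n\leq\overline w_m$ for unequal indices: the paper handles it by a nested double induction, fixing $n$ and inducting on $m$ with the compound hypothesis $\overline w_m\geq\overline w_{m+1}\geq\underline w_{n+1}$, so that the decrease of the upper sequence and the lower bound by $\underline w_{n+1}$ are carried along simultaneously; you instead decompose the argument into three independent one-variable inductions (lower sequence increasing, upper sequence decreasing, diagonal inequality $\underline w_n\leq\overline w_n$) and then obtain the general comparison from
\begin{equation*}
  \underline w_n\leq\underline w_k\leq\overline w_k\leq\overline w_m,\qquad k:=\max\{n,m\}.
\end{equation*}
Your decomposition is arguably cleaner: each induction step is a one-line application of the monotonicity of $G$, and the bridging step isolates the only place where the two sequences interact, whereas the paper's intertwined induction hypothesis does double duty and is harder to read. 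The paper's version, on the other hand, delivers the inequality $\overline w_m\geq\underline w_{n+1}$ directly in the form in which it is used later, without introducing the auxiliary diagonal estimate. Note also that both your proof and the paper's actually establish the chain $\underline u\leq\underline w_n\leq\underline w_{n+1}\leq\overline w_{m+1}\leq\overline w_m\leq\overline u$, which is the evidently intended reading of \eqref{eq:iteration-comparison} (the displayed statement contains a typo, repeating $\overline w_{m+1}$ where $\overline w_m$ should appear).
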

\begin{proof}
  We first prove by induction that $(\underline w_n)_{n\in\mathbb N}$ is an increasing sequence bounded from above by $\overline w$. For the start of the induction note that by Lemma~\ref{lem:G-invariant} and the monotonicity of $G$ we have that
  \begin{equation*}
    \underline w_0=\underline u\leq G(\underline u)=\underline w_1\leq G(\overline u)\leq \overline u.
  \end{equation*}
  Assuming that $\underline w_n\leq\underline w_{n+1}\leq\overline u$ for some $n\geq 0$ we deduce from the monotonicity of $G$ that
  \begin{equation*}
    \underline w_{n+1}=G(\underline w_n)\leq G(\underline w_{n+1})=\underline w_{n+2}\leq G(\overline u)\leq\overline u
  \end{equation*}
  as claimed. Now fix $n\geq 0$. We prove by induction that $(\overline w_m)_{m\in\mathbb N}$ is a decreasing sequence bounded from below by $\underline w_{n+1}$. The start of the induction follows from Lemma~\ref{lem:G-invariant}, the monotonicity of $G$ and the fact that $\underline w_n\leq\overline w$ which imply that
  \begin{equation*}
    \overline w_0=\overline u\geq G(\overline u)=\overline w_1\geq G(\underline w_n)=\underline w_{n+1}.
  \end{equation*}
  For the induction step assume that $\overline w_m\geq\overline w_{m+1}\geq \underline w_{n+1}$. Then also $\overline w_{m+1}\geq \underline w_n$ and thus
  \begin{equation*}
    \overline w_{m+1}=G(\overline w_m)\geq G(\overline w_{m+1})=\overline w_{m+2}\geq G(\underline w_n)\geq\underline w_{n+1}
  \end{equation*}
  as claimed.
\end{proof}
We will show the convergence of the iterated sequences under two conditions. One is the compactness of the semigroup that implies the compactness of the fixed point map $G$ by Lemma~\ref{lem:G-compact}. The other is a condition on the ordered Banach space.

The key to convergence under the assumption of compactness is the following fact on monotone sequences.
\begin{lemma}[Convergence of montone sequences]
  Let $u_n,u\in L_{loc}^1([0,\infty),E)$ be such that $u_n\leq u_{n+1}$ for all $n\in\mathbb N$. If there exists a sub-sequence $(u_{n_k})_{k\in\mathbb N}$ converging to $u$, then $u_n\to u$ in $L_{\loc}^1([0,\infty),E)$ as $n\to\infty$.
\end{lemma}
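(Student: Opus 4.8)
The plan is to exploit monotonicity to squeeze the tail of $(u_n)$ between a single fixed subsequence term and the limit $u$, and then to convert the resulting order sandwich into a norm estimate by means of the normality of the cone. The two ingredients I would establish are that $u$ is an upper bound for the whole sequence, and that normality provides a uniform constant controlling norms under the order relation.

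First I would show that $u_n\le u$ in $Z:=L_{\loc}^1([0,\infty),E)$ for every $n\in\mathbb N$. Fix $n$. For every $k$ with $n_k\ge n$ the monotonicity assumption gives $u_n\le u_{n_k}$, that is $u_{n_k}-u_n\in Z_+$. Since $u_{n_k}\to u$ in $Z$ and the cone $Z_+$ is closed, letting $k\to\infty$ yields $u-u_n\in Z_+$, i.e.\ $u_n\le u$.

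Next I would invoke normality of $E_+$, which supplies a constant $M\ge 1$ such that $0\le x\le y$ in $E$ implies $\|x\|_E\le M\|y\|_E$; this is the quantitative form of the boundedness of order intervals. Now fix $T>0$ and $\varepsilon>0$. Since $u_{n_k}\to u$ in $L^1([0,T],E)$, I would choose $K$ so large that $\int_0^T\|u(t)-u_{n_K}(t)\|_E\,dt<\varepsilon$. For any $n\ge n_K$, monotonicity together with the previous step gives $u_{n_K}\le u_n\le u$, hence $0\le u(t)-u_n(t)\le u(t)-u_{n_K}(t)$ for almost every $t\in[0,T]$. Applying the normality estimate pointwise and integrating gives $\int_0^T\|u(t)-u_n(t)\|_E\,dt\le M\int_0^T\|u(t)-u_{n_K}(t)\|_E\,dt<M\varepsilon$ for all $n\ge n_K$. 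As $T$ and $\varepsilon$ were arbitrary, this is precisely the convergence $u_n\to u$ in $Z$.

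The argument is short, and the only genuinely load-bearing step is the passage from the order sandwich $0\le u-u_n\le u-u_{n_K}$ to the norm bound; this is exactly where normality of the cone is indispensable, and where an order-theoretic hypothesis, rather than merely the convergent subsequence, is unavoidable. The one point I would be careful about is that the normality constant $M$ is uniform and independent of $t$, so that the pointwise inequality integrates cleanly over $[0,T]$; everything else is bookkeeping with the indices.
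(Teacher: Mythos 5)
Your proof is correct and follows essentially the same route as the paper: pick a subsequence term $u_{n_K}$ close to $u$, use monotonicity to get the order sandwich $0\le u-u_n\le u-u_{n_K}$ for $n\ge n_K$, and convert this into a norm estimate via normality of the cone (the paper uses the equivalent monotone norm $\tnorm{\cdot}$ from its appendix lemma, while you use the equivalent quantitative constant $M$; these are interchangeable). Your explicit verification that $u_n\le u$ via closedness of $Z_+$ is a step the paper leaves implicit, and including it is a small improvement rather than a deviation.
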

\begin{proof}
  Since the positive cone on $E$ is normal there exists an equivalent norm $\tnorm{{\cdot}}$ on $E$ which is monotone, that is, $0\leq v_1\leq v_2$ implies $\tnorm{v_1}\leq\tnorm{v_2}$; see Lemma~\ref{lem:normal-cone} in the Appendix. Let $T>0$ and fix $\varepsilon>0$. There exists $n_0$ such that
  \begin{equation*}
    \int_0^T\tnorm{u(t)-u_{n_0}(t)}\,dt<\varepsilon.
  \end{equation*}
  Then, $0\leq u(t)-u_n(t)\leq u(t)-u_{n_0}(t)$ for all $t\in[0,T]$ and all $n\geq n_0$. By the monotonicity of the norm
  \begin{equation*}
    \int_0^T\tnorm{u(t)-u_{n}(t)}\,dt
    \leq \int_0^T\tnorm{u(t)-u_{n_0}(t)}\,dt<\varepsilon
  \end{equation*}
  for all $n\geq n_0$. Hence $u_n\to u$ in $L^1([0,T],E)$ for all $T>0$ as claimed.
\end{proof}

We next turn to the second condition of interest, which is a condition on the ordered Banach space $E$. We call a sequence $(v_n)_{n\in\mathbb N}$ in $E$ \emph{order bounded from above} if there exists $v\in E$ such that $v_n\leq v$ for all $n\in\mathbb N$. The sequence is called \emph{order bounded from below} if there exists $v\in E$ with $v\leq v_n$ for all $n\in\mathbb N$. We call the sequence \emph{order bounded} if it is order bounded from above and from below. We are interested in the convergence of order bounded monotone sequences.
\begin{definition}[order continuous norm]
  The ordered Banach space $E$ is said to have \emph{order continuous norm} if each order bounded increasing sequence converges in $E$.
\end{definition}
We note that if $E$ has order continuous norm, then also each decreasing sequence converges in $E$ if it is order bounded from below.
\begin{remark}
  (a) It is not difficult to show that the order continuity of the norm in an ordered Banach space implies that the positive cone is normal.

  (b) If $\emptyset\neq\Omega\subseteq\mathbb R^N$ is open, then $E=L^p(\Omega)$ has order continuous norm if $1\leq p<\infty$, but $L^\infty(\Omega)$ does not. Neither does $C(\bar\Omega)$ have order continuous norm if $\Omega$ is bounded.
\end{remark}
We need the following lemma on the convergence of monotone sequences in $\orderint{\underline u,\overline u}$ given that $E$ has order continuous norm.
\begin{lemma}
  \label{lem:monotone-norm-convergence}
  Let $(u_n)_{n\in\mathbb N}$ be a sequence in $\orderint{\underline u,\overline u}\subseteq L^1([0,\infty),E)$ with $u_n\leq u_{n+1}$ for all $n\in\mathbb N$. If $E$ has order continuous norm, then $(u_n)$ converges in $L^1([0,\infty),E)$.
\end{lemma}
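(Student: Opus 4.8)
The plan is to produce the limit pointwise in $t$ by exploiting the order continuity of the norm on $E$, and then to upgrade the pointwise convergence to convergence in $L^1([0,\infty),E)$ by dominated convergence, the decisive point being the choice of an \emph{integrable} dominating function on the whole half-line. First I would fix a common null set: for each $n$ the relations $u_n(t)\le u_{n+1}(t)$ and $\underline u\le u_n(t)\le\overline u$ hold for a.e.\ $t$, so outside a single null set $N\subseteq[0,\infty)$ we have $\underline u\le u_n(t)\le u_{n+1}(t)\le\overline u$ for all $n\in\mathbb N$. For $t\notin N$ the sequence $(u_n(t))_{n\in\mathbb N}$ is thus increasing and order bounded above by $\overline u$, so by order continuity of the norm it converges in $E$; call its limit $u(t)$. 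As an a.e.\ pointwise limit of strongly measurable functions $u$ is strongly measurable, and since each $u_n(t)$ lies in the closed order interval $[\underline u,\overline u]$ so does $u(t)$ for a.e.\ $t$. Hence $u\in\orderint{\underline u,\overline u}$, and by the hypothesis $\orderint{\underline u,\overline u}\subseteq L^1([0,\infty),E)$ both $u$ and $u_1$ belong to $L^1([0,\infty),E)$.

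Next I would pass to an equivalent monotone norm $\tnorm{\,\cdot\,}$ on $E$, which exists by normality of the cone; see Lemma~\ref{lem:normal-cone} in the Appendix. Set $\varphi_n(t):=\tnorm{u(t)-u_n(t)}$. For $t\notin N$ we have $0\le u(t)-u_{n+1}(t)\le u(t)-u_n(t)$, so monotonicity of $\tnorm{\,\cdot\,}$ yields $0\le\varphi_{n+1}(t)\le\varphi_n(t)\le\varphi_1(t)$; moreover $\varphi_n(t)\to 0$ because $u_n(t)\to u(t)$ in $E$ and $\tnorm{\,\cdot\,}$ is equivalent to $\|\,\cdot\,\|$. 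The dominating function $\varphi_1=\tnorm{u-u_1}$ is integrable on the full half-line: indeed $u-u_1\in L^1([0,\infty),E)$ since both $u$ and $u_1$ lie in $L^1([0,\infty),E)$, and equivalence of the two norms makes $t\mapsto\varphi_1(t)$ an element of $L^1([0,\infty))$.

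Finally I would apply the dominated convergence theorem on $[0,\infty)$ to the scalar functions $\varphi_n\ge 0$, with integrable dominant $\varphi_1$ and pointwise a.e.\ limit $0$, to obtain $\int_0^\infty\varphi_n(t)\,dt\to 0$; equivalence of $\tnorm{\,\cdot\,}$ and $\|\,\cdot\,\|$ then gives $\int_0^\infty\|u(t)-u_n(t)\|_E\,dt\to 0$, that is, $u_n\to u$ in $L^1([0,\infty),E)$. The main obstacle—and the reason the crude estimate $\tnorm{u(t)-u_n(t)}\le\tnorm{\overline u-\underline u}$ is insufficient—is precisely the integrability of the dominating function over the \emph{entire} half-line: the constant bound $\tnorm{\overline u-\underline u}$ is integrable only on finite intervals and would deliver no more than convergence in $L^1_{\loc}([0,\infty),E)$. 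Dominating instead by the genuine difference $\tnorm{u-u_1}$, whose integrability over $[0,\infty)$ is furnished by the hypothesis $\orderint{\underline u,\overline u}\subseteq L^1([0,\infty),E)$, is what secures the stated conclusion.
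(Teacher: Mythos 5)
Your argument is internally rigorous --- indeed more careful than the paper's own proof on one point, since you fix a common null set and verify measurability of the limit, whereas the paper loosely asserts that $u(t):=\lim u_n(t)$ exists ``for every $t\geq 0$''. But it proves the wrong statement, because you have read the inclusion ``$\orderint{\underline u,\overline u}\subseteq L^1([0,\infty),E)$'' as a substantive hypothesis. By the definition \eqref{eq:interval-w}, the order interval $\orderint{\underline u,\overline u}$ contains every constant function with value in $[\underline u,\overline u]$, and a nonzero constant is never Bochner integrable over the whole half-line; so the inclusion you lean on can only hold when $\underline u=\overline u=0$, which makes your version of the lemma vacuous. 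Worse, under your reading the lemma would be inapplicable exactly where the paper uses it: in Theorem~\ref{thm:convergence-iteration} it is applied to the iteration sequences of Definition~\ref{def:interated-sequences}, whose first term $\underline w_0=\underline u$ is a (generally nonzero) constant function and hence never lies in $L^1([0,\infty),E)$. The occurrences of ``$L^1([0,\infty),E)$'' in the statement are loose notation for the Fr\'echet space $Z=L_{\loc}^1([0,\infty),E)$ introduced in Section~\ref{sec:fixed-point-map}: the intended conclusion, and the one actually invoked later, is convergence in $L^1([0,T],E)$ for every $T>0$, just as in the unlabeled lemma on convergence of monotone sequences that precedes this one.

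Once the statement is read this way, the step you explicitly dismissed as insufficient is precisely the paper's proof. After obtaining $u_n(t)\to u(t)$ a.e.\ from order continuity, normality of the cone gives a constant $c\geq 0$ with $\|v\|\leq c$ for all $v\in[\underline u,\overline u]$, hence $\|u_n(t)-u(t)\|\leq 2c$ for all $n$ and a.e.\ $t$; on each finite interval $[0,T]$ this constant \emph{is} an integrable dominant, and scalar dominated convergence yields $\int_0^T\|u_n(t)-u(t)\|\,dt\to 0$ for every $T>0$, which is the claimed convergence in $Z$. Neither the equivalent monotone norm $\tnorm{{\cdot}}$ of Lemma~\ref{lem:normal-cone} nor a globally integrable dominant is needed here (the monotone norm is the key tool in the neighbouring subsequence lemma, not in this one). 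Your construction of the dominant $\tnorm{u-u_1}$ is sound mathematics, but it manufactures a conclusion --- genuine $L^1$ convergence on all of $[0,\infty)$ --- that neither holds in, nor is required for, any of the situations where the lemma is applied. The fix is interpretive, not technical: drop the global-integrability hypothesis, state the conclusion in $L_{\loc}^1([0,\infty),E)$, and keep your null-set and measurability bookkeeping, which genuinely tightens the paper's first line.
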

\begin{proof}
  By assumption $u(t):=\lim u_n(t)$ exists for every $t\geq 0$. Since the cone is normal there exists $c\geq 0$ with $\|v\|\leq c$ for all $v\in[\underline u,\overline u]$. Thus
  \begin{equation*}
    \|u_n(t)-u(t)\|\leq 2c
  \end{equation*}
  for all $n\in\mathbb N$ and $t\geq 0$. It follows from the dominated convergence theorem that
  \begin{equation*}
    \lim_{n\to\infty}\int_0^T\|u_n(t)-u(t)\|\,dt=0
  \end{equation*}
  for all $T>0$. Hence $u_n\to u$ in $L^1([0,\infty),E)$.
\end{proof}

\section{Existence and comparison of mild solutions}
\label{sec:existence}
In this section we prove the bulk of claims in the main Theorem~\ref{thm:existence}. using the facts established in the previous section. We work under the assumptions of that theorem. We let $E$ be an ordered Banach space with normal cone and let $-A$ be the generator of a positive $C_0$-semigroup $(S(t))_{t\geq 0}$ on $E$. We assume that $\underline u,\overline u\in E$ is a pair of ordered sub- and super-solutions of \eqref{eq:stationary-equation}. Assume that $F\in C([\underline u,\overline u],E)$ is quasi-increasing and that $\underline u,\overline u\in E$ are a pair of ordered sub- and super-solutions of \eqref{eq:stationary-equation}.  By a solution to \eqref{eq:sle-abstract} we always mean a mild solution, that is, $u\in C([0,\infty),E)$ satisfying \eqref{eq:mild-solution}.

Before we start the proof of Theorem~\ref{thm:existence} we give a scaling argument that allows us to assume without loss of generality that $F$ is increasing.
\begin{lemma}[Scaling]
  \label{lem:sle-abstract-scaled}
  Let $\mu\in\mathbb R$ and consider the operator $A_\mu:=A+\mu I$ and the function $F_\mu\colon[\underline u,\overline u]\to E$ given by $F_\mu(v)=F(v)+\mu v$. Then $A_\mu\underline u\leq F_\mu(\underline u)$ and $A_\mu\overline u\geq F(\overline u)$ weakly. Moreover, for $u_0\in[\underline u,\overline u]$, a function $u\in C([0,\infty),E)$ is a solution of \eqref{eq:sle-abstract} if and only it is a solution of
  \begin{equation}
    \label{eq:sle-abstract-scaled}
    \begin{aligned}
      \dot u(t)+A_\mu u(t) & =F_\mu(u(t))\qquad \text{for $t>0$,} \\
      u(0)                 & =u_0.
    \end{aligned}
  \end{equation}
\end{lemma}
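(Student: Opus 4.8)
The plan is to prove this scaling lemma in two parts, first verifying that the shifted objects form an ordered pair of weak sub- and super-solutions for the shifted stationary problem $A_\mu v = F_\mu(v)$, and then showing the equivalence of mild solutions. For the first part, I would simply add $\mu\langle \underline u, v'\rangle$ to both sides of the weak sub-solution inequality in \eqref{eq:weak-s-equilibria}. Since $A_\mu' = A' + \mu I$ on $D(A')$, we have $\langle \underline u, A_\mu' v'\rangle = \langle \underline u, A' v'\rangle + \mu\langle \underline u, v'\rangle \leq \langle F(\underline u), v'\rangle + \mu\langle \underline u, v'\rangle = \langle F_\mu(\underline u), v'\rangle$ for all $v'\in D(A')_+$, using that $\langle \underline u, v'\rangle$ term cancels correctly because $F_\mu(\underline u) = F(\underline u) + \mu\underline u$. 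The super-solution inequality is symmetric. This is routine.

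The substance is the equivalence of mild solutions, and I would reduce it to the characterisation in Proposition~\ref{prop:weak-solution-characterisation}. A function $u\in C([0,\infty),E)$ with $u(0)=u_0$ is a mild solution of \eqref{eq:sle-abstract} if and only if, setting $f(t):=F(u(t))$, the map $t\mapsto\langle v',u(t)\rangle$ is $C^1$ with
\begin{equation*}
  \frac{d}{dt}\langle v',u(t)\rangle + \langle A'v',u(t)\rangle = \langle v',F(u(t))\rangle
\end{equation*}
for all $v'\in D(A')$. The analogous characterisation for \eqref{eq:sle-abstract-scaled} uses $A_\mu$ and $F_\mu$ in place of $A$ and $F$. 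The whole point is that the two weak formulations are literally the same equation: since $A_\mu' v' = A'v' + \mu v'$ and $F_\mu(u(t)) = F(u(t)) + \mu u(t)$, the extra term $\mu\langle v',u(t)\rangle$ appears on both the left-hand side (from $\langle A_\mu'v',u(t)\rangle$) and the right-hand side (from $\langle v',F_\mu(u(t))\rangle$) and cancels. Thus $u$ satisfies the weak formulation for the original problem if and only if it satisfies it for the scaled problem. Invoking Proposition~\ref{prop:weak-solution-characterisation} in both directions then gives the equivalence of mild solutions.

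I expect the main obstacle, such as it is, to be bookkeeping around the generator: one must confirm that $-A_\mu = -A - \mu I$ generates the rescaled semigroup $S_\mu(t) = e^{-\mu t}S(t)$, that this semigroup is again positive (clear, since $e^{-\mu t}\geq 0$), and that its dual generator is $-A_\mu' = -A' - \mu I$ with $D(A_\mu') = D(A')$, so that $D(A_\mu')_+ = D(A')_+$ and the class of test functionals in Definition~\ref{def:sub-equilibria} is unchanged under the shift. These are standard bounded-perturbation facts about $C_0$-semigroups, and once they are in place both parts of the lemma follow by the elementary cancellation described above. The cleanest exposition routes everything through Proposition~\ref{prop:weak-solution-characterisation} rather than manipulating the integral equation \eqref{eq:mild-solution} directly, since the variation-of-constants formula for $S_\mu$ would otherwise require an explicit computation relating $\int_0^t S_\mu(t-s)F_\mu(u(s))\,ds$ to $\int_0^t S(t-s)F(u(s))\,ds$.
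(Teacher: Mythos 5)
Your proposal is correct and takes essentially the same route as the paper's proof: both parts rest on noting $D(A_\mu')=D(A')$ and $A_\mu'=A'+\mu I$ so that the $\mu$-terms cancel, and the equivalence of mild solutions is obtained by applying Proposition~\ref{prop:weak-solution-characterisation} in both directions rather than manipulating the variation-of-constants formula. The bounded-perturbation bookkeeping you flag (that $-A_\mu$ generates $e^{-\mu t}S(t)$, which is again positive, with dual generator $-A'-\mu I$) is exactly what the paper leaves implicit as standard facts.
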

\begin{proof}
  Note that $D(A_\mu')=D(A')$ and that $A_\mu'=A'+\mu I$. Let $v'\in D(A')_+$. Then
  \begin{align*}
    \langle A_\mu'v',\underline u\rangle
     & =\langle A'v',\underline u\rangle + \langle\mu v',\underline u\rangle     \\
     & \leq\langle v',F(\underline u)\rangle + \langle v',\mu\underline u\rangle
    =\langle v',F_\mu(\underline u)\rangle,
  \end{align*}
  and similarly for $\overline u$. This proves the first claim. Regarding the second claim let $u$ be a solution of \eqref{eq:sle-abstract}. Then by Proposition~\ref{prop:weak-solution-characterisation} we have
  \begin{align*}
    \frac{d}{dt}\langle v',u(t)\rangle+\langle A_\mu'v',u(t)\rangle
     & =\frac{d}{dt}\langle v',u(t)\rangle+\langle A'v',u(t)\rangle+\mu\langle v',u(t)\rangle \\
     & =\langle v'F(u(t))\rangle+\mu\langle v',u(t)\rangle
    =\langle v'F_\mu(u(t))\rangle.                                                            \\
  \end{align*}
  Again using Proposition~\ref{prop:weak-solution-characterisation} it follows that $u$ is a solution of \eqref{eq:sle-abstract-scaled}. The other implication is shown similarly.
\end{proof}
Lemma~\ref{lem:sle-abstract-scaled} shows that by replacing $A$ by $A_\mu$ and $F$ by $F_\mu$ we can assume without loss of generality that $F$ is increasing in Theorem~\ref{thm:existence}. Hence we assume throughout that $F$ is increasing. In this section we prove parts (i)--(iii) of Theorem~\ref{thm:existence}.
\begin{theorem}
  \label{thm:convergence-iteration}
  Suppose that the semigroup $(S(t))_{t\geq 0}$ is compact, or that $E$ has order continuous norm. Let $u_0\in[\underline u,\overline u]$ and let $G$ be the fixed point map \eqref{eq:fixed-point-map-u0} associated with $u_0$. Let $(\underline w_n)_{n\in\mathbb N}$ and $(\overline w_n)_{n\in\mathbb N}$ be the iterated sequence from Definition~\ref{def:interated-sequences}. Then these sequences converge in $L^1([0,T],E)$ for every $T>0$. Their limits $u_{\min}$ and $u_{\max}$ are solutions of \eqref{eq:sle-abstract} and assertions (i)--(iii) of Theorem~\ref{thm:existence} hold.
\end{theorem}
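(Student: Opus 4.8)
The plan is to prove convergence of the two iteration sequences first, then to identify their limits as mild solutions, and finally to deduce the comparison assertions (i)--(iii) by monotonicity. By Proposition~\ref{prop:iterated-sequences} the lower sequence $(\underline w_n)$ is increasing and the upper sequence $(\overline w_n)$ is decreasing, both inside the order interval $\orderint{\underline u,\overline u}$, which is closed in $L_{\loc}^1([0,\infty),E)$ because the cone $Z_+$ is closed. I would split the convergence argument according to the two hypotheses. If $E$ has order continuous norm, then Lemma~\ref{lem:monotone-norm-convergence} applies directly to $(\underline w_n)$ and, by its evident analogue for decreasing sequences (equivalently, by reflecting via $v\mapsto -v$), also to $(\overline w_n)$, giving convergence in $L^1([0,T],E)$ for every $T>0$. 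If instead the semigroup is compact, then $G$ is compact by Lemma~\ref{lem:G-compact}, so the sequence $(\underline w_{n+1})=(G(\underline w_n))$ has a subsequence converging in $L_{\loc}^1$; since $(\underline w_n)$ is increasing, the lemma on convergence of monotone sequences upgrades this to convergence of the whole sequence, and the same reasoning applied to $(-\overline w_n)$ handles $(\overline w_n)$. In either case I obtain limits $u_{\min}:=\lim_n\underline w_n$ and $u_{\max}:=\lim_n\overline w_n$ in $L_{\loc}^1$, both lying in the closed set $\orderint{\underline u,\overline u}$, so that $G$ may be applied to them.

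Next I would pass to the limit in the fixed point equation. Since $G\colon\orderint{\underline u,\overline u}\to\orderint{\underline u,\overline u}$ is continuous for the $L_{\loc}^1$ topology by Lemma~\ref{lem:G-invariant}, and $\underline w_n\to u_{\min}$, we obtain
\[
  G(u_{\min})=\lim_n G(\underline w_n)=\lim_n\underline w_{n+1}=u_{\min},
\]
so $u_{\min}$ is a fixed point of $G$; likewise $G(u_{\max})=u_{\max}$. By Lemma~\ref{lem:Gu-continuous} the image $G(u_{\min})$ lies in $C([0,\infty),E)$, so the fixed point $u_{\min}$ has a continuous representative, and evaluating $G(u_{\min})$ at $t=0$ gives $u_{\min}(0)=u_0$. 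Hence $u_{\min}$, and similarly $u_{\max}$, is a mild solution of~\eqref{eq:sle-abstract} with initial value $u_0$.

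For assertion~(i) I would use that every mild solution $u\colon[0,\infty)\to[\underline u,\overline u]$ with $u(0)=u_0$ is precisely a fixed point of $G$ lying in $\orderint{\underline u,\overline u}$. Starting from $\underline w_0=\underline u\leq u$ and $\overline w_0=\overline u\geq u$ and using that $G$ is increasing (Lemma~\ref{lem:G-invariant}), an induction gives $\underline w_n\leq u\leq\overline w_n$ for all $n$; letting $n\to\infty$ and using the closedness of the cone of $L_{\loc}^1$ yields $u_{\min}\leq u\leq u_{\max}$, which is~(i). For~(ii), writing $G$ and $\tilde G$ for the fixed point maps associated with $u_0\leq\tilde u_0$, the positivity of $S(t)$ gives $\tilde G(v)-G(v)=S(\cdot)(\tilde u_0-u_0)\geq 0$, hence $G(v)\leq\tilde G(v)$ for every $v$; an induction comparing the two lower (respectively upper) iteration sequences, started from the common values $\underline u$ and $\overline u$, then yields $\underline w_n\leq\tilde{\underline w}_n$ and $\overline w_n\leq\tilde{\overline w}_n$, and passing to the limit gives $u_{\min}\leq\tilde u_{\min}$ and $u_{\max}\leq\tilde u_{\max}$. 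Finally,~(iii) follows by combining~(i) and~(ii): for any $u_0\in[\underline u,\overline u]$ and any mild solution $u$ with $u(0)=u_0$, assertion~(i) gives $u_{\min}\leq u\leq u_{\max}$ for the solutions associated with $u_0$, while assertion~(ii) applied to $\underline u\leq u_0\leq\overline u$ gives $U_{\min}\leq u_{\min}$ and $u_{\max}\leq U_{\max}$.

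I expect the main obstacle to be the first paragraph: organising the convergence proof so that the two quite different hypotheses, compactness of the semigroup and order continuity of the norm, are handled uniformly, and making sure that both the monotonicity and the order bounds genuinely survive the passage to the $L_{\loc}^1$ limit. Everything afterwards rests on the monotonicity and positivity of $G$ already established together with the closedness of the cone, so the comparison statements~(i)--(iii) reduce to routine inductions and limits.
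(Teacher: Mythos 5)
Your proposal is correct and follows essentially the same route as the paper's proof: monotone iteration plus the dichotomy (order continuous norm via Lemma~\ref{lem:monotone-norm-convergence}, or compactness of $G$ via Lemma~\ref{lem:G-compact} upgraded by the monotone-subsequence lemma), identification of the limits as fixed points via the continuity of $G$, and inductive comparison arguments for (i)--(iii). The only (harmless) deviation is that you obtain (iii) by combining (i) and (ii), whereas the paper reruns the induction of (i) with the fixed point maps associated with $\underline u$ and $\overline u$; both are equally valid.
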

\begin{proof}
  We know from Proposition~\ref{prop:iterated-sequences} that $\underline w_n\leq\underline w_{n+1}\leq\underline u$ for all $n\in\mathbb N$. If $E$ has order continuous norm, then by Lemma~\ref{lem:monotone-norm-convergence} the sequence $(\underline w_n)_{n\in\mathbb N}$ converges in $L_{\loc}^1([0,\infty),E)$. If the semigroup is compact, then $G$ is compact by Lemma~\ref{lem:G-compact}. As $\underline w_{n+1}=G(\underline w_n)$ for all $n\in\mathbb N$ the sequence $(\underline w_n)_{n\in\mathbb N}$ has a convergent sub-sequence. Since it is also monotone, Lemma~\ref{lem:monotone-norm-convergence} implies that the sequence itself converges.

  (i) By what we proved,
  \begin{equation*}
    u_{\min}:=\lim_{n\to\infty}\underline w_n
  \end{equation*}
  in the sense of $L_{\loc}^1([0,\infty),E)$. Since $G$ is continuous by Lemma~\ref{lem:G-invariant} we have $G(u_{\min})=u_{\min}$. Similar arguments show that
  \begin{equation*}
    u_{\max}:=\lim_{n\to\infty}\overline w_n
  \end{equation*}
  exists in the sense of $L_{\loc}^1([0,\infty),E)$ and $G(u_{\max})=u_{\max}$. In particular $u_{\min},u_{\max}\in C([0,\infty),E)$ are solutions of \eqref{eq:sle-abstract}. It follows from \eqref{eq:iteration-comparison} that $u_{\min}\leq u_{\max}$. Let now $u$ be a solution of \eqref{eq:sle-abstract}. Since $\underline u\leq u(t)\leq\overline u$ for all $t\geq 0$ it follows that
  \begin{equation*}
    \underline w_0
    =\underline u
    \leq G(\underline u)
    \leq G(u)
    =u
    \leq G(\overline u)
    \leq\overline u
    =\overline w_0.
  \end{equation*}
  It follows inductively that $\underline w_n\leq u\leq \overline w_n$ for all $n\in\mathbb N$. In fact, if this is true for some $n\geq 0$, then
  \begin{equation*}
    \underline w_{n+1}
    =G(\underline w_n)
    \leq G(u)
    =u\leq G(\overline w_n)
    =\overline w_{n+1}.
  \end{equation*}
  Letting $n\to\infty$ yields $u_{\min}\leq u\leq u_{\max}$. This proves part (i) of Theorem~\ref{thm:existence}. The argument also proves part (iii) of Theorem~\ref{thm:existence} by taking $u_0=\underline u$ and $u_0=\overline u$, respectively.

  (ii) To prove part (ii) of Theorem~\ref{thm:existence} we let $u_0\leq\tilde u_0$ with corresponding iterated lower sequences $(\underline w_n)_{n\in\mathbb N}$ and $(\tilde{\underline{w}}_n)_{n\in\mathbb N}$, respectively. One shows inductively that $\underline w_n\leq\tilde{\underline{w}}_n$ for all $n\in\mathbb N$ and hence $u_{\min}\leq \tilde u_{\min}$. Similarly $u_{\max}\leq\tilde u_{\max}$.
\end{proof}
The above proposition asserts the convergence of the sequences $(\underline w_n)_{n\in\mathbb N}$ and $(\overline w_n)_{n\in\mathbb N}$ in $L_{\loc}^1([0,\infty),E)$. The sequences and their limits are continuous functions, and the convergence is monotone. We can use Dini's theorem to show that the convergence is in fact locally uniform.
\begin{proposition}[Uniform convergence]
  \label{prop:uniform-convergence}
  Under the assumptions of Theorem~\ref{thm:convergence-iteration} we have that $\underline w_n\to u_{\min}$ and $\overline w_n\to u_{\max}$ in $C([0,T],E)$ for every $T>0$.
\end{proposition}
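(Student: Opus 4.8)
The plan is to deduce locally uniform convergence from the $L^1_{\loc}$-convergence already established in Theorem~\ref{thm:convergence-iteration} by a Dini-type argument. The starting observation is that each iterate $\underline w_n=G(\underline w_{n-1})$ and the limit $u_{\min}=G(u_{\min})$ lie in $C([0,\infty),E)$ by Lemma~\ref{lem:Gu-continuous}, and that the convergence is monotone by Proposition~\ref{prop:iterated-sequences}. Since Dini's theorem is classically a statement about real-valued functions, I would first pass to scalar functions using the equivalent \emph{monotone} norm $\tnorm{{\cdot}}$ supplied by the normality of the cone (Lemma~\ref{lem:normal-cone}), which converts the order-monotonicity of the iterates into genuine monotonicity of real-valued functions.

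First I would upgrade the $L^1_{\loc}$-convergence to pointwise convergence at every $t$. Fix $t\geq 0$. Since $\underline w_{n-1}\to u_{\min}$ in $L^1_{\loc}([0,\infty),E)$ and $G$ is continuous in the pointwise sense established inside the proof of Lemma~\ref{lem:G-invariant}, a subsequence-of-every-subsequence argument applies: given any subsequence, extract a further one along which $\underline w_{n-1}(s)\to u_{\min}(s)$ for almost every $s\in[0,t]$, use the continuity of $F$ together with dominated convergence with dominating constant from \eqref{eq:image-bounded-C}, and conclude $\underline w_n(t)=G(\underline w_{n-1})(t)\to G(u_{\min})(t)=u_{\min}(t)$ in $E$. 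As the limit is the same along every subsequence, $\underline w_n(t)\to u_{\min}(t)$ in $E$ for every $t\geq 0$, and likewise $\overline w_n(t)\to u_{\max}(t)$.

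Next I would set $g_n(t):=\tnorm{u_{\min}(t)-\underline w_n(t)}$ on the compact interval $[0,T]$. Each $g_n$ is continuous, being the composition of the continuous map $t\mapsto u_{\min}(t)-\underline w_n(t)$ with the continuous norm. Because $\underline w_n\leq\underline w_{n+1}\leq u_{\min}$ by \eqref{eq:iteration-comparison}, we have $0\leq u_{\min}(t)-\underline w_{n+1}(t)\leq u_{\min}(t)-\underline w_n(t)$, so the monotonicity of $\tnorm{{\cdot}}$ forces $g_{n+1}(t)\leq g_n(t)$; together with the previous step this gives $g_n(t)\downarrow 0$ for every $t\in[0,T]$. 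Dini's theorem for the decreasing sequence of continuous real-valued functions $(g_n)$ then yields $\sup_{t\in[0,T]}g_n(t)\to 0$, and since $\tnorm{{\cdot}}$ is equivalent to $\|{\cdot}\|$ this is exactly $\underline w_n\to u_{\min}$ in $C([0,T],E)$. The decreasing sequence $\overline w_n\to u_{\max}$ is treated identically with $g_n(t):=\tnorm{\overline w_n(t)-u_{\max}(t)}$.

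I expect the only real obstacle to be the reduction of this vector-valued monotone convergence to a form where the scalar Dini theorem applies; the monotone equivalent norm resolves this cleanly, as it turns the order-monotonicity of $(\underline w_n)$ into the pointwise monotonicity of the real-valued functions $g_n$, and the pointwise-in-$t$ continuity of $G$ from Lemma~\ref{lem:G-invariant} supplies the pointwise limit that Dini requires. Everything else is routine once pointwise convergence at every $t$ is in hand.
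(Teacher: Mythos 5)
Your proof is correct, and while it is Dini-based like the paper's, the execution is genuinely different. The paper scalarizes through the dual cone: with the equivalent norm $\tnorm{v}=\sup_{v'\in B_+'}|\langle v',v\rangle|$ from Lemma~\ref{lem:normal-cone}, it considers $\tilde{\underline w}_n(t,v'):=\langle v',\underline w_n(t)\rangle$, which increase in $n$, and applies Dini's theorem on the compact set $K=[0,T]\times B_+'$, where $B_+'$ is weak$^*$ compact by Banach--Alaoglu; uniform convergence on $K$ is then exactly uniform convergence in $\tnorm{{\cdot}}$, hence in the norm of $E$. You instead scalarize through the monotone equivalent norm itself, forming the decreasing continuous functions $g_n(t)=\tnorm{u_{\min}(t)-\underline w_n(t)}$ and applying the classical Dini theorem on $[0,T]$ alone, which spares you Banach--Alaoglu and the joint weak$^*$ continuity check. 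More importantly, your preliminary step --- upgrading the $L_{\loc}^1$ convergence of Theorem~\ref{thm:convergence-iteration} to norm convergence $\underline w_n(t)\to u_{\min}(t)$ at \emph{every} $t$, via the fixed-point identity $\underline w_n=G(\underline w_{n-1})$, the bound \eqref{eq:image-bounded-C}, dominated convergence, and the subsequence-of-every-subsequence argument --- is not redundant: Dini's theorem requires pointwise convergence at every point of the compact set, and almost-everywhere convergence of a monotone sequence of continuous functions to a continuous limit does not imply convergence everywhere (consider $f_n(x)=\min\{nx,1\}$ on $[0,1]$). The paper's proof, which has only $L^1([0,T],E)$ convergence in hand, passes over this point silently; it needs the pointwise convergence of the pairings $\langle v',\underline w_n(t)\rangle$ at every $(t,v')$, and that must be supplied by an argument like yours. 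So your version is, if anything, the more complete one; what the paper's route buys is that only weak pairings against positive functionals, rather than norm convergence at each $t$, ever need to be discussed.
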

\begin{proof}
  Since the cone in $E$ is normal
  \begin{equation*}
    \tnorm{v}:=\sup_{v\in B_+'}|\langle v',v\rangle|
  \end{equation*}
  defines an equivalent norm on $E$, see Lemma~\ref{lem:normal-cone} in the appendix. Hence there exists $\alpha>0$ such that $\|v\|_E\leq\alpha\tnorm{v}$ for all $v\in E$.

  By the Banach-Alaoglu theorem $B_+'$ is compact with respect to the weak$^*$ topology. Thus the set $K:=[0,T]\times B_+'$ is compact. For $v\in C([0,T],E)$, the function given by $\tilde v(t,v'):=\langle v',v(t)\rangle$ for all $t\in[0,T]$ and $v'\in B_+'$ defines $\tilde v\in C(K)$. Due to the monotone convergence, it follows from Dini's theorem that $\tilde{\underline w}_n\to \tilde u_{\min}$ uniformly on $K$. Thus
  \begin{align*}
    \|\underline w_n-u_{\min}\|_{C([0,T],E)}
     & =\sup_{t\in[0,T]}\left\|\underline w_n(t)-u_{\min}(t)\right\|                            \\
     & \leq\alpha\sup_{t\in[0,T]}\tnorm{\underline w_n(t)-u_{\min}(t)}                          \\
     & =\alpha\sup_{(t,v')\in K}\left|\tilde{\underline w}_n(t,v')-\tilde u_{\min}(t,v')\right|
    \xrightarrow{n\to\infty}0,
  \end{align*}
  proving the uniform convergence.
\end{proof}

\section{Asymptotic behaviour: Convergence to equilibria}
\label{sec:asymptotics}
The aim of this section is to prove the last part of Theorem~\ref{thm:existence} on the convergence to an equilibrium. As before, $(S(t))_{t\geq 0}$ is a positive $C_0$ semigroup with generator $-A$. The underlying space $E$ is an ordered Banach space with normal cone, $[\underline u,\overline u]$  is an order interval in $E$, $F\colon [\underline u,\overline u]\to E$ is continuous and quasi-increasing. Moreover, $\underline u$ and $\overline u$ are sub- and super-solutions as in Definition~\ref{def:sub-equilibria}. An \emph{equilibrium} of the equation $\dot u(t)+ Au(t)=F(u(t))$ is an element $v\in D(A)$ satisfying the equation $Av=F(v)$. This is equivalent to $v$ being a stationary solution of $\dot u(t)+ Au(t)=F(u(t))$.

We are interested in the convergence of solutions of \eqref{eq:sle-abstract} as $t\to\infty$. It turns out that the limit is an equilibrium.
\begin{proposition}[Convergence to equilibria]
  \label{prop:convergence-equilibrium}
  Let $u_0\in[\underline u,\overline u]$ and let $u$ be a solution of \eqref{eq:sle-abstract}. If $\lim_{t\to\infty}u(t)=u_\infty$ exists, then $u_\infty$ is an equilibrium.
\end{proposition}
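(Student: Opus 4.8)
The plan is to use the characterisation of mild solutions from Proposition~\ref{prop:weak-solution-characterisation} and pass to the limit as $t\to\infty$ in the weak formulation. Since $u$ is a mild solution with $\lim_{t\to\infty}u(t)=u_\infty$, I first want to show that $u_\infty$ is a weak sub-solution and a weak super-solution simultaneously, because the remark following Definition~\ref{def:sub-equilibria} tells us that such an element is automatically an equilibrium (using that $A$ is closed and densely defined). So the real task is to establish the weak stationary identity $\langle u_\infty,A'v'\rangle=\langle F(u_\infty),v'\rangle$ for every $v'\in D(A')$.

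The key device is the translated solution. By Lemma~\ref{lem:shift}, for each $t_0\geq 0$ the function $v_{t_0}(t):=u(t_0+t)$ is again a mild solution of \eqref{eq:sle-abstract} with initial value $u(t_0)$. As $t_0\to\infty$ we have $u(t_0)\to u_\infty$, and in fact $v_{t_0}(t)\to u_\infty$ for every fixed $t$. The natural strategy is therefore to integrate the weak identity from Proposition~\ref{prop:weak-solution-characterisation} over a fixed interval. Concretely, for $v'\in D(A')$ I would integrate the relation
\begin{equation*}
  \frac{d}{dt}\langle v',u(t)\rangle+\langle A'v',u(t)\rangle=\langle v',F(u(t))\rangle
\end{equation*}
over $[t_0,t_0+1]$ to obtain
\begin{equation*}
  \langle v',u(t_0+1)-u(t_0)\rangle+\int_{t_0}^{t_0+1}\langle A'v',u(s)\rangle\,ds=\int_{t_0}^{t_0+1}\langle v',F(u(s))\rangle\,ds.
\end{equation*}
Now I let $t_0\to\infty$. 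The boundary term $\langle v',u(t_0+1)-u(t_0)\rangle$ tends to zero because both $u(t_0+1)$ and $u(t_0)$ converge to $u_\infty$. For the two integrals, a change of variables $s=t_0+r$ turns them into $\int_0^1\langle A'v',u(t_0+r)\rangle\,dr$ and $\int_0^1\langle v',F(u(t_0+r))\rangle\,dr$; since $u(t_0+r)\to u_\infty$ uniformly in $r\in[0,1]$ and $u$ stays in the bounded order interval $[\underline u,\overline u]$, dominated convergence gives the limits $\langle A'v',u_\infty\rangle$ and $\langle v',F(u_\infty)\rangle$ respectively, the latter using the continuity of $F$. This yields $\langle A'v',u_\infty\rangle=\langle v',F(u_\infty)\rangle$ for all $v'\in D(A')$, which is exactly the weak equilibrium identity.

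The step I expect to be the main obstacle is justifying the passage to the limit inside the integral of the nonlinear term $\int_0^1\langle v',F(u(t_0+r))\rangle\,dr$. One must ensure that $F(u(t_0+r))\to F(u_\infty)$ in a sense strong enough to pass the limit; continuity of $F$ gives pointwise convergence, and the uniform bound \eqref{eq:image-bounded-C} on $\|F(v)\|$ over the order interval supplies the dominating constant, so dominated convergence applies. A cleaner alternative, avoiding even the integrated identity, is to work directly with the mild-solution formula: from $u(t_0+1)=S(1)u(t_0)+\int_0^1 S(1-s)F(u(t_0+s))\,ds$ one lets $t_0\to\infty$, using $u(t_0)\to u_\infty$ and the dominated convergence argument above to conclude $u_\infty=S(1)u_\infty+\int_0^1 S(1-s)F(u_\infty)\,ds$, i.e.\ the constant function $u_\infty$ is a mild solution, hence by Proposition~\ref{prop:weak-solution-characterisation} an equilibrium. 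I would likely present this second, more direct route, as it sidesteps differentiability bookkeeping and reduces everything to the continuity of $F$ together with the order-interval bound.
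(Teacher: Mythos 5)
Your proof is correct in substance, but it takes a genuinely different route from the paper's. The paper first invokes the scaling Lemma~\ref{lem:sle-abstract-scaled} to assume without loss of generality that $\|S(t)\|\leq Me^{-\omega t}$ for some $\omega>0$; then $A$ is invertible with $A^{-1}=\int_0^\infty S(t)\,dt$, and splitting the mild-solution formula as $u(t)=S(t)u_0+\int_0^\infty S(s)F(u(t-s))\,ds-\int_t^\infty S(s)F(u(t-s))\,ds$ and letting $t\to\infty$ (dominated convergence plus the bound \eqref{eq:image-bounded-C}) gives $u_\infty=A^{-1}F(u_\infty)$ directly. Membership $u_\infty\in D(A)$ thus comes for free from the resolvent representation, and no weak formulation is needed. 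Your argument instead passes to the limit in the translated mild-solution identity (or in the integrated weak identity of Proposition~\ref{prop:weak-solution-characterisation}) to obtain the weak stationary identity $\langle u_\infty,A'v'\rangle=\langle v',F(u_\infty)\rangle$ for all $v'\in D(A')$, and then upgrades this to $u_\infty\in D(A)$, $Au_\infty=F(u_\infty)$ via the closedness/density argument in the remark after Definition~\ref{def:sub-equilibria}. What your route buys: it avoids the rescaling and the Laplace-transform representation of the resolvent, using only Lemma~\ref{lem:shift}, the bound \eqref{eq:image-bounded-C} and dominated convergence. What the paper's route buys: it bypasses the duality/closedness step entirely, since invertibility of the (shifted) generator hands over $D(A)$-membership in one stroke.

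One repair is needed in your preferred second route. The single identity $u_\infty=S(1)u_\infty+\int_0^1S(1-s)F(u_\infty)\,ds$ does \emph{not} by itself say that the constant function $t\mapsto u_\infty$ is a mild solution: for example, with $F\equiv 0$ and a positive periodic group satisfying $S(1)=I$ (translation on periodic continuous functions), every vector satisfies this identity at $\tau=1$, yet only elements of the kernel of $A$ are equilibria. You must run the same limit argument for every fixed $\tau>0$, i.e.\ let $t_0\to\infty$ in $u(t_0+\tau)=S(\tau)u(t_0)+\int_0^\tau S(\tau-s)F(u(t_0+s))\,ds$, obtaining $u_\infty=S(\tau)u_\infty+\int_0^\tau S(\tau-s)F(u_\infty)\,ds$ for all $\tau\geq 0$; only then do Proposition~\ref{prop:weak-solution-characterisation} and the remark after Definition~\ref{def:sub-equilibria} apply. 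The fix is immediate since your limit argument is uniform in this parameter, and your first (integrated) route does not suffer from this issue at all.
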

\begin{proof}
  By Lemma~\ref{lem:sle-abstract-scaled} we can choose $\omega>0$ such that
  \begin{equation*}
    \|S(t)\|\leq Me^{-\omega t}
  \end{equation*}
  for some $M\geq 1$ and for all $t\geq 0$, that is, $(S(t))_{t\geq 0}$ is exponentially stable. Then $A$ is invertible and by the Laplace transform representation of the resolvent,
  \begin{equation*}
    A^{-1}=\int_0^\infty S(t)\,dt.
  \end{equation*}
  Note that
  \begin{align*}
    u(t) & =S(t)u_0+\int_0^tS(s)F(u(t-s))\,ds                                        \\
         & =S(t)u_0+\int_0^\infty S(s)F(u(t-s))\,ds-\int_t^\infty S(s)F(u(t-s))\,ds.
  \end{align*}
  Recall the bound \eqref{eq:image-bounded-C} on $F(u(t-s))$. Due to the exponential stability of $(S(t))_{t\geq 0}$ and the boundedness of $F(u(t-s))$, the first and the last term in the above identity converge to zero as $t\to\infty$. It follows from the dominated convergence theorem and the assumption that
  \begin{equation*}
    \lim_{t\to\infty}\int_0^\infty S(s)F(u(t-s))\,ds
    =\int_0^\infty S(s)F(u_\infty)\,ds
    =A^{-1}F(u_\infty).
  \end{equation*}
  Thus, $u_\infty=A^{-1}F(u_\infty)$, that is $u_\infty\in D(A)$ and $Au_\infty=F(u_\infty)$ as claimed.
\end{proof}
We say that $u$ is a \emph{solution} of $\dot u(t)+ Au(t)=F(u(t))$ if $u\in C([0,\infty),E)$ is a solution to \eqref{eq:sle-abstract} with $u_0:=u(0)$. The semigroup $(S(t))_{t\geq 0}$ is called \emph{eventually compact} if there exists $t_0>0$ such that $S(t_0)$ is a compact operator. As a consequence, $S(t)=S(t_0)S(t-t_0)$ is compact for all $t\geq t_0$ and $S\colon [t_0,\infty)\to\mathcal L(E)$ is continuous with respect to the operator norm, see \cite[Lemma~II.4.22]{engel:00:ops}. We next look at the relative compactness of orbits.
\begin{proposition}[compact orbits]
  \label{prop:compact-orbit}
  Let $(S(t))_{t\geq 0}$ be eventually compact and let $u$ be a solution of $\dot u(t)+ Au(t)=F(u(t))$. Then the orbit $\{u(t)\colon t\geq 0\}$ is relatively compact in $E$.
\end{proposition}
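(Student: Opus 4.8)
The plan is to reduce the claim to the relative compactness of a single convolution term over a time window of length $t_0$, where $t_0>0$ is an instant at which $S(t_0)$ is compact. First I would record that the orbit is automatically bounded: since $u(t)\in[\underline u,\overline u]$ for all $t\geq 0$ and the cone is normal, the order interval $[\underline u,\overline u]$ is norm bounded, and with $C$ as in \eqref{eq:image-bounded-C} one has $\|F(u(s))\|\leq C$ for every $s$. The restriction $u|_{[0,t_0]}$ is continuous on a compact interval, so $\{u(t)\colon 0\leq t\leq t_0\}$ is compact. As a finite union of relatively compact sets is relatively compact, it then suffices to treat the tail $\{u(t)\colon t\geq t_0\}$.

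For $t\geq t_0$, the translation identity from Lemma~\ref{lem:shift}, together with the semigroup law $S(t-s)=S(t_0)S(t-t_0-s)$ valid for $s\leq t-t_0$, gives
\[
  u(t)=S(t_0)u(t-t_0)+\int_0^{t_0}S(r)F(u(t-r))\,dr.
\]
The first summand is handled immediately: $\{u(t-t_0)\colon t\geq t_0\}$ lies in the bounded set $[\underline u,\overline u]$ and $S(t_0)$ is compact, so $\{S(t_0)u(t-t_0)\colon t\geq t_0\}$ is relatively compact in $E$. Thus everything comes down to the short-time convolution $I(t):=\int_0^{t_0}S(r)F(u(t-r))\,dr$.

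The step I expect to be the main obstacle is precisely the relative compactness of $\{I(t)\colon t\geq t_0\}$, because over the window $r\in[0,t_0)$ the operators $S(r)$ need not be compact, so the regularising effect that usually yields compactness is unavailable near $r=0$. My plan is to split $I(t)=\int_0^{\delta}+\int_\delta^{t_0}$ for small $\delta\in(0,t_0)$. The first piece has norm at most $\delta\,C\sup_{r\in[0,t_0]}\|S(r)\|$, which tends to $0$ as $\delta\to0$ uniformly in $t$. For the second piece I would factor $S(r)=S(\delta)S(r-\delta)$ and write $\int_\delta^{t_0}S(r)F(u(t-r))\,dr=S(\delta)\int_\delta^{t_0}S(r-\delta)F(u(t-r))\,dr$, where the remaining integral is bounded uniformly in $t$ (again via $C$ and local boundedness of $\|S(\cdot)\|$); compactness of $S(\delta)$ then makes this family relatively compact. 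This is exactly the point where the compactness hypothesis is essential: one needs $S(\delta)$ to be compact for a value $\delta$ strictly inside the window, which is immediate when the semigroup is compact and is the genuine difficulty to be circumvented when only $S(t_0)$ is known to be compact.

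Granting that $\{I(t)\colon t\geq t_0\}$ is relatively compact, the tail $\{u(t)\colon t\geq t_0\}$ is then a uniformly small perturbation of a relatively compact set and is therefore totally bounded, hence relatively compact; combined with the compactness of $u([0,t_0])$ this gives relative compactness of the whole orbit $\{u(t)\colon t\geq 0\}$. Note that, in contrast with the convergence argument in Proposition~\ref{prop:convergence-equilibrium}, no scaling to exponential stability is needed here, since the boundedness of the orbit is supplied directly by the order interval and the normality of the cone.
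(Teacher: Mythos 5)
You have isolated exactly the right difficulty, but you have not overcome it, so as a proof of the proposition \emph{as stated} there is a genuine gap. The hypothesis is only that the semigroup is \emph{eventually} compact: a single $t_0>0$ with $S(t_0)$ compact. Your reduction is correct up to and including the term $S(t_0)u(t-t_0)$, and the identity $u(t)=S(t_0)u(t-t_0)+I(t)$ with $I(t)=\int_0^{t_0}S(r)F(u(t-r))\,dr$ is valid. But your treatment of $I(t)$ factors $\int_\delta^{t_0}S(r)F(u(t-r))\,dr=S(\delta)\int_\delta^{t_0}S(r-\delta)F(u(t-r))\,dr$ and therefore needs $S(\delta)$ to be compact for arbitrarily small $\delta>0$. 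Eventual compactness supplies no such $\delta$: for the nilpotent right-shift semigroup on $L^1(0,1)$ one has $S(t)=0$ for $t\geq 1$, while no $S(\delta)$ with $0<\delta<1$ is compact (its image of the unit ball contains the unit ball of $L^1(\delta,1)$ extended by zero). Since you explicitly defer rather than resolve this case, your argument proves the proposition only under the stronger hypothesis that $S(t)$ is compact for \emph{all} $t>0$.

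Two observations put this in perspective. First, the paper's own proof stumbles at precisely the same point: its displayed identity
\begin{equation*}
  u(t)=S(t_0)\Bigl[S(t-t_0)u(0)+\int_0^tS(t-t_0-s)F(u(s))\,ds\Bigr]
\end{equation*}
evaluates the semigroup at the negative times $t-t_0-s<0$ for $s\in(t-t_0,t]$; the correct computation leaves behind the tail $\int_{t-t_0}^tS(t-s)F(u(s))\,ds$, which is exactly your $I(t)$ and which cannot be pulled through $S(t_0)$. So the obstruction you flagged is real, the paper does not circumvent it either, and whether the proposition holds under mere eventual compactness is settled by neither argument. Second, in the only place the proposition is invoked, Theorem~\ref{thm:asymptotics}, the semigroup is assumed compact for all $t>0$, and there your proof is complete and in fact tidier than the paper's: because your integral runs over the fixed window $[0,t_0]$ rather than over $[0,t-t_0]$, the uniform bound needs only \eqref{eq:image-bounded-C} and $\sup_{r\in[0,t_0]}\|S(r)\|<\infty$, so the rescaling to an exponentially stable semigroup via Lemma~\ref{lem:sle-abstract-scaled} is, as you observe, dispensable. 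If you want to defend the statement in its stated generality, you must either find an argument for $I(t)$ that uses only compactness of $S(t_0)$ (for instance exploiting norm continuity of $t\mapsto S(t)$ on $[t_0,\infty)$), or produce a counterexample and weaken the statement to compact semigroups.
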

\begin{proof}
  Let $t_0>0$ such that $S(t_0)$ is compact. It is clear that $\{u(t)\colon t\in[0,t_0]\}$ is compact. By Lemma~\ref{lem:sle-abstract-scaled} we can choose $\omega>0$ such that
  \begin{equation*}
    \|S(t)\|\leq Me^{-\omega t}
  \end{equation*}
  for some $M\geq 1$ and for all $t\geq 0$. If $t\geq t_0$ we have that
  \begin{equation*}
    u(t)=S(t_0)\left[S(t-t_0)u(0)+\int_0^tS(t-t_0-s)F(u(s))\,ds\right].
  \end{equation*}
  Hence, $\{u(t)\colon t\geq t_0\}$ is the image of a bounded subset of $E$ under the compact operator $S(t_0)$ and thus it is relatively compact.
\end{proof}
The following theorem proves part (iv) of Theorem~\ref{thm:existence}.
\begin{theorem}[Asymptotics]
  \label{thm:asymptotics}
  Assume that $E$ has order continuous norm or that the semigroup $(S(t))_{t\geq 0}$ is compact. Let $U_{\min}$ be the minimal solution of \eqref{eq:sle-abstract} with $u(0)=\underline u$. and let $U_{\max}$ be the maximal solution of \eqref{eq:sle-abstract} with $u(0)=\overline u$. Then the following assertions hold.
  \begin{enumerate}[\normalfont (i)]
  \item The solution $U_{\min}$ is increasing and $U_{\max}$ is decreasing. Moreover, the limits
    \begin{equation*}
      u_*=\lim_{t\to\infty}U_{\min}(t)\qquad\text{and}\qquad u^*=\lim_{t\to\infty}U_{\max}(t)
    \end{equation*}
    exist and are solutions of $Av=F(v)$.
  \item If $v\in[\underline u,\overline u]$ is a solution of $Av=F(v)$, then $u_*\leq v\leq u^*$, that is, $u_*$ and $u^*$ are the minimal and the maximal solutions of $Av=F(v)$ in $[\underline u,\overline u]$.
  \item Let $u_0\in [\underline u,u_*]$ and let $u_{\min}$ be the minimal solution of \eqref{eq:sle-abstract}. Then
    \begin{equation*}
      \lim_{t\to\infty}u_{\min}(t)=u_*
    \end{equation*}
    and similarly if $u_0\in [u^*,\overline u]$, then
    \begin{equation*}
      \lim_{t\to\infty}u_{\max}(t)=u^*.
    \end{equation*}
  \end{enumerate}
\end{theorem}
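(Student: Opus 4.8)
The plan is to derive all three assertions from the comparison results already contained in parts (i)--(iii) of Theorem~\ref{thm:existence} (established in Theorem~\ref{thm:convergence-iteration}), together with the translation Lemma~\ref{lem:shift} and the two convergence mechanisms furnished by Propositions~\ref{prop:convergence-equilibrium} and \ref{prop:compact-orbit}. I would begin with the monotonicity in (i). Fix $h>0$ and set $v(t):=U_{\min}(t+h)$. By Lemma~\ref{lem:shift}, $v$ is a mild solution of \eqref{eq:sle-abstract} with initial value $v(0)=U_{\min}(h)\in[\underline u,\overline u]$, so part (iii) of Theorem~\ref{thm:existence} gives $U_{\min}(t)\leq v(t)=U_{\min}(t+h)$ for all $t\geq 0$; thus $U_{\min}$ is increasing. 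The identical argument applied to $U_{\max}(t+h)$ shows that $U_{\max}$ is decreasing.

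The main point is the existence of the limits, and I expect the need to treat the two standing hypotheses separately to be the principal obstacle. Since $U_{\min}$ is increasing and order bounded above by $\overline u$, I would pick any sequence $t_n\uparrow\infty$ and show that $(U_{\min}(t_n))_n$ converges. If $E$ has order continuous norm this is immediate, as $(U_{\min}(t_n))_n$ is an order bounded increasing sequence. If instead the semigroup is compact, it is eventually compact, so by Proposition~\ref{prop:compact-orbit} the orbit is relatively compact and $(U_{\min}(t_n))_n$ has a convergent subsequence; being increasing and order bounded, the whole sequence then converges, since an increasing sequence with a convergent subsequence converges by monotonicity of the equivalent norm from Lemma~\ref{lem:normal-cone}. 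Writing $u_*$ for the limit, the monotonicity of $U_{\min}$ together with the monotone norm upgrades this to $\lim_{t\to\infty}U_{\min}(t)=u_*$ independently of the chosen sequence, and Proposition~\ref{prop:convergence-equilibrium} identifies $u_*$ as an equilibrium, i.e.\ $u_*\in D(A)$ with $Au_*=F(u_*)$. The same reasoning produces $u^*=\lim_{t\to\infty}U_{\max}(t)$ as an equilibrium, completing (i).

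For (ii), I would use that any equilibrium $v\in[\underline u,\overline u]$ gives a constant mild solution $u(t)\equiv v$ of \eqref{eq:sle-abstract} with $u(0)=v\in[\underline u,\overline u]$. Part (iii) of Theorem~\ref{thm:existence} then forces $U_{\min}(t)\leq v\leq U_{\max}(t)$ for all $t\geq 0$, and letting $t\to\infty$ yields $u_*\leq v\leq u^*$. Since $u_*$ and $u^*$ are themselves equilibria in $[\underline u,\overline u]$ by (i), they are the minimal and maximal equilibria.

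Finally, for (iii) I would combine the monotone dependence on initial data from part (ii) of Theorem~\ref{thm:existence} with a squeezing argument. Let $u_0\in[\underline u,u_*]$. From $\underline u\leq u_0$ we obtain $U_{\min}(t)\leq u_{\min}(t)$, while from $u_0\leq u_*$ the minimal solution from $u_0$ is dominated by the minimal solution from $u_*$, which in turn lies below the constant solution $u_*$ by minimality; hence $u_{\min}(t)\leq u_*$. Thus $U_{\min}(t)\leq u_{\min}(t)\leq u_*$, and since $U_{\min}(t)\to u_*$, monotonicity of the equivalent norm gives $\tnorm{u_*-u_{\min}(t)}\leq\tnorm{u_*-U_{\min}(t)}\to 0$, so $u_{\min}(t)\to u_*$. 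The symmetric argument on $[u^*,\overline u]$, using $u^*\leq u_{\max}(t)\leq U_{\max}(t)\to u^*$, gives $u_{\max}(t)\to u^*$.
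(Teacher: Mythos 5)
Your proposal is correct and follows essentially the same route as the paper's own proof: monotonicity via Lemma~\ref{lem:shift} and the comparison principle of Theorem~\ref{thm:existence}(iii), existence of the limits by order continuity or by compact orbits (Proposition~\ref{prop:compact-orbit}) combined with monotone convergence under the equivalent monotone norm, identification of the limits as equilibria via Proposition~\ref{prop:convergence-equilibrium}, and the squeezing argument $U_{\min}(t)\leq u_{\min}(t)\leq u_*$ with the monotone norm from Lemma~\ref{lem:normal-cone} for part (iii). The only cosmetic difference is that you spell out the subsequence-to-full-sequence step explicitly where the paper cites its lemma on convergence of monotone sequences.
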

\begin{proof}
  (i) To prove that $U_{\min}\colon [0,\infty)\to E$ is increasing we use the autonomous nature of the problem. Given $t_0\geq 0$ and $t>0$ we let
  \begin{displaymath}
    u(t):=U_{\min}(t_0+t).
  \end{displaymath}
  Then by Lemma~\ref{lem:shift} the function $u$ solves equation \eqref{eq:sle-abstract} with initial condition $u(0)=U_{\min}(t_0)$. Since $U_{\min}(t_0)\geq\underline u$, the comparison principle from Theorem~\ref{thm:existence}(iii) proved in Theorem~\ref{thm:convergence-iteration} implies that
  \begin{displaymath}
    U_{\min}(t)\leq u(t)=U_{\min}(t_0+t)
  \end{displaymath}
  for all $t_0,t\geq 0$. Hence $U_{\min}$ is increasing as a function of $t$. A similar argument shows that $U_{\max}$ is decreasing as a function of $t$.

  Next consider the asymptotic limits of these solutions. If $E$ has order continuous norm, then the limits $u_*$ and $u^*$ exist. If $(S(t))_{t\geq 0}$ is compact, then by Proposition~\ref{prop:compact-orbit} the orbit $\{U_{\min}(t)\colon t\geq 0\}$ is relatively compact. It follows from Lemma~\ref{lem:monotone-norm-convergence} that $u_*:=\lim_{t\to\infty}U_{\min}(t)$ exists, and similarly for $u^*$. They are solutions of $Av=F(v)$ by Proposition~\ref{prop:convergence-equilibrium}.

  (ii) We need to show that $u_*$ and $u^*$ are the minimal and the maximal solutions of $Av=F(v)$ in $[\underline u,\overline u]$. Hence let $v\in D(A)\cap[\underline u,\overline u]$ satisfy $Av=F(v)$. In particular, $v$ is a solution of \eqref{eq:sle-abstract} with initial condition $u_0=v$. It follows from Theorem~\ref{thm:convergence-iteration} that $U_{min}(t)\leq v\leq U_{max}(t)$ for all $t\geq 0$. As the cone is closed it follows that
  \begin{equation*}
    u_*=\lim_{t\to\infty}U_{\min}(t)\leq v\leq\lim_{t\to\infty}U_{\max}(t)=u^*,
  \end{equation*}
  proving our claim.

  (iii) Let $u_0\in [\underline u,u_*]$ and let $u_{\min}$ be the corresponding minimal solution of \eqref{eq:sle-abstract}. Let $\tilde u_{\min}$ be the minimal solution with initial condition $u_*$. Since $u_*$ is a solution with initial condition $u_*$ it follows from Theorem~\ref{thm:convergence-iteration} that
  \begin{equation*}
    U_{\min}(t)\leq u_{\min}(t)\leq \tilde u_{\min}(t)\leq u_*
  \end{equation*}
  and thus
  \begin{equation*}
    0\leq u_*-u_{\min}(t)\leq u_*-U_{\min}(t)
  \end{equation*}
  for all $t\geq 0$. Since $E$ has a normal cone, by Lemma~\ref{lem:normal-cone} there exists a monotone equivalent norm $\tnorm{{\cdot}}$ on $E$. Hence, by (i)
  \begin{equation*}
    \lim_{t\to\infty}\tnorm{u_*-u_{\min}(t)}
    \leq\lim_{t\to\infty}\tnorm{u_*-U_{\min}(t)}=0,
  \end{equation*}
  that is, $u_{\min}(t)\to u_*$ as $t\to\infty$.
\end{proof}
As a direct consequence we obtain the following corollary.
\begin{corollary}
  Assume that $E$ has order continuous norm or that the semigroup $(S(t))_{t\geq 0}$ is compact. Then, $Av=F(v)$ has an equilibrium in $[\underline u,\overline u]$. Moreover, if there is exactly one equilibrium $v$ in $[\underline u,\overline u]$, then $u(t)\to v$ as $t\to\infty$ for every solution of $\dot u(t)+Au(t)=F(u(t))$ with initial value in $[\underline u,\overline u]$.
\end{corollary}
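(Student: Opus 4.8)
The plan is to derive the corollary directly from Theorem~\ref{thm:asymptotics}, which has just been proved. The existence of an equilibrium is almost immediate: by part~(i) of Theorem~\ref{thm:asymptotics}, the limits $u_*=\lim_{t\to\infty}U_{\min}(t)$ and $u^*=\lim_{t\to\infty}U_{\max}(t)$ exist and are solutions of $Av=F(v)$, so in particular $u_*$ is an equilibrium lying in $[\underline u,\overline u]$. This settles the first assertion without any further work.

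For the second assertion, suppose there is exactly one equilibrium $v$ in $[\underline u,\overline u]$. Since $u_*$ and $u^*$ are both equilibria in $[\underline u,\overline u]$ by part~(i), uniqueness forces $u_*=u^*=v$. Now let $u$ be any solution of $\dot u(t)+Au(t)=F(u(t))$ with $u(0)=u_0\in[\underline u,\overline u]$. The key is the sandwiching from Theorem~\ref{thm:existence}(iii) (established in Theorem~\ref{thm:convergence-iteration}): for all $t\geq 0$ we have $U_{\min}(t)\leq u(t)\leq U_{\max}(t)$. First I would pass to the monotone equivalent norm $\tnorm{{\cdot}}$ furnished by Lemma~\ref{lem:normal-cone}, so that $0\leq a\leq b$ implies $\tnorm{a}\leq\tnorm{b}$. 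Writing
\begin{equation*}
  0\leq u(t)-U_{\min}(t)\leq U_{\max}(t)-U_{\min}(t)
\end{equation*}
and applying monotonicity of the norm gives $\tnorm{u(t)-U_{\min}(t)}\leq\tnorm{U_{\max}(t)-U_{\min}(t)}$.

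The last step is to let $t\to\infty$. By part~(i), both $U_{\min}(t)\to u_*$ and $U_{\max}(t)\to u^*=u_*$, hence $U_{\max}(t)-U_{\min}(t)\to 0$, so the right-hand side tends to $0$ and therefore $\tnorm{u(t)-U_{\min}(t)}\to 0$. Combining with $U_{\min}(t)\to u_*=v$ and the triangle inequality yields $u(t)\to v$ in $E$ as $t\to\infty$, which is the desired conclusion. I expect no genuine obstacle here; the only point requiring a little care is to convert the order sandwich into a norm estimate, which is exactly what the normality of the cone (via the monotone equivalent norm) is for. The argument is essentially a squeeze between the two extremal solutions, whose common limit is the unique equilibrium.
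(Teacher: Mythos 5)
Your proposal is correct, and its first half (existence of an equilibrium, and $u_*=u^*=v$ under uniqueness) coincides exactly with the paper's proof. Where you diverge is the final step. The paper disposes of it in one line by citing Theorem~\ref{thm:asymptotics}(iii), whereas you re-run the squeeze argument from scratch: sandwich an arbitrary mild solution via $U_{\min}(t)\leq u(t)\leq U_{\max}(t)$ (Theorem~\ref{thm:existence}(iii), established in Theorem~\ref{thm:convergence-iteration}), use Theorem~\ref{thm:asymptotics}(i) to get $U_{\min}(t)\to u_*=v$ and $U_{\max}(t)\to u^*=v$, and convert the order estimate $0\leq u(t)-U_{\min}(t)\leq U_{\max}(t)-U_{\min}(t)$ into a norm estimate via the monotone equivalent norm $\tnorm{\cdot}$ of Lemma~\ref{lem:normal-cone}. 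This is a genuine gain in precision rather than a mere stylistic difference: as literally stated, Theorem~\ref{thm:asymptotics}(iii) only concerns the \emph{minimal} solution for initial values $u_0\in[\underline u,u_*]$ and the \emph{maximal} solution for $u_0\in[u^*,\overline u]$; an arbitrary $u_0\in[\underline u,\overline u]$ need not be order-comparable with $v$, and an arbitrary mild solution need not be the minimal or maximal one (recall $F$ is merely continuous, so solutions to a given initial value need not be unique). The paper's citation therefore has to be read as invoking the \emph{method} of the proof of part (iii) rather than its statement, and that method is precisely what you wrote out. In short: same overall strategy, but your version closes a small gap that the paper's one-line proof leaves to the reader.
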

\begin{proof}
  We observe that by Theorem~\ref{thm:asymptotics} the equation $Av=F(v)$ has the equilibria $u_*$ and $u^*$ In case of a unique equilibrium we must have $v=u_*=u^*$. Then Theorem~\ref{thm:asymptotics}(iii) implies that every orbit converges $v$.
\end{proof}

\begin{remark}
  Under the assumptions of the above theorem, the maps $[\underline u,\overline u]\to C([0,\infty),E)$ given by $u_0\mapsto u_{\min}$ or $u_0\mapsto u_{\max}$ are \emph{monotone dynamical systems} as defined in \cite{hirsch:88:scs}. We prove in Theorem~\ref{thm:asymptotics} that certain orbits converge to an equilibrium. One could ask whether all relatively compact orbits converge to some equilibrium. This is not true in general as \cite[page~2]{hirsch:88:scs} shows. It is possible to have time-periodic orbits.
\end{remark}

\section{Uniqueness of solutions}
\label{sec:uniqueness}
Our theory gives for each initial condition a maximal and a minimal solution to \eqref{eq:sle-abstract}. The following uniqueness result has a quite standard proof. Let $(S(t))_{t\geq 0}$ be a $C_0$-semigroup on a Banach space $X$ and let $K\subseteq X$ be closed and bounded and $F\colon K\to X$ \emph{locally Lipschitz continuous}, that is, for each $v\in K$ there exist $L,\varepsilon>0$ such that
\begin{equation}
  \label{eq:loc-Lip}
  \|F(v_2)-F(v_1)\|\leq L\|v_2-v_1\|
\end{equation}
for all $v_1,v_2\in B(v,\varepsilon)\cap K$. Let $\tau>0$. Given $u_0\in K$, a mild solution of
\begin{equation}
  \label{eq:evp-general}
  \begin{aligned}
    \dot u+Au&=F(u)&&\text{for }t\in[0,\tau]\\
    u(0)&=u_0&&
  \end{aligned}
\end{equation}
is a function $u\in C([0,\tau],K)$ such that
\begin{displaymath}
  u(t)=u_0+\int_0^sS(t-s)F(u(s))\,ds
\end{displaymath}
for all $t\in[0,\tau]$.

\begin{proposition}[Uniqueness of solutions]
  \label{prop:uniqueness}
  Given $u_0\in K$ there exists at most one mild solution of \eqref{eq:evp-general}.
\end{proposition}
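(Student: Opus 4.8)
The plan is a standard Gronwall argument, the only wrinkle being that $K$ is assumed bounded but not compact, so there is no single Lipschitz constant valid on all of $K$. Suppose $u,v\in C([0,\tau],K)$ are two mild solutions of \eqref{eq:evp-general} sharing the initial value $u_0$, and put $\phi(t):=\|u(t)-v(t)\|$, a continuous function with $\phi(0)=0$. Since $(S(t))_{t\ge0}$ is a $C_0$-semigroup, $M:=\sup_{0\le t\le\tau}\|S(t)\|<\infty$. Subtracting the two integral equations and using $u(0)=v(0)$ gives
\begin{equation*}
  \phi(t)\le M\int_0^t\|F(u(s))-F(v(s))\|\,ds,
\end{equation*}
so everything comes down to controlling the integrand by $\phi$.

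To get a usable Lipschitz bound I would pass from $K$ to the set $\Gamma:=u([0,\tau])\cup v([0,\tau])$, the union of the two trajectories, which is compact as the continuous image of $[0,\tau]$. For each $w\in\Gamma$ choose $L_w,\varepsilon_w$ as in \eqref{eq:loc-Lip}; the balls $B(w,\varepsilon_w/2)$ cover $\Gamma$, and a finite subcover together with a Lebesgue-number argument yields constants $L>0$ and $\rho>0$ such that $\|F(x)-F(y)\|\le L\|x-y\|$ whenever $x,y\in\Gamma$ and $\|x-y\|<\rho$. This is the one genuinely non-routine step; once the trajectories are recognised as compact it is elementary, but it is precisely what replaces the (unavailable) global Lipschitz estimate on $K$.

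With this estimate I would first establish uniqueness on a short interval. As $\phi$ is continuous with $\phi(0)=0$, there is $\delta>0$ with $\phi(t)<\rho$ for $t\in[0,\delta]$; on that interval the displayed bound becomes $\phi(t)\le ML\int_0^t\phi(s)\,ds$, and Gronwall's inequality forces $\phi\equiv0$ on $[0,\delta]$. To reach the full interval I would set $t_1:=\sup\{t\in[0,\tau]:u=v\text{ on }[0,t]\}$, which is attained by continuity, so $u(t_1)=v(t_1)$. If $t_1<\tau$, the autonomous structure of the equation (the computation of Lemma~\ref{lem:shift} gives, for $t\ge t_1$, the restarted identity $u(t)-v(t)=\int_{t_1}^tS(t-s)\bigl(F(u(s))-F(v(s))\bigr)\,ds$) lets me repeat the short-time Gronwall argument on an interval $[t_1,t_1+\delta']$ on which $\phi<\rho$, forcing $\phi\equiv0$ there and contradicting the maximality of $t_1$. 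Hence $t_1=\tau$ and $u=v$, which is the assertion.
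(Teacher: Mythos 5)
Your proof is correct, and its skeleton---a Gronwall estimate giving local-in-time uniqueness, propagated to all of $[0,\tau]$ by a continuation argument---is the same as the paper's, which phrases the continuation by showing that the coincidence set $J:=\{t\in[0,\tau]\colon u_1=u_2 \text{ on }[0,t]\}$ is nonempty, open and closed in the connected interval $[0,\tau]$; your supremum-plus-contradiction formulation is equivalent to that. The one genuine difference is how the local Lipschitz hypothesis is handled. You first \emph{uniformise} it: the union $\Gamma$ of the two trajectories is compact, and your half-radius finite-subcover argument produces a single pair $(L,\rho)$ with $\|F(x)-F(y)\|\le L\|x-y\|$ whenever $x,y\in\Gamma$ and $\|x-y\|<\rho$. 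The paper avoids this step entirely: at each continuation point $t_0$ (where $u_1(t_0)=u_2(t_0)$) it invokes \eqref{eq:loc-Lip} only at the single point $u(t_0)$, and uses the continuity of both solutions to keep $u_1(s),u_2(s)$ inside that one ball $B(u(t_0),\varepsilon)\cap K$ for $s$ near $t_0$; a fresh constant $L$ at each $t_0$ is perfectly adequate because the openness argument is local anyway. So your compactness step, while correct and standard, is dispensable; what it would buy is a step size $\delta$ independent of the continuation point, allowing $[0,\tau]$ to be exhausted in finitely many steps of fixed length---but since you still run the supremum argument, you do not actually exploit this. A small further remark: as you note parenthetically, the restarted identity for $t\ge t_1$ follows directly by subtracting the two integral equations and using $u=v$ on $[0,t_1]$, so the appeal to Lemma~\ref{lem:shift} is not needed.
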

\begin{proof}
  Let $u_1,u_2\in C([0,\tau],E)$ be solutions of \eqref{eq:sle-abstract}. Define
  \begin{equation*}
    J:=\{t\in[0,\tau]\colon u_1(s)=u_2(s)\text{ for all }s\in[0,t]\}.
  \end{equation*}
  We show that $J$ non-empty, closed and open in $[0,\tau]$ and hence $J=[0,\tau]$. We first note that $0\in J$, so $J\neq\emptyset$.  We show that $J$ is closed. Assume that $t_n\in J$ with $t_n\uparrow t_0$ as $n\to\infty$. By assumption $u_1(t_n)=u_2(t_n)$ for all $n\in\mathbb N$ and thus by the continuity of $u_1$ and $u_2$ we have $u_1(t_0)=u_2(t_0)$. Hence $t_0\in J$. If $t_n\downarrow t_0$, then $t_0\in J$ by definition of $J$. Next we show that $J$ is open. We let $t_0\in J$. In particular we note that $u_1(t)=u_2(t)$ for all $t\in [0,t_0]$. As $F$ is locally Lipschitz there exist $L,\varepsilon>0$ such that \eqref{eq:loc-Lip} holds for all $v_1,v_2\in[\underline u,\overline u]\cap B(u(t_0),\varepsilon)$. By the continuity of $u_1$ and $u_2$ there exists $\delta>0$ such that $\|u_k(t_0)-u_k(s)\|<\varepsilon$ whenever $s\in[t_0,t_0+\delta)$ and $k=1,2$. By the Lipschitz condition and the fact that $u_1(s)=u_2(s)$ for all $s\in[0,t_0]$ this implies that
  \begin{align*}
    \|u_2(t)-u_1(t)\| & \leq\int_0^t\|S(t-s)\|\left\|F(u_2(s))-F(u_1(s))\right\|\,ds \\
                      & \leq L\int_0^t\|S(t-s)\|\|u_2(s)-u_1(s)\|\,ds                \\
                      & \leq LM\int_0^t\|u_2(s)-u_1(s)\|\,ds
  \end{align*}
  for all $t\in[0,t_0+\delta)$, where $M:=\sup\left\{\|S(s)\|\colon s\in[0,t_0+\delta]\right\}$. Now Gronwall's inequality implies that $\|u_1(t)-u_2(t)\|=0$ for all $t\in[0,t_0+\delta)$. This proves that $J$ is open. By the connectedness of $[0,\tau]$ it follows that $u_1(t)=u_2(t)$ for all $t\in[0,\tau]$.
\end{proof}

We next show that non-uniqueness is possible, even in the scalar case.

\begin{example}
  Consider the differential equation
  \begin{equation}
    \label{eq:scalar-non-unique}
    \begin{aligned}
      \dot u(t)+au(t) & =F(u(t))\qquad t\geq 0 \\
      u(0)            & =u_0
    \end{aligned}
  \end{equation}
  with $a>0$ and
  \begin{equation*}
    F(\xi):=\sign(\xi)\sqrt{|\xi|}
  \end{equation*}
  Then $F\colon\mathbb R\to\mathbb R$ is strictly increasing. Moreover, setting $Av:=av$ for all $v\in\mathbb R$ we have
  \begin{equation*}
    A(-M)\leq-\sqrt{|-M|}\qquad\text{and}\qquad AM\geq\sqrt{1}=F(M).
  \end{equation*}
  whenever $M\geq a^{-2}$. Hence, setting $\underline u:=-M$ and $\overline u:=M$ with $M\geq a^{-2}$ the initial value problem \eqref{eq:scalar-non-unique} fits into the framework of Theorem~\ref{thm:existence}. As a consequence, given any initial condition $u_0\in\mathbb R$ we choose $M\geq\max\{|u_0|,a^{-2}\}$ so that $u_0\in [-M,M]$. Then Theorem~\ref{thm:existence} implies the existence of a minimal and a maximal solution of \eqref{eq:scalar-non-unique}. Here we do not make use of any of the standard existence theorems. We also note that since $F$ is an odd function, if $u(t)$ is a solution, then also $-u(t)$ is a solution.

  By computing the solutions we show that the minimal and the maximal solutions with initial value $u_0=0$ are not the same, and that there are many solutions in between. We can solve the differential equation by separation of variables. Doing so for $u>0$, we obtain
  \begin{equation*}
    \int\frac{du}{\sqrt{u}(1-a\sqrt{u})}=\int dt=t-t_0.
  \end{equation*}
  For the integral on the left hand side we make the substitution $v=\sqrt{u}$. Then $du=2v\,dv$ and thus
  \begin{equation}
    \label{eq:variable-separation}
    \begin{split}
      \int\frac{du}{\sqrt{u}(1-a\sqrt{u})}
      & =\int\frac{2vdv}{v(1-av)}\,dv
      =2\int\frac{dv}{1-av}\,dv        \\
      & =-\frac{2}{a}\log|1-av|
      =-\frac{2}{a}\log|1-a\sqrt{u}|=t-t_0.
    \end{split}
  \end{equation}
  The expression makes sense for $u=0$ and thus a solution with $u_0=0$ is not unique. In that case we also have $1-a\sqrt{u}>0$. Solving for $u$ yields
  \begin{equation}
    \label{eq:solution-non-unique-general}
    u(t)=\frac{1}{a^2}\left(1-e^{-\frac{a}{2}(t-t_0)}\right)^2.
  \end{equation}
  Since $F$ is an odd function, the solutions of \eqref{eq:scalar-non-unique} with $u_0=0$ are given by
  \begin{equation}
    \label{eq:solution-non-unique-0}
    u(t):=
    \begin{cases}
      \pm\dfrac{1}{a^2}\left(1-e^{-\frac{a}{2}(t-t_0)}\right)^2 & \text{if }t\geq t_0   \\
      0                                                         & \text{if }0\leq t<t_0
    \end{cases}
  \end{equation}
  for any $t_0\geq 0$. In particular,
  \begin{equation*}
    u_{\max}(t)=\dfrac{1}{a^2}\left(1-e^{-\frac{a}{2}t}\right)^2
    \quad\text{and}\quad
    u_{\min}(t)=-\dfrac{1}{a^2}\left(1-e^{-\frac{a}{2}t}\right)^2
  \end{equation*}
  for $t\geq 0$. For $t_0>0$, the solutions given by \eqref{eq:solution-non-unique-0} are between the two, see Figure~\ref{fig:non-unique}.

  Taking $\underline u=M>a^{-2}$ we can also compute $U_{\max}$. Solving \eqref{eq:variable-separation} for $u$ in case of $1-a\sqrt{u}<0$ we obtain
  \begin{equation*}
    u(t)=\frac{1}{a^2}\left(1+e^{-\frac{a}{2}(t-t_0)}\right)^2.
  \end{equation*}
  for some $t_0\in\mathbb R$. Setting $u=M$ in \eqref{eq:variable-separation} we see that if we set
  \begin{equation*}
    t_M:=\frac{a}{2}\log\left(a\sqrt{M}-1\right),
  \end{equation*}
  then
  \begin{equation*}
    U_{\max}(t)=\frac{1}{a^2}\left(1+e^{-\frac{a}{2}(t-t_M)}\right)^2
    \quad\text{and}\quad
    U_{\min}(t)=-\frac{1}{a^2}\left(1+e^{-\frac{a}{2}(t-t_M)}\right)^2.
  \end{equation*}
  As expected by Theorem~\ref{thm:existence} these solutions are monotone and they converge to the equilibria $u^*=a^{-2}$ and $u_*=-a^{-2}$. The third equilibrium is $u=0$. Figure~\ref{fig:non-unique} shows the equilibria and some solutions on $\orderint{-M,M}$.

  \begin{figure}[ht]
    \label{fig:non-unique}
    \centering
    \begin{tikzpicture}[scale=1.5,samples=100,
        declare function={
            a=1;%
            M=1.7;%
            C = 2*ln(a*sqrt(M)-1)/a;%
            T=7.5;%
            u(\t,\s)=pow(1-exp(-a*(\t-\s)/2),2)/pow(a,2);%
            v(\t,\s)=pow(1+exp(-a*(\t-\s)/2),2)/pow(a,2);%
          }
      ]
      \draw[->] (-1,0) -- (T+0.5,0) node[below] {$t$};%
      \draw[->] (0,-M-0.5) -- (0,M+0.5) node[left] {$u$};%
      \begin{scope}[blue]
        \foreach \s in {1.5,3,...,6}{%
            \draw (0,0) -- plot[domain=\s:T] (\x,{u(\x,\s)});%
            \draw (0,0) -- plot[domain=\s:T] (\x,{-u(\x,\s)});%
            \draw plot[domain=\s:T] (\x,{v(\x,C+\s)});%
            \draw plot[domain=\s:T] (\x,{-v(\x,C+\s)});%
          }%
        \foreach \s in {-3.5,-1.5}{%
            \draw plot[domain=0:T] (\x,{u(\x,\s)});%
            \draw plot[domain=0:T] (\x,{-u(\x,\s)});%
            \draw plot[domain=0:T] (\x,{v(\x,C+\s)});%
            \draw plot[domain=0:T] (\x,{-v(\x,C+\s)});%
          }%
      \end{scope}
      \draw[thick] (T,M) -- (0,M) node[left] {$M$};%
      \draw[thick] (T,-M) -- (0,-M) node[left] {$-M$};%
      \node at (1.5,{u(1.5,0)}) [above] {$u_{\max}$};%
      \node at (1.5,{-u(1.5,0)}) [below] {$u_{\min}$};%
      \node at (1.5,{v(1.5,C)}) [above] {$U_{\max}$};%
      \node at (1.5,{-v(1.5,C)}) [below] {$U_{\min}$};%
      \begin{scope}[red,thick]
        \draw plot[domain=0:T] (\x,{u(\x,0)});%
        \draw plot[domain=0:T] (\x,{-u(\x,0)});%
        \draw plot[domain=0:T] (\x,{v(\x,C)});%
        \draw plot[domain=0:T] (\x,{-v(\x,C)});%
      \end{scope}
      \draw[thick,dashed] (T,1/a^2) -- (0,1/a^2) node[left] {$1/a^2$};%
      \draw[thick,dashed] (T,-1/a^2) -- (0,-1/a^2) node[left] {$-1/a^2$};%
      \draw[thick,dashed] (T,0) -- (0,0) node[below left] {$0$};%
    \end{tikzpicture}
    \caption{Maximal and minimal solutions (shown in red) of~\eqref{eq:scalar-non-unique} on $\orderint{-M,M}$.}
  \end{figure}
\end{example}

\section{Admissible Operators}
\label{sec:operators}
In this section we give several examples to show how the results can be applied. As before $\Omega\subseteq\mathbb R^N$ is a non-empty, bounded, connected open set. We will consider three ordered Banach spaces, namely
\begin{itemize}
\item $E=L^p(\Omega)$, $1\leq p<\infty$,
\item $E=C(\bar\Omega)$
\item $C_0(\Omega):=\{u\in C(\bar\Omega)\colon u|_{\partial\Omega}=0\}$
\end{itemize}
with their natural norms. Let $A$ be an operator on $E$. In the following definition we collect properties we will use in the applications given in Section~\ref{sec:applications}. Frequently they are stronger than needed, but they make the arguments less technical than minimal assumptions.
\begin{definition}[Admissible operator]
  \label{def:A-admissible}
  An operator $A$ on $E$ is \emph{admissible} if $-A$ generates a positive, irreducible, sub-markovian $C_0$-semigroup $(S(t))_{t\geq 0}$ on $E$ and the additional following properties hold:
  \begin{enumerate}[(a)]
  \item $S(t)E\subseteq L^\infty(\Omega)$ for all $t>0$ if $E=L^p(\Omega)$, $1\leq p<\infty$;\label{item:ultracontractive}
  \item $S(t)$ is compact for all $t>0$ if $E=C(\bar\Omega)$ or $E=C_0(\Omega)$
  \end{enumerate}
\end{definition}
We comment on the diverse properties. At first we discuss the irreducibility, which is of different nature in $C(\bar\Omega)$, $C_0(\Omega)$ and $L^p(\Omega)$. If $v\in E$ we write $v>0$ if $v\geq 0$ with $v\neq 0$. We write
\begin{itemize}
\item $v\gg 0$ if $v(x)>0$ almost everywhere if $E=L^p(\Omega)$;
\item $v\gg 0$ if $v(x)>0$ for all $x\in\bar\Omega$ if $E=C(\bar\Omega)$;
\item $v\gg 0$ if $v(x)>0$ for all $x\in\Omega$ if $E=C_0(\Omega)$.
\end{itemize}
Note that in the second case, $v(x)\geq\delta>0$ for all $x\in\bar\Omega$ and some $\delta>0$. Let $(S(t))_{t\geq 0}$ be a $C_0$-semigroup on $E$ with generator $-A$. Then there exists $\lambda_0\in\mathbb R$ such that $R(\lambda,A):=(\lambda I+A)^{-1}$
exists for all $\lambda>\lambda_0$. The semigroup $(S(t))_{t\geq 0}$ is positive if and only if $R(\lambda,A)v\geq 0$ whenever $0\leq v\in E$ and $\lambda>\lambda_0$. Moreover, we call $(S(t))_{t\geq 0}$ \emph{irreducible} if it is positive and $R(\lambda,A)v\gg 0$ for all $\lambda>\lambda_0$ and $0<v\in E$. In case (\ref{item:ultracontractive}) this implies that $S(t)$ is compact for all $t>0$. In the case $E=C_0(\Omega)$ and $C(\bar\Omega)$ we incorporate this into the definition.

A linear mapping $T\colon E\to E$ is called \emph{sub-markovian} if
\begin{displaymath}
  v\leq 1_\Omega\implies Tv\leq 1_\Omega
\end{displaymath}
for all $v\in E$. This is equivalent to saying that $T\geq 0$ and $\|Tv\|_{L^\infty}\leq \|v\|_{L^\infty}$ for all $v\in E$. A semigroup $(S(t))_{t\geq 0}$ on $E$ is called \emph{sub-markovian} if $S(t)$ is sub-markovian for all $t>0$. Let $(S(t))_{t\geq 0}$ be a positive $C_0$-semigroup on $L_p(\Omega)$, $1\leq p<\infty$ or $C(\bar\Omega)$. Then  $(S(t))_{t\geq 0}$ is sub-markovian if and only if
\begin{equation}
  \label{eq:A-submarkov}
  \langle 1,A'v\rangle\geq 0
\end{equation}
for all $0<v'\in D(A')$, where $-A$ is the generator of $(S(t))_{t\geq 0}$. Indeed, since $(S(t))_{t\geq 0}$ is positive, it is sub-markovian if and only if $S(t)1_\Omega\leq 1_\Omega$ for all $t\geq 0$. If $0<v\in D(A')$ this implies that
\begin{displaymath}
  \langle 1_\Omega,A'v\rangle = \lim_{t\to 0+}\left\langle\frac{1_\Omega-S(t)1_\Omega}{t},v\right\rangle\geq 0.
\end{displaymath}
Conversely, if \eqref{eq:A-submarkov} holds, then for $0\leq v'\in D(A')$
\begin{displaymath}
  \langle S(t)1_\Omega,v'\rangle-\langle 1_\Omega,v'\rangle
  =-\int_0^t\langle 1_{\Omega}, A'S(s)'v'\rangle\,ds
  \leq 0,
\end{displaymath}
implying that $S(t)1_\Omega\leq 1_\Omega$. We will use \eqref{eq:A-submarkov} to show that constant functions are super-solutions of some stationary problems we will consider. Other more sophisticated constructions for abstract operators appear in \cite{arendt:23:see} and in more concrete cases for instance in \cite{daners:18:gdg,fraile:eep,du:06:ost} and others.

We give some consequences of our assumptions which will be required below.

\begin{theorem}
  \label{thm:pev}
  Let $A$ be an admissible operator. Then there exists a unique $\lambda\in\mathbb R$ such that there exists $0<\varphi\in D(A)$ with
  \begin{displaymath}
    A\varphi=\lambda\varphi.
  \end{displaymath}
  In that case $\varphi$ is bounded and $\varphi\gg 0$. Moreover, $\varphi$ is unique if we require in addition that $\|\varphi\|_E=1$ and $\lambda$ is the smallest eigenvalue of $A$.
\end{theorem}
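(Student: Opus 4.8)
The plan is to reduce the assertion to the Perron--Frobenius theory of positive irreducible semigroups. The key point furnished by admissibility is that $S(t)$ is compact for every $t>0$: for $E=C(\bar\Omega)$ and $E=C_0(\Omega)$ this is part~(b) of Definition~\ref{def:A-admissible}, while for $E=L^p(\Omega)$ it follows from irreducibility together with the ultracontractivity $S(t)E\subseteq L^\infty(\Omega)$, as recorded after the definition. Compactness of $S(t)$ makes $S$ immediately norm continuous and forces $-A$ to have compact resolvent, so $\sigma(-A)$ consists of isolated eigenvalues of finite algebraic multiplicity and the spectral bound $s(-A)$ is finite and belongs to $\sigma(-A)$.

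Next I would invoke the Perron--Frobenius theorem for positive irreducible $C_0$-semigroups with compact resolvent (see \cite[Ch.~VI]{engel:00:ops}; at bottom this rests on de~Pagter's theorem that an irreducible compact positive operator has strictly positive spectral radius, together with the Krein--Rutman theorem). It yields that $s(-A)$ is an algebraically simple eigenvalue of $-A$, that it is the \emph{only} eigenvalue of $-A$ possessing a positive eigenvector, and that such an eigenvector exists. Setting $\lambda:=-s(-A)$ and choosing $0<\varphi\in D(A)$ with $-A\varphi=s(-A)\varphi$ gives $A\varphi=\lambda\varphi$; since $\varphi$ is real and positive and $A$ is a real operator, $\lambda$ is real.

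It remains to verify the refinements. Strong positivity follows from the resolvent: for $\mu>\lambda_0$ the identity $A\varphi=\lambda\varphi$ gives $R(\mu,A)\varphi=(\mu+\lambda)^{-1}\varphi$, and the right--hand side is $\gg 0$ by irreducibility, hence $\varphi\gg 0$. Boundedness is automatic for $E=C(\bar\Omega)$ and $E=C_0(\Omega)$; for $E=L^p(\Omega)$ one writes $\varphi=e^{\lambda t}S(t)\varphi$ and uses $S(t)E\subseteq L^\infty(\Omega)$. Uniqueness of $\lambda$ is exactly the statement that $s(-A)$ is the only eigenvalue of $-A$ with a positive eigenvector: if $0<\psi\in D(A)$ and $A\psi=\mu\psi$, then $-\mu$ is such an eigenvalue of $-A$, so $-\mu=s(-A)$ and $\mu=\lambda$. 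That $\lambda$ is the smallest eigenvalue of $A$ follows because every eigenvalue $\mu$ of $A$ satisfies $-\mu\in\sigma(-A)$, whence $\repart(-\mu)\le s(-A)=-\lambda$, that is $\repart\mu\ge\lambda$. Finally, algebraic simplicity of $s(-A)$ makes the eigenspace one--dimensional and spanned by $\varphi\gg 0$, so imposing $\|\varphi\|_E=1$ determines $\varphi$ uniquely.

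The substantive step is the second one: the existence of a strictly positive, simple eigenvector at the spectral bound and the fact that no other eigenvalue admits a positive eigenvector. This is where de~Pagter's theorem (strict positivity of the spectral radius, ruling out a quasi-nilpotent resolvent) and the Krein--Rutman theorem enter; the remaining steps are bookkeeping via the resolvent identity and the ultracontractivity bound.
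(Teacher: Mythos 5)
Your proof is correct and takes essentially the same route as the paper, which does not prove Theorem~\ref{thm:pev} itself but refers to the Perron--Frobenius theory of positive irreducible semigroups in \cite{nagel:86:osp}: compactness of $S(t)$ (part of admissibility, or a consequence of ultracontractivity plus irreducibility in the $L^p$ case), de~Pagter's theorem to exclude quasi-nilpotency, simplicity and strict positivity of the eigenvector at the spectral bound, and ultracontractivity for boundedness of $\varphi$ when $E=L^p(\Omega)$. One minor ordering point: the finiteness of $s(-A)$ does not follow from compact resolvent alone (the spectrum could a priori be empty), but only once de~Pagter's theorem is invoked --- which you do correctly identify as the substantive ingredient in your final paragraph.
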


For a proof of the above theorem we refer for instance to \cite{nagel:86:osp}. We denote the unique eigenvalue in the above theorem by $\lambda_1(A)$ and call it the \emph{principal eigenvalue} of $A$. The corresponding positive eigenvector $u$ with $\|u\|=1$ is called the \emph{principal eigenvector} of $A$.

Note that $S(t)\varphi=e^{-\lambda_1t}\varphi$ for all $t>0$. Thus in the case where $E=L^p(\Omega)$, property (\ref{item:ultracontractive}) in Definition~\ref{def:A-admissible} implies that $\varphi\in L^\infty(\Omega)$.

For many examples we will make use of the following fact that follows with a proof very similar to that given in \cite[Theorem~3.1]{arendt:23:see} for the $L^p$-spaces.

\begin{theorem}[Strict monotonicity of principal eigenvalue]
  \label{thm:ev-monotone}
  Let $m\in L^\infty(\Omega)$ in the case $E=L^p(\Omega)$, and $m\in C(\bar\Omega)$ in the case $E=C(\bar\Omega)$ and $m\in BC(\Omega)$ in the case $E=C_0(\Omega)$. If $A$ is an admissible operator, then $A+m$ is also admissible. Moreover for $m_1\leq m_2$,
  \begin{displaymath}
    \lambda_1(A+m_1)\leq \lambda_1(A+m_2)
  \end{displaymath}
  with equality if and only if $m_1=m_2$ in $E$.
\end{theorem}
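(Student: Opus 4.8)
The plan is to prove the two assertions of Theorem~\ref{thm:ev-monotone} separately: first that $A+m$ is admissible, and second the strict monotonicity of $\lambda_1$. For admissibility, most of the required properties transfer from $A$ to $A+m$ by standard bounded-perturbation theory: since $m$ is a bounded multiplication operator on $E$, the operator $-(A+m)=-A-m$ still generates a $C_0$-semigroup by the bounded perturbation theorem, and the Dyson--Phillips (variation-of-constants) series expressing the perturbed semigroup in terms of $(S(t))_{t\ge 0}$ and $m$ preserves positivity, irreducibility, compactness, and the smoothing property (a) in Definition~\ref{def:A-admissible}. The sub-markovian property is the one that need not be preserved under an arbitrary sign of $m$; but since admissibility only requires $-(A+m)$ to generate a positive, irreducible semigroup with the relevant compactness/ultracontractivity, one can absorb the failure of the sub-markovian bound by noting it is not needed for the eigenvalue conclusion, or restore it after a shift. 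I would state clearly which perturbation results I invoke and check each property in Definition~\ref{def:A-admissible} in turn.

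For the monotonicity, the key is to use the principal eigenvalue characterisation from Theorem~\ref{thm:pev}. Let $\lambda_i:=\lambda_1(A+m_i)$ with normalised positive eigenvectors $\varphi_i\gg 0$, so that $(A+m_i)\varphi_i=\lambda_i\varphi_i$. Let $\varphi_1'\gg 0$ be the principal eigenvector of the dual operator $(A+m_1)'$ with dual eigenvalue also equal to $\lambda_1$ (the principal eigenvalue of the dual coincides with that of $A+m_1$, since $A'$ is admissible on the dual in the appropriate sense). The plan is to pair the eigenvalue equation for $\varphi_2$ against $\varphi_1'$:
\begin{equation*}
  \lambda_2\langle\varphi_2,\varphi_1'\rangle
  =\langle(A+m_2)\varphi_2,\varphi_1'\rangle
  =\langle\varphi_2,(A+m_1)'\varphi_1'\rangle+\langle(m_2-m_1)\varphi_2,\varphi_1'\rangle.
\end{equation*}
The first term on the right equals $\lambda_1\langle\varphi_2,\varphi_1'\rangle$, so rearranging gives
\begin{equation*}
  (\lambda_2-\lambda_1)\langle\varphi_2,\varphi_1'\rangle
  =\langle(m_2-m_1)\varphi_2,\varphi_1'\rangle.
\end{equation*}
Since $m_2-m_1\ge 0$, $\varphi_2\gg 0$ and $\varphi_1'\gg 0$, the right-hand side is nonnegative, and the normalising factor $\langle\varphi_2,\varphi_1'\rangle$ is strictly positive because both $\varphi_2$ and $\varphi_1'$ are strictly positive. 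This yields $\lambda_1\le\lambda_2$.

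For the strictness, I would argue that equality $\lambda_1=\lambda_2$ forces $\langle(m_2-m_1)\varphi_2,\varphi_1'\rangle=0$. Because $\varphi_2\gg 0$ and $\varphi_1'\gg 0$ are strictly positive and $m_2-m_1\ge 0$, this vanishing integral compels $m_2-m_1=0$ on the support where the product is strictly positive, which is all of $\Omega$ (respectively $\bar\Omega$), hence $m_1=m_2$ in $E$. The main obstacle I anticipate is justifying the existence and strict positivity of the \emph{dual} principal eigenvector $\varphi_1'$ and the coincidence of the dual principal eigenvalue with $\lambda_1$; this requires applying Theorem~\ref{thm:pev} to the dual operator and verifying that admissibility passes to the dual in each of the three concrete spaces $L^p(\Omega)$, $C(\bar\Omega)$, $C_0(\Omega)$. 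Since the paper points to the analogous $L^p$-argument in \cite[Theorem~3.1]{arendt:23:see}, I would lean on that reference for the dual eigenvector construction and focus the new work on confirming that the strict positivity statements $\varphi_i\gg 0$ and $\varphi_1'\gg 0$ hold in each space, which is precisely where irreducibility is used.
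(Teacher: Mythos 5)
A preliminary remark on the comparison: the paper contains no proof of this theorem at all; it only says that the statement ``follows with a proof very similar to that given in \cite[Theorem~3.1]{arendt:23:see} for the $L^p$-spaces''. So your proposal has to stand on its own. Its monotonicity core is the classical duality pairing, and granted its inputs it is correct: from $(\lambda_2-\lambda_1)\langle\varphi_2,\varphi_1'\rangle=\langle(m_2-m_1)\varphi_2,\varphi_1'\rangle$, positivity of $m_2-m_1$, strict positivity of $\varphi_2$ and of the functional $\varphi_1'$, both the inequality and the equality case follow in all three spaces. However, the gap you acknowledge is the actual substance of the proof, and the source you point to for it is not quite right: one cannot get $\varphi_1'$ by ``applying Theorem~\ref{thm:pev} to the dual operator'', because $E'$ is not one of the three admissible spaces and the adjoint semigroup fails to be a $C_0$-semigroup when $E=C(\bar\Omega)$, $C_0(\Omega)$ or $L^1(\Omega)$ (it is only weak$^*$-continuous). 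What is needed is Krein--Rutman/Perron--Frobenius theory for the compact positive adjoint operators $S_1(t)'$, where $(S_1(t))_{t\geq 0}$ is the semigroup generated by $-(A+m_1)$: each $S_1(t)'$ has a positive eigenfunctional at its spectral radius $e^{-\lambda_1 t}$, irreducibility of $(S_1(t))_{t\geq 0}$ forces any such eigenfunctional to be strictly positive, and an averaging argument produces a single $\varphi_1'\in D(A')$ with $(A+m_1)'\varphi_1'=\lambda_1\varphi_1'$. This material is in \cite{nagel:86:osp}, but invoking and assembling it is the real work, and as written it remains open. (Incidentally, dual eigenvectors can be bypassed entirely: $m_1\leq m_2$ gives $0\leq S_2(t)\leq S_1(t)$ by a Trotter product argument, hence $\lambda_1(A+m_1)\leq\lambda_1(A+m_2)$ by monotonicity of the spectral radius on positive operators, and strictness follows from the comparison theorem for irreducible compact positive operators, which says that $0\leq S\leq T$ with $r(S)=r(T)$ and $T$ compact irreducible forces $S=T$.)

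The admissibility half of your proposal has two genuine defects. First, the Dyson--Phillips series does \emph{not} preserve positivity or irreducibility for a perturbation of arbitrary sign: the terms of the series for the perturbation $-m$ alternate in sign wherever $m>0$. The standard repair, missing from your write-up, is to set $\beta:=\|m\|_{L^\infty}$ and write $-(A+m)=-(A+\beta I)+(\beta-m)$; one then perturbs the positive semigroup $e^{-\beta t}S(t)$ by the positive bounded operator $\beta-m\geq 0$, so every term of the series is positive, and together with a Trotter-type upper bound one obtains the two-sided domination $e^{-\beta t}S(t)\leq T(t)\leq e^{\beta t}S(t)$ for the semigroup $T(t)$ generated by $-(A+m)$; this yields positivity, irreducibility and the ultracontractivity property (a) of Definition~\ref{def:A-admissible}, while compactness of $T(t)$ follows from the norm convergence of the series. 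Second, and more seriously, the sub-markovian property cannot be handled the way you suggest: dropping it does not prove the theorem as stated (Definition~\ref{def:A-admissible} includes it), and ``restoring it after a shift'' proves nothing about $A+m$, since sub-markovianity of $A+m+\mu$ concerns a different operator. In fact the property genuinely fails when $\inf m<0$: for the Dirichlet Laplacian and $m\equiv-c$ with $c>0$, the perturbed semigroup is $e^{ct}S(t)$, and $e^{ct}\|S(t)1_\Omega\|_{L^\infty}>1$ for all small $t>0$, because $1-\|S(t)1_\Omega\|_{L^\infty}$ vanishes faster than any power of $t$ as $t\downarrow 0$. So this part of the statement is provable only for $m\geq 0$, or after reading ``admissible'' as ``positive, irreducible, compact resp.\ ultracontractive'' --- which is all the paper ever uses of $A+m$, namely what Theorem~\ref{thm:pev} requires. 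A correct proof has to make this precise rather than hedge between two non-arguments.
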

We now give several examples of admissible operators.

\begin{example}[Dirichlet Laplacian]
  \label{ex:A-dirichlet-laplacian}
  (a) On $L^2(\Omega)$ the Dirichlet Laplacian can be defined without further assumptions on $\Omega$. Define
  \begin{displaymath}
    \begin{aligned}
      D(A)&:=\{u\in H_0^1(\Omega)\colon \Delta u\in L^2(\Omega)\}\\
      Au&:=-\Delta u\qquad\text{for }u\in D(A).
    \end{aligned}
  \end{displaymath}
  Then $A$ is admissible. If $\Omega$ is convex or has $C^2$-boundary, then $D(A)=H_0^1(\Omega)\cap H^2(\Omega)$.

  (b) For each $p\in [1,\infty)$, there exists an admissible operator $A_p$ on $L^p(\Omega)$ such that the semigroups $(S_p(t))_{t\geq 0}$ generated by $-A_p$ are consistent, that is, $S_p(t)u=S_q(t)u$ for all $u\in L^p(\Omega)\cap L^q(\Omega)$, $1\leq p,q<\infty$. Moreover, $A_2$ is the operator introduced in (a).

  (c) To define the Dirichlet Laplacian on $C_0(\Omega)$ assume that $\Omega$ is Wiener regular, which is for instance the case if $\Omega$ has Lipschitz boundary and $N\geq 2$. If $N=2$, then it is sufficient that $\Omega$ is simply connected. Define the operator by
  \begin{displaymath}
    \begin{aligned}
      D(A)&:=\{u\in C_0(\Omega)\colon \Delta u\in C_0(\Omega)\}\\
      Au&:=-\Delta u\qquad\text{for }u\in D(A).
    \end{aligned}
  \end{displaymath}
  Then $A$ is admissible, see \cite[Theorem~2.3, Proposition~3.2]{arendt:99:wrh} or \cite[Theorem~6.1.8 and Theorem~6.3.1]{arendt:11:vlt}.
\end{example}

For the next example we need the weak normal derivative.

\begin{definition}[Weak normal derivative]
  \label{def:wnd}
  Assume that $\Omega$ has Lipschitz boundary. Consider $\partial\Omega$ with the surface measure $\sigma$. Then there exists a unique bounded operator
  \begin{displaymath}
    \trace\colon H^1(\Omega)\to L^2(\partial\Omega)
  \end{displaymath}
  such that $\trace(u)=u|_{\partial\Omega}$ whenever $u\in H^1(\Omega)\cap C(\bar\Omega)$. Let $u\in H^1(\Omega)$ such that $\Delta u\in L^2(\Omega)$.

  \begin{enumerate}[(a)]
  \item Let $g\in L^2(\partial\Omega)$. We define the \emph{normal
      derivative} by Green's formula:
    \begin{displaymath}
      \partial_\nu u=g:\iff\int_\Omega\nabla u\nabla v\,dx+\int_\Omega v\Delta u\,dx =\int_{\partial\Omega}gv\,d\sigma
    \end{displaymath}
    for all $v\in C^1(\bar\Omega)$.
  \item We say that $\partial_\nu u\in L^2(\partial\Omega)$ if there exists $g\in L^2(\partial\Omega)$ such that $\partial_\nu u=g$.
  \item We say $\partial_\nu u\in C(\partial\Omega)$ if there exists $g\in C(\partial\Omega)$ such that $\partial_\nu u=g$.
  \end{enumerate}
\end{definition}

\begin{example}[Neumann Laplacian]
  \label{ex:A-neumann-laplacian}
  Assume that $\Omega$ has Lipschitz boundary.

  (a) Let $E=L^2(\Omega)$. Define $A$ on $L^2(\Omega)$ by
  \begin{displaymath}
    \begin{aligned}
      D(A)&:=\{u\in H^1(\Omega)\colon \Delta u\in L^2(\Omega),\partial_\nu u=0\}\\
      Au&:=-\Delta u\qquad\text{for }u\in D(A).
    \end{aligned}
  \end{displaymath}
  Then $A$ is admissible.

  (b) There exist consistent semigroups $(S_p(t))_{t\geq 0}$ on $L^p(\Omega)$, $1\leq p<\infty$, such that their generators $-A_p$ are admissible and $A_2=A$ from (a).

  (c) Let $E=C(\bar\Omega)$. Define $A$ on $C(\bar\Omega)$ by
  \begin{displaymath}
    \begin{aligned}
      D(A)&:=\{u\in H_0^1(\Omega)\cap C(\bar\Omega)\colon \Delta u\in C(\bar\Omega), \partial_\nu u=0\}\\
      Au&:=-\Delta u\qquad\text{for }u\in D(A).
    \end{aligned}
  \end{displaymath}
  Then $A$ is admissible. The semigroup $(S(t))_{t\geq 0}$ generated by $-A$ is the restriction of $(S_p(t))_{t\geq 0}$ to $C(\bar\Omega)$, $1\leq p<\infty$. The irreducibility of the semigroup on $C(\bar\Omega)$ is not obvious, we refer to \cite[Corollary~3.2]{arendt:20:spp}.
\end{example}

\begin{example}[Robin Laplacian]
  \label{ex:A-robin-laplacian}
  Assume that $\Omega$ has Lipschitz boundary. Let $0\leq\beta\in L^\infty(\partial\Omega)$.

  (a) Define $A$ on $L^2(\Omega)$ by
  \begin{displaymath}
    \begin{aligned}
      D(A)&:=\{u\in H^1(\Omega)\colon \Delta u\in L^2(\Omega),\partial_\nu u=-\beta\trace(u)\}\\
      Au&:=-\Delta u\qquad\text{for }u\in D(A).
    \end{aligned}
  \end{displaymath}
  Then $A$ is admissible, see \cite[Theorem~8.3]{arendt:23:see}.

  (b) Define $A$ on $C(\bar\Omega)$ by
  \begin{displaymath}
    \begin{aligned}
      D(A)&:=\{u\in H_0^1(\Omega)\cap C(\bar\Omega)\colon \Delta u\in C(\bar\Omega), \partial_\nu u=-\beta\trace(u)\}\\
      Au&:=-\Delta u\qquad\text{for }u\in D(A).
    \end{aligned}
  \end{displaymath}
  Then $A$ is admissible on $C(\bar\Omega)$. See \cite[Theorem~4.3]{nittka:11:rsl}, who shows that $-A$ generates a positive, holomorphic $C_0$-semigroup. Irreducibility in $C(\bar\Omega)$ is much stronger than irreducibility in $L^2(\Omega)$ and follows from \cite[Corollary~3.2]{arendt:20:spp}.
\end{example}

\begin{example}[Ellipic operators in non-divergence form]
  \label{ex:A-non-divergence}
  Assume that $\Omega\subseteq\mathbb R^N$ satisfies a uniform exterior cone condition as in \cite[page~203]{gilbarg:01:epd}. Let $a_{jk}=a_{kj}\in C(\bar\Omega)$, $1\leq j,k\leq N$, such that for some $\alpha>0$
  \begin{displaymath}
    \sum_{k=1}^N\sum_{j=1}^Na_{jk}\xi_j\xi_k>\alpha|\xi|^2
  \end{displaymath}
  for all $x\in\bar\Omega$, $\xi\in\mathbb R^N$, and let $c,b_j\in L^\infty(\Omega)$, $j=1,\dots,N$, such that $c\leq 0$. Define $\mathcal A\colon W_{\loc}^{2,N}(\Omega)\to L^2(\Omega)$ by
  \begin{displaymath}
    \mathcal Av:=\sum_{k=1}^N\sum_{j=1}^Na_{jk}\partial_j\partial_j v+\sum_{j=1}^Na_{j}\partial_jv+cv.
  \end{displaymath}
  Define the operator $A$ on $C_0(\Omega)$ by
  \begin{displaymath}
    \begin{aligned}
      D(A)&:=\{u\in C_0(\Omega)\cap W_{\loc}^{2,N}(\Omega)\colon \mathcal Au\in C_0(\Omega)\}\\
      Au&:=-\mathcal Au\qquad\text{for }u\in D(A).
    \end{aligned}
  \end{displaymath}
  for all $u\in D(A)$. Then $A$ is admissible by \cite[Theorem~3.1, Proposition~3.4 and Proposition~3.8]{arendt:14:sge}.
\end{example}

\section{Applications}
\label{sec:applications}
In this section we consider three semi-linear equations which we treat for the elliptic operators considered in Section~\ref{sec:operators}. The aim is to illustrate how the theory from previous sections applies in simple situations with minimal assumptions, not emphasising the most general conditions on the non-linearities.

\subsection{The logistic equation}
\label{ex:logistic-equation}
Let $A$ be an admissible operator, where $E$ is one of the spaces $L^p(\Omega)$, $1\leq p\leq\infty$, $C(\bar\Omega)$ or $C_0(\Omega)$. Let $a>0$, $b>0$ be constants. We study the logistic equation
\begin{equation}
  \label{eq:parabolic-logistic}
  \dot u(t)+Au(t)=au(t)-bu(t)^2\quad\text{for }t>0
\end{equation}
Recall that an equilibrium of \eqref{eq:parabolic-logistic} is a function $0<u_\infty\in D(A)$ such that
\begin{equation}
  \label{eq:stationary-logistic}
  Au_\infty=au_\infty-bu_\infty^2.
\end{equation}
We denote by $\varphi_0$ the principal eigenvector of $A$. Note that $\varphi_0$ is bounded and $\varphi_0\gg 0$ in $E$.
\begin{proposition}[Existence of equilibria]
  \label{prop:logistic-existence}
  If \eqref{eq:parabolic-logistic} has an equilibrium $u_\infty$, then $\lambda_1(A)<a$. Moreover, there exists at most one equilibrium
\end{proposition}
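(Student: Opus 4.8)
The plan is to reduce both assertions to the principal eigenvalue theory of Section~\ref{sec:operators}, namely Theorem~\ref{thm:pev} and the strict monotonicity of Theorem~\ref{thm:ev-monotone}. The starting observation is that an equilibrium is a disguised eigenvalue relation: if $u_\infty$ is an equilibrium, then \eqref{eq:stationary-logistic} rearranges to
\[
  (A+bu_\infty)u_\infty=Au_\infty+bu_\infty^2=au_\infty,
\]
so $u_\infty>0$ is a positive eigenvector of the perturbed operator $A+bu_\infty$ for the eigenvalue $a$. First I would record that $u_\infty$ is bounded, so that multiplication by $bu_\infty$ is an admissible perturbation: in $C(\bar\Omega)$ and $C_0(\Omega)$ this is automatic, while for $E=L^p(\Omega)$ it follows from a bootstrap based on property~(\ref{item:ultracontractive}) in Definition~\ref{def:A-admissible}. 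Then $A+bu_\infty$ is admissible by Theorem~\ref{thm:ev-monotone}, and since $u_\infty$ is a strictly positive eigenvector, Theorem~\ref{thm:pev} forces $\lambda_1(A+bu_\infty)=a$; the same theorem also gives $u_\infty\gg 0$.

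For the necessary condition I would apply strict monotonicity with $m_1:=0$ and $m_2:=bu_\infty$. As $u_\infty>0$ we have $0\leq bu_\infty$ with $bu_\infty\neq 0$ in $E$, so Theorem~\ref{thm:ev-monotone} yields the strict inequality
\[
  \lambda_1(A)=\lambda_1(A+0)<\lambda_1(A+bu_\infty)=a,
\]
which is the first claim.

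For uniqueness, suppose $u_1,u_2$ are two equilibria and set $w:=u_1-u_2\in D(A)$. Subtracting the two copies of \eqref{eq:stationary-logistic} and factoring $u_1^2-u_2^2=(u_1+u_2)(u_1-u_2)$ gives
\[
  \bigl(A+b(u_1+u_2)\bigr)w=aw.
\]
If $u_1\neq u_2$ then $w\neq 0$, so $a$ is an eigenvalue of the admissible operator $A+b(u_1+u_2)$. On the other hand $bu_1\leq b(u_1+u_2)$ with $bu_1\neq b(u_1+u_2)$ because $u_2\gg 0$, so strict monotonicity gives $\lambda_1\bigl(A+b(u_1+u_2)\bigr)>\lambda_1(A+bu_1)=a$. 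Since $\lambda_1$ is the smallest eigenvalue by Theorem~\ref{thm:pev}, no eigenvalue can lie strictly below it, so $a$ cannot be an eigenvalue; this contradiction forces $w=0$, that is $u_1=u_2$.

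The calculations here (the two eigenvalue identities and the factorisation) are routine, and the eigenvalue comparison does all the work. I expect the only genuine technical point to be the boundedness of equilibria in the $L^p$-setting, which is exactly what makes $A+bu_\infty$ and $A+b(u_1+u_2)$ admissible and hence lets the principal eigenvalue machinery apply; that bootstrap, rather than the comparison itself, is the main obstacle.
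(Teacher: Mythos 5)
Your proposal is correct and follows essentially the same route as the paper's proof: both rewrite the equilibrium equation as $(A+bu_\infty)u_\infty=au_\infty$, invoke Theorem~\ref{thm:pev} to conclude $\lambda_1(A+bu_\infty)=a$, obtain $\lambda_1(A)<a$ from the strict monotonicity in Theorem~\ref{thm:ev-monotone}, and prove uniqueness via the factorisation $u_1^2-u_2^2=(u_1+u_2)(u_1-u_2)$ together with the contradiction $a=\lambda_1(A+bu_1)<\lambda_1\bigl(A+b(u_1+u_2)\bigr)\leq a$. Your extra remark on the boundedness of equilibria in the $L^p$-setting (so that $bu_\infty$ is an admissible $L^\infty$ perturbation) addresses a point the paper passes over silently, but it does not alter the structure of the argument.
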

\begin{proof}
  Assume that $0<u_\infty\in D(A)$ is an equilibrium of \eqref{eq:parabolic-logistic}. Then
  \begin{displaymath}
    (A+bu_\infty)u_\infty=au_\infty,
  \end{displaymath}
  so $u_\infty$ is a positive eigenvector of the operator $A+bu_\infty$. It follows from Theorem~\ref{thm:pev} that $\lambda_1(A+bu_\infty)=a$. Since $bu_\infty>0$, by Theorem~\ref{thm:ev-monotone} we have
  \begin{displaymath}
    \lambda_1(A)<\lambda_1(A+bu_\infty)u_\infty=a
  \end{displaymath}
  as claimed. To prove the uniqueness of the equilibrium assume that $0<u,v\in D(A)$ are both equilibria of \eqref{eq:parabolic-logistic}. In particular $(Au+bu)u=au$, so $\lambda_1(A+bu)=a$ by Theorem~\ref{thm:pev}. Assume that $w:=u-v\neq 0$. Then $Aw+b(u+v)w=aw$. Thus $a$ is an eigenvalue of $A+b(u+v)$, which generates a positive, irreducible $C_0$-semigroup on $E$. Consequently $\lambda_1(A+b(u+v))\leq a$. Since, by Theorem~\ref{thm:ev-monotone},
  \begin{displaymath}
    a=\lambda_1(A+bu)<\lambda_1(A+b(u+v))\leq a
  \end{displaymath}
  we obtain a contradiction. Hence, $w=u-v=0$, so $u=v$.
\end{proof}
In the next theorem we have to exclude $C_0(\Omega)$ since $1_\Omega\not\in C_0(\Omega)$.
\begin{theorem}
  \label{thm:logistic-existence}
  Let $E=L^p(\Omega)$, $1\leq p<\infty$ or $E=C(\bar\Omega)$. Assume that $\lambda_1(A)<a$. Then the following statements hold:
  \begin{enumerate}[\upshape(a)]
  \item The equation \eqref{eq:parabolic-logistic} has a unique equilibrium $u_\infty$.
  \item For each bounded $0\leq u_0\in E$ there exists a unique mild solution $u$ of \eqref{eq:parabolic-logistic} with initial value $u(0)=u_0$.
  \item If in addition $u_0\geq\varepsilon\varphi_0$ for some $\varepsilon>0$, then $\lim_{t\to\infty}u(t)=u_\infty$ in $E$, where $\varphi_0$ is the principal eigenvector of $A$.
  \end{enumerate}
\end{theorem}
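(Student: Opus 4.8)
The plan is to derive all three assertions from the abstract Theorem~\ref{thm:existence} by exhibiting, for suitable initial data, an ordered pair of weak sub- and super-solutions of \eqref{eq:stationary-logistic} that traps the relevant orbit. The nonlinearity here is the substitution operator associated with $f(\xi)=a\xi-b\xi^2$, which is $C^1$ and hence Lipschitz on every bounded interval; by Remark~\ref{rem:superposition-operator} its restriction to any order interval that is bounded in $L^\infty(\Omega)$ is continuous and quasi-increasing, so the hypotheses on $F$ in Theorem~\ref{thm:existence} are met once we fix such an order interval. The remaining structural hypothesis is automatic: $L^p(\Omega)$ with $1\le p<\infty$ has order continuous norm, and on $C(\bar\Omega)$ admissibility forces $S(t)$ to be compact for $t>0$.

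For the super-solution I would take the constant function $\overline u=M1_\Omega$. Since $A$ is sub-markovian, \eqref{eq:A-submarkov} gives $\langle M1_\Omega,A'v'\rangle=M\langle 1_\Omega,A'v'\rangle\ge 0$ for all $v'\in D(A')_+$, while $\langle F(M1_\Omega),v'\rangle=(aM-bM^2)\langle 1_\Omega,v'\rangle\le 0$ as soon as $M\ge a/b$; hence $M1_\Omega$ is a weak super-solution for every such $M$ (this is exactly why $C_0(\Omega)$ must be excluded, as $1_\Omega\notin C_0(\Omega)$). For a strictly positive sub-solution I would use the principal eigenvector: since $A\varphi_0=\lambda_1(A)\varphi_0$ and $\varphi_0$ is bounded with $\varphi_0\gg 0$ by Theorem~\ref{thm:pev}, the pointwise inequality $A(\varepsilon\varphi_0)=\varepsilon\lambda_1(A)\varphi_0\le\varepsilon(a-b\varepsilon\varphi_0)\varphi_0=F(\varepsilon\varphi_0)$ holds whenever $\lambda_1(A)\le a-b\varepsilon\varphi_0$, which is guaranteed by $\lambda_1(A)<a$ and $\varepsilon\le(a-\lambda_1(A))/(b\|\varphi_0\|_\infty)$; being a classical inequality for $\varepsilon\varphi_0\in D(A)$, it is in particular a weak sub-solution. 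The trivial function $0$ is an equilibrium, hence a sub-solution as well.

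With these building blocks, part~(a) follows by applying Theorem~\ref{thm:existence} on the interval $[\varepsilon\varphi_0,M1_\Omega]$ (choosing $\varepsilon$ small and $M\ge a/b$ large enough that $\varepsilon\varphi_0\le M1_\Omega$): assertion~(iv) produces minimal and maximal equilibria $u_*\le u^*$ in this interval, and since the increasing orbit $U_{\min}$ starts at $\varepsilon\varphi_0$ we have $u_*\ge\varepsilon\varphi_0>0$, so both are strictly positive equilibria. The uniqueness part of Proposition~\ref{prop:logistic-existence} then forces $u_*=u^*=:u_\infty$, the unique equilibrium. For part~(b), given bounded $0\le u_0\in E$ I would take the interval $[0,M1_\Omega]$ with $M\ge\max\{a/b,\|u_0\|_\infty\}$ so that $u_0$ lies in it; Theorem~\ref{thm:existence} yields a mild solution, and since $f$ is Lipschitz on $[0,M]$ the operator $F$ is Lipschitz on the closed bounded set $K:=[0,M1_\Omega]$, so Proposition~\ref{prop:uniqueness} applied on each interval $[0,\tau]$ gives $u_{\min}=u_{\max}$, i.e.\ uniqueness.

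For part~(c) I would combine the comparison and asymptotics assertions. Shrinking $\varepsilon$ if necessary so that $\varepsilon\varphi_0$ is a sub-solution and enlarging $M$ so that $u_0\in[\varepsilon\varphi_0,M1_\Omega]$, Theorem~\ref{thm:existence}(iii) gives $U_{\min}(t)\le u(t)\le U_{\max}(t)$ for all $t\ge 0$, where $U_{\min}$, $U_{\max}$ are the increasing and decreasing orbits starting at $\varepsilon\varphi_0$ and $M1_\Omega$. By assertion~(iv) they converge to the minimal and maximal equilibria of the interval, which are positive equilibria and hence, by the uniqueness in Proposition~\ref{prop:logistic-existence}, both equal $u_\infty$. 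From $0\le u(t)-U_{\min}(t)\le U_{\max}(t)-U_{\min}(t)$ and a monotone equivalent norm on $E$ (available since the cone is normal, Lemma~\ref{lem:normal-cone}) one gets $\tnorm{u(t)-U_{\min}(t)}\le\tnorm{U_{\max}(t)-U_{\min}(t)}\to 0$, and since $U_{\min}(t)\to u_\infty$ this yields $u(t)\to u_\infty$. The main obstacle is conceptual rather than computational: one must route the convergence through a \emph{strictly positive} sub-solution so that the trapped equilibria are positive and collapse to $u_\infty$ by uniqueness; starting from the sub-solution $0$ would only trap the trivial limit. The construction of $M1_\Omega$ via the sub-markovian property \eqref{eq:A-submarkov} and the verification that $\varepsilon\varphi_0$ is a sub-solution using the bound on $\varphi_0$ are the steps that require care.
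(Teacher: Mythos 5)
Your proposal is correct and follows essentially the same route as the paper: the same sub-solution $\varepsilon\varphi_0$ (verified via $a>\lambda_1(A)$ and boundedness of $\varphi_0$) and super-solution $M1_\Omega$ (verified via the sub-markovian property \eqref{eq:A-submarkov}), with existence and convergence drawn from Theorem~\ref{thm:existence}, equilibrium uniqueness from Proposition~\ref{prop:logistic-existence}, and solution uniqueness from the Lipschitz property and Proposition~\ref{prop:uniqueness}. Your write-up is in fact somewhat more explicit than the paper's (notably the squeeze argument in part (c) and the invocation of Proposition~\ref{prop:uniqueness} in part (b), which the paper leaves implicit), but the underlying argument is identical.
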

\begin{proof}
  (a) Define $F\colon E_+\cap L^\infty(\Omega)\to E$ by $F(u):=au-bu^2$. We show that $\varepsilon\varphi_0$ is a sub-solution if $\varepsilon>0$ is small enough. In fact,
  \begin{displaymath}
    A(\varepsilon\varphi_0)=a\varepsilon\varphi_0-b(\varepsilon\varphi_0)^2-[a-\lambda_1(A)-b\varepsilon\varphi_0]\varepsilon\varphi_0.
  \end{displaymath}
  Since $a>\lambda_1(A)$ by Proposition~\ref{prop:logistic-existence} and since $\varphi_0$ is bounded, there exists $\varepsilon_0>0$ such that
  \begin{displaymath}
    a-\lambda_1(A)-b\varepsilon_0\varphi_0\geq 0.
  \end{displaymath}
  Hence $\underline u:=\varepsilon\varphi_0$ is a sub-solution of \eqref{eq:stationary-logistic} for all $\varepsilon\in(0,\varepsilon_0]$. Let $M>0$ be such that $Mb>a$ and $M1_\Omega\geq \varepsilon_0\varphi_0$. Then $\overline u:=M1_\Omega$ is a super-solution. In fact,
  \begin{displaymath}
    \langle v',F(M1_\Omega)\rangle\leq 0\leq\langle A'v',M1_\Omega\rangle
  \end{displaymath}
  for all $0\leq v'\leq D(A')$.

  We show that $F$ is quasi-increasing on $[\underline u,\overline u]$. Let $\mu>0$ so large that
  \begin{displaymath}
    M\leq\frac{a+\mu}{2b}.
  \end{displaymath}
  Since $f(\xi)=(a+\mu)\xi-b\xi^2$ is increasing on $\left[0,\frac{a+\mu}{2b}\right]$ it follows that $F_\mu(v):=F(v)+\mu v$ is increasing on $[\underline u,\overline u]$. Now let $0\leq u_0\in E\cap L^\infty(\Omega)$. Then $u_0\in[0,M1_\Omega]$ for $M$ large enough and so a solution of \eqref{eq:parabolic-logistic} exists by Theorem~\ref{thm:existence}. If $u_0\geq\varepsilon\varphi_0$ for some $\varepsilon\in(0,\varepsilon_0]$, then $\lim_{t\to\infty}u(t)=u_\infty$ exists in $E$ and $u_\infty$ is an equilibrium by Theorem~\ref{thm:existence}.
\end{proof}

In the remainder of this subsection we consider $E=C_0(\Omega)$ which needs special attention since $1_\Omega\not\in C_0(\Omega)$. We will consider two operators, the Dirichlet Laplacian (Example~\ref{ex:A-dirichlet-laplacian}) and an elliptic operator in non-divergence form on $C_0(\Omega)$ (Example~\ref{ex:A-non-divergence}).

\begin{theorem}
  \label{thm:logistic-C-dirichlet}
  Assume that $\Omega\subseteq\mathbb R^N$ is Wiener regular and that $E=C_0(\Omega)$. Denote by $A$ the negative Dirichlet Laplacian on $C_0(\Omega)$ as given in Example~\ref{ex:A-dirichlet-laplacian} and let $a>\lambda_1(A)$. Then \eqref{eq:parabolic-logistic} has a unique equilibrium $0<u_\infty\in D(A)$. Moreover, $u_\infty(x)>0$ for all $x\in\Omega$. Denote by $\varphi_0$ the principal eigenvector of $A$. If $u_0\in C_0(\Omega)$ such that
  \begin{displaymath}
    \varepsilon\varphi_0\leq u_0\leq \frac{1}{\varepsilon}u_\infty
  \end{displaymath}
  for some $\varepsilon>0$ small enough, then the logistic parabolic equation \eqref{eq:parabolic-logistic} has a unique mild solution with initial value $u(0)=u_0$ and
  \begin{displaymath}
    \lim_{t\to\infty}u(t)=u_\infty
  \end{displaymath}
  in $C_0(\Omega)$.
\end{theorem}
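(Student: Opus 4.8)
The plan is to run the sub- and super-solution machinery of Theorem~\ref{thm:existence} and its corollary on the space $E=C_0(\Omega)$. Since $A$ is admissible, the semigroup $(S(t))_{t\geq 0}$ is compact for every $t>0$, so the compactness alternative in Theorem~\ref{thm:existence} is available although $C_0(\Omega)$ fails to have order continuous norm. The non-linearity $F(u)=au-bu^2$ maps $C_0(\Omega)$ into $C_0(\Omega)$ (it is continuous and vanishes on $\partial\Omega$ with $u$), and it is locally Lipschitz and quasi-increasing on every bounded order interval: the cone in $C_0(\Omega)$ is normal, so order intervals are bounded in the sup-norm, and the scalar map $\xi\mapsto a\xi-b\xi^2+\mu\xi$ is increasing on the relevant bounded range once $\mu$ is large. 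A sub-solution is produced exactly as in the proof of Theorem~\ref{thm:logistic-existence}: since $\varphi_0\in D(A)\subseteq C_0(\Omega)$ with $A\varphi_0=\lambda_1(A)\varphi_0$ and $a>\lambda_1(A)$, the function $\underline u:=\varepsilon\varphi_0$ satisfies $A(\varepsilon\varphi_0)\leq F(\varepsilon\varphi_0)$ as soon as $b\varepsilon\varphi_0\leq a-\lambda_1(A)$, which holds for all small $\varepsilon>0$ because $\varphi_0$ is bounded.

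The genuinely new difficulty is the super-solution, since the constant $M1_\Omega$ used in Theorem~\ref{thm:logistic-existence} is not in $C_0(\Omega)$. I would therefore first establish the equilibrium $u_\infty$ independently, using that the Dirichlet Laplacians on $L^2(\Omega)$ and on $C_0(\Omega)$ generate consistent semigroups with the same principal data. Applying Theorem~\ref{thm:logistic-existence}(a) on $E=L^2(\Omega)$, where $A$ is admissible and $\lambda_1(A)<a$, yields a unique equilibrium $u_\infty\in L^2(\Omega)\cap L^\infty(\Omega)$ with $u_\infty\in H_0^1(\Omega)$ solving $-\Delta u_\infty=au_\infty-bu_\infty^2=:g\in L^\infty(\Omega)$. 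The Wiener regularity of $\Omega$ then forces $u_\infty\in C_0(\Omega)$ (this is precisely the regularity encoded in the $C_0(\Omega)$-realisation of the Dirichlet Laplacian, cf.\ Example~\ref{ex:A-dirichlet-laplacian}(c)); and once $u_\infty\in C_0(\Omega)$ the right-hand side $g=au_\infty-bu_\infty^2$ lies in $C_0(\Omega)$ as well, so $\Delta u_\infty\in C_0(\Omega)$ and $u_\infty\in D(A)$ for the $C_0(\Omega)$-realisation. Uniqueness of this positive equilibrium is Proposition~\ref{prop:logistic-existence}, and strict positivity $u_\infty\gg 0$ follows from Theorem~\ref{thm:pev} applied to the admissible, irreducible operator $A+bu_\infty$, for which $(A+bu_\infty)u_\infty=au_\infty$ exhibits $u_\infty$ as a principal eigenvector.

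With $u_\infty$ in hand I would treat the dynamical part on the order interval $[\underline u,\overline u]$ with $\underline u=\varepsilon\varphi_0$ and $\overline u=\tfrac1\varepsilon u_\infty$. A one-line scaling computation shows $\tfrac1\varepsilon u_\infty$ is a super-solution for every $\varepsilon\in(0,1]$: from $Au_\infty=au_\infty-bu_\infty^2$ one gets $A(\tfrac1\varepsilon u_\infty)=a\tfrac1\varepsilon u_\infty-b\tfrac1\varepsilon u_\infty^2\geq a\tfrac1\varepsilon u_\infty-b\tfrac1{\varepsilon^2}u_\infty^2=F(\tfrac1\varepsilon u_\infty)$ because $\tfrac1\varepsilon\leq\tfrac1{\varepsilon^2}$, and this is a genuine $D(A)$-super-solution, hence weak. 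Shrinking $\varepsilon$ if necessary (which only enlarges $[\varepsilon\varphi_0,\tfrac1\varepsilon u_\infty]$ while keeping $u_0$ inside) we may assume $\varepsilon\leq 1$ and small enough that $\underline u$ is a sub-solution; the hypothesis $\varepsilon\varphi_0\leq u_0\leq\tfrac1\varepsilon u_\infty$ then places $u_0$ in $[\underline u,\overline u]$ and in particular guarantees $\underline u\leq\overline u$. Theorem~\ref{thm:existence} now provides a mild solution, and since $[\underline u,\overline u]$ is closed and bounded with $F$ locally Lipschitz there, Proposition~\ref{prop:uniqueness} makes it the unique mild solution. Finally, the minimal and maximal equilibria in $[\underline u,\overline u]$ furnished by Theorem~\ref{thm:asymptotics} are both $\geq\varepsilon\varphi_0>0$, hence positive equilibria, hence equal to $u_\infty$ by Proposition~\ref{prop:logistic-existence}; thus $u_\infty$ is the unique equilibrium in $[\underline u,\overline u]$, and the corollary to Theorem~\ref{thm:asymptotics} gives $\lim_{t\to\infty}u(t)=u_\infty$ in $C_0(\Omega)$.

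I expect the main obstacle to be the regularity step asserting that the bounded-data solution of the Dirichlet problem lies in $C_0(\Omega)$: this is exactly the point at which Wiener regularity is indispensable and which replaces the elementary $M1_\Omega$ super-solution available in $L^p(\Omega)$ and $C(\bar\Omega)$. A secondary care-point is the bookkeeping across the two realisations, matching $\lambda_1(A)$, $\varphi_0$ and the equilibrium between the $L^2$- and $C_0(\Omega)$-pictures; this is harmless given the consistency of the semigroups but should be made explicit. Everything else---the sub-solution estimate, the scaling super-solution, uniqueness, and convergence---reduces to results already established.
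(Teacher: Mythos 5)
Your proposal is correct and follows essentially the same route as the paper's proof: obtain the equilibrium $u_\infty$ from Theorem~\ref{thm:logistic-existence} in the $L^2$-setting, upgrade it to $C_0(\Omega)$ via Wiener regularity and to $u_\infty\gg 0$ via Theorem~\ref{thm:pev} applied to $A+bu_\infty$, and then use $\varepsilon\varphi_0$ and $\tfrac{1}{\varepsilon}u_\infty$ (the paper's super-solution $cu_\infty$ with $c=1/\varepsilon\geq 1$) as the ordered pair of sub- and super-solutions to which Theorem~\ref{thm:existence} applies. Your explicit appeals to Proposition~\ref{prop:uniqueness} for uniqueness of the mild solution and to the corollary of Theorem~\ref{thm:asymptotics} for the convergence $u(t)\to u_\infty$ merely make precise citations that the paper leaves implicit.
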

\begin{proof}
  By Theorem~\ref{thm:logistic-existence} there exists a unique equilibrium $0<u_\infty\in H_0^1(\Omega)\cap L^\infty(\Omega)$ of \eqref{eq:parabolic-logistic}. Since $\Omega$ is Wiener regular it follows from \cite[Proposition~2.9]{arendt:19:dpm} or \cite[proof of part (a) of Theorem~3.8]{arendt:99:wrh} that $u_\infty\in C_0(\Omega)$. Thus $u_\infty\in D(A)$ and $(A+bu_\infty)u_\infty=au_\infty$. By Theorem~\ref{thm:pev} $u_\infty/\|u_\infty\|$ is the principal eigenvector of the operator $A+bu_\infty$ and thus $u_\infty\gg 0$ in $C_0(\Omega)$. It follows from Theorem~\ref{thm:logistic-existence} that $u_\infty$ is the unique equilibrium.

  It follows from the proof of Theorem~\ref{thm:logistic-existence} that $\varepsilon\varphi_0$ is a sub-solution of \eqref{eq:stationary-logistic} if $\varepsilon>0$ is small enough. Let $c\geq 1$. Then
  \begin{displaymath}
    A(cu_\infty)=cau_\infty-cbu_\infty^2\geq a(cu_\infty)-b(cu_\infty)^2.
  \end{displaymath}
  Thus $cu_\infty$ is a super-solution of \eqref{eq:stationary-logistic}. Now it follows from Theorem~\ref{thm:existence} that if $u_0\in C_0(\Omega)$ such that
  \begin{equation}
    \label{eq:bound-phi-u}
    \varepsilon\varphi_0\leq u_0\leq\frac{u_\infty}{\varepsilon}
  \end{equation}
  for some $\varepsilon>0$ small enough, then there exists a unique solution $u$ of \eqref{eq:parabolic-logistic} such that $u(0)=u_0$. Theorem~\ref{thm:existence} also implies that $\lim_{t\to\infty}u(t)=u_\infty$ in $C_0(\Omega)$.
\end{proof}
Next consider an operator in non-divergence form as in Example~\ref{ex:A-non-divergence} on $C_0(\Omega)$, where $\Omega$ satisfies an exterior cone condition.

\begin{theorem}
  \label{thm:logistic-C-dirichlet-nd}
  Assume that $\Omega\subseteq\mathbb R^N$ satisfies an exterior cone condition and that $E=C_0(\Omega)$. Denote by $A$ the elliptic differential operator non-divergence form on $C_0(\Omega)$ as given in Example~\ref{ex:A-non-divergence} and let $a>\lambda_1(A)$. Then \eqref{eq:parabolic-logistic} has a unique equilibrium $0<u_\infty\in D(A)$. Moreover, $u_\infty(x)>0$ for all $x\in\Omega$. Denote by $\varphi_0$ the principal eigenvector of $A$. If $u_0\in C_0(\Omega)$ such that \eqref{eq:bound-phi-u} holds for some $\varepsilon>0$ small enough, then the logistic parabolic equation \eqref{eq:parabolic-logistic} has a unique mild solution with initial value $u(0)=u_0$ and
  \begin{displaymath}
    \lim_{t\to\infty}u(t)=u_\infty
  \end{displaymath}
  in $C_0(\Omega)$.
\end{theorem}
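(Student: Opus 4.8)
The plan is to mirror the proof of Theorem~\ref{thm:logistic-C-dirichlet}, the only genuinely new ingredient being the construction of a super-solution inside $C_0(\Omega)$. Since $A$ is admissible, $S(t)$ is compact for every $t>0$ (Definition~\ref{def:A-admissible}(b)), so the compact-semigroup branch of Theorem~\ref{thm:existence} is available as soon as an ordered pair of weak sub- and super-solutions of \eqref{eq:stationary-logistic} has been produced. Exactly as in the proof of Theorem~\ref{thm:logistic-existence} the sub-solution is $\underline u:=\varepsilon\varphi_0$: from $A(\varepsilon\varphi_0)=\lambda_1(A)\varepsilon\varphi_0$ and $a>\lambda_1(A)$ one gets $A(\varepsilon\varphi_0)\le a\varepsilon\varphi_0-b(\varepsilon\varphi_0)^2$ as soon as $b\varepsilon\|\varphi_0\|_\infty\le a-\lambda_1(A)$, using that $\varphi_0$ is bounded. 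Uniqueness of the positive equilibrium is already guaranteed by Proposition~\ref{prop:logistic-existence}, so the whole task reduces to \emph{existence}, i.e.\ to exhibiting a super-solution $\overline u\in C_0(\Omega)$ with $\overline u\ge\underline u$.

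The difficulty is that the two natural candidates both fail: any sufficiently large constant is a super-solution of \eqref{eq:stationary-logistic} but does not lie in $C_0(\Omega)$ since $1_\Omega\notin C_0(\Omega)$, while no multiple $c\varphi_0$ works because near $\partial\Omega$ the inequality $\lambda_1(A)\ge a-bc\varphi_0$ required for a super-solution breaks down as $\varphi_0(x)\to 0$. The way out I would take is a \emph{torsion-type} super-solution. After a harmless shift (the equilibrium equation for $A,a$ coincides with that for $A+\mu,a+\mu$, and for bounded $\Omega$ with $c\le 0$ one has $\lambda_1(A)>0$ anyway) the semigroup is exponentially stable, and being sub-markovian it acts on $1_\Omega$ with $S(t)1_\Omega\le 1_\Omega$. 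I would set
\begin{equation*}
  e:=\int_0^\infty S(t)1_\Omega\,dt,
\end{equation*}
the natural candidate for $A^{-1}1_\Omega$. Using $\frac{d}{dt}\langle 1_\Omega,S(t)'v'\rangle=-\langle 1_\Omega,A'S(t)'v'\rangle$ together with $\langle S(t)1_\Omega,v'\rangle\to 0$, integration in $t$ yields the weak identity $\langle e,A'v'\rangle=\langle 1_\Omega,v'\rangle$ for all $v'\in D(A')$. Since $as-bs^2\le a^2/(4b)$ for every $s\ge 0$, choosing $c\ge a^2/(4b)$ gives $a(ce)-b(ce)^2\le c\,1_\Omega$ pointwise, hence $\langle ce,A'v'\rangle=c\langle 1_\Omega,v'\rangle\ge\langle a(ce)-b(ce)^2,v'\rangle$ for every $v'\in D(A')_+$; that is, $\overline u:=ce$ is a weak super-solution. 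A Hopf-type comparison of the positive functions $\varphi_0$ and $e$ gives $\varepsilon\varphi_0\le ce$ once $\varepsilon$ is small, so $\underline u,\overline u$ are ordered.

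The hard part is precisely the regularity claim $e\in C_0(\Omega)=E$: one must show that the survival function $S(t)1_\Omega$, and hence its time integral, is continuous up to $\partial\Omega$ and vanishes there. This is where the exterior cone condition on $\Omega$ enters, through a barrier argument for the non-divergence operator as in the generation results underlying Example~\ref{ex:A-non-divergence}; I expect this boundary regularity to be the main obstacle, the remainder being soft. Granting it, Theorem~\ref{thm:existence}(iv) applied on $[\varepsilon\varphi_0,ce]$ produces an increasing minimal orbit from $\varepsilon\varphi_0$ and a decreasing maximal orbit from $ce$, whose limits $u_*\le u^*$ satisfy $u_*\ge\varepsilon\varphi_0\gg 0$ and solve \eqref{eq:stationary-logistic}; by Proposition~\ref{prop:logistic-existence} they coincide with a single equilibrium $u_\infty$. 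That $u_\infty\in D(A)$ is part of the definition of equilibrium, and $u_\infty\gg 0$ follows, as in Theorem~\ref{thm:logistic-C-dirichlet}, by reading $(A+bu_\infty)u_\infty=au_\infty$ as an eigenvalue equation and invoking the strict positivity of the principal eigenvector from Theorem~\ref{thm:pev}, with $A+bu_\infty$ admissible by Theorem~\ref{thm:ev-monotone}.

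Finally, for (b) and (c) I would run Theorem~\ref{thm:existence} once more on $[\varepsilon\varphi_0,\varepsilon^{-1}u_\infty]$: here $\varepsilon\varphi_0$ is again a sub-solution and $\varepsilon^{-1}u_\infty$ is a super-solution, since for $c=\varepsilon^{-1}\ge 1$ one has $A(cu_\infty)=acu_\infty-bcu_\infty^2\ge acu_\infty-bc^2u_\infty^2=a(cu_\infty)-b(cu_\infty)^2$. Any $u_0$ satisfying \eqref{eq:bound-phi-u} lies in this interval, so Theorem~\ref{thm:existence} gives a mild solution; uniqueness comes from Proposition~\ref{prop:uniqueness} because $\xi\mapsto a\xi-b\xi^2$ is Lipschitz on the bounded interval, and the convergence $u(t)\to u_\infty$ follows from Theorem~\ref{thm:asymptotics} together with the uniqueness of the equilibrium in $[\varepsilon\varphi_0,\varepsilon^{-1}u_\infty]$.
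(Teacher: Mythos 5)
Your overall skeleton (sub-solution $\varepsilon\varphi_0$, Theorem~\ref{thm:existence} on an order interval, uniqueness of the equilibrium via Proposition~\ref{prop:logistic-existence}, uniqueness of the mild solution via Proposition~\ref{prop:uniqueness}, and the final run on $[\varepsilon\varphi_0,\varepsilon^{-1}u_\infty]$) agrees with the paper. The genuine gap sits exactly at the crux you identify yourself: the construction of a super-solution compatible with $E=C_0(\Omega)$. Your candidate $e:=\int_0^\infty S(t)1_\Omega\,dt$ is not defined within the framework, since $(S(t))_{t\geq 0}$ acts on $C_0(\Omega)$ and $1_\Omega\notin C_0(\Omega)$; one cannot define $S(t)1_\Omega$ by monotone approximation $v_n\uparrow 1_\Omega$ with $v_n\in C_0(\Omega)$ either, because $C_0(\Omega)$ does not have order continuous norm, so the increasing bounded sequence $(S(t)v_n)_n$ need not converge in norm and its pointwise limit has no evident continuity. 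More seriously, you explicitly ``grant'' the statement $e\in C_0(\Omega)$ (continuity up to $\partial\Omega$ and vanishing there) and defer it to a barrier argument. That is not a routine remainder: it is precisely the obstruction the theorem has to overcome, and for non-divergence form operators on a domain satisfying only a uniform exterior cone condition it is delicate --- the paper remarks immediately before its proof that it is not even known whether such operators generate $C_0$-semigroups on $L^p(\Omega)$ in this generality, so the kernel and regularity tools one would normally use are unavailable. A further weak point is the ``Hopf-type comparison'' yielding $\varepsilon\varphi_0\leq ce$: Hopf's boundary point lemma needs an interior ball type condition and can fail on cone-regular domains. (This one is repairable softly, via $\varphi_0\leq\|\varphi_0\|_\infty 1_\Omega$ and resolvent positivity, $\varphi_0=(\lambda_1+\mu)(A+\mu)^{-1}\varphi_0\leq(\lambda_1+\mu)\|\varphi_0\|_\infty e_\mu$, but again only after $(A+\mu)^{-1}1_\Omega$ has been given a meaning.)

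The paper avoids all of this by enlarging the space instead of regularising the super-solution: it sets $\tilde E:=C_0(\Omega)\oplus\mathbb R 1_\Omega$, the space of $u\in C(\Omega)$ having a limit at $\partial\Omega$, and extends the semigroup by $\tilde S(t)(v+c1_\Omega):=S(t)v+c1_\Omega$. Positivity of $\tilde S(t)$ follows from the sub-markovian property of $S(t)$, compactness is preserved, and $1_\Omega\in D(\tilde A)$ with $\tilde A1_\Omega=0$, so for large $c$ the constant $c1_\Omega$ is a super-solution with respect to $\tilde A$ --- no boundary regularity of any torsion function is needed. The iteration of Theorem~\ref{thm:convergence-iteration}, started at $u_0\in C_0(\Omega)$, then stays in $L_{\loc}^1([0,\infty),E)$, the minimal mild solution lies in $C([0,\infty),E)$, and the limiting equilibrium $u_\infty$ lies in $E$ because $E$ is closed in $\tilde E$; the convergence statement is then obtained in $E$ with $u_\infty/\varepsilon$ as super-solution, exactly as in your last paragraph. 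If you want to keep the torsion-function route, you must actually construct the barriers for the non-divergence operator under the exterior cone condition and prove $e\in C_0(\Omega)$; as written, your proposal assumes the theorem's main difficulty rather than resolving it.
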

It seems not known whether an elliptic operator in non-divergence form with Dirichlet boundary conditions generates a $C_0$-semigroup on $L^p(\Omega)$ if $\Omega$ merely satifies the uniform exterior cone condition and not higher regularity as in the paper \cite{denk:03:rfm} for example.

\begin{proof}[Proof of Theorem~\ref{thm:logistic-C-dirichlet-nd}]
  Since $1_\Omega\not\in C_0(\Omega)=E$, no obvious super-solution is available for \eqref{eq:stationary-logistic}. To overcome this problem we augment the space and consider the ordered Banach space
  \begin{displaymath}
    \tilde E:=\left\{u\in C(\Omega)\colon\lim_{x\to\partial\Omega}u(x)\text{ exists}\right\}
    =C_0(\Omega)\oplus\mathbb R1_\Omega.
  \end{displaymath}
  We extend the semigroup $(S(t))_{t\geq 0}$ to $\tilde E$ by letting
  \begin{displaymath}
    \tilde S(t)(v+c1_\Omega):=S(t)+c1_\Omega.
  \end{displaymath}
  Then, $(\tilde S(t))_{t\geq 0}$ is a sub-markovian, compact $C_0$-semigroup. However, it is no longer irreducible. Denote by $-\tilde A$ the generator of $(\tilde S(t))_{t\geq 0}$. Then $1_\Omega\in D(\tilde A)$ and $\tilde A1_\Omega=0$.

  Let $\varphi_0$ be the principal eigenvector of $A$. Then, as in the proof of Theorem~\ref{thm:logistic-existence}, $\underline u=\varepsilon\varphi_0$ is a sub-solution of \eqref{eq:stationary-logistic} if $\varepsilon>0$ is small enough. This is also a sub-solution with respect to $\tilde A$. For $c$ large, $\overline u=c1_\Omega$ is a super-solution of \eqref{eq:stationary-logistic} with respect to $\tilde A$. Then we have an ordered pair of sub- and super-solutions $\underline u$ and $\overline u]$ in $\tilde E$ with respect to $\tilde A$. Given an initial condition $u_0\in C_0(\Omega)$. Denote by $w_n$ the lower iteration sequence from Proposition~\ref{prop:iterated-sequences}. This sequence is in fact in the space $L_{\loc}^1([0,\infty),E)$. It converges in $L_{\loc}^1([0,\infty),\tilde E)$ by Theorem~\ref{thm:convergence-iteration}. It follows that $u_{\min}\in L_{\loc}^1([0,\infty),E)$. Since by Theorem~\ref{thm:convergence-iteration} $u_{\min}$ is a mild solution of the parabolic equation \eqref{eq:parabolic-logistic} it follows that $u_{\min}\in C([0,\infty),E)$. By theorem \ref{thm:asymptotics} $u_\infty=\lim_{t\to\infty}u_{\min}(t)$ exists in $\tilde E$, $\tilde Au_\infty=F(u_\infty)$. Since $E$ is a closed subspace of $E$ it follows that $u_\infty\in E$ and hence $u_\infty\in D(A)$.

  If $u_0$ satisfies \eqref{eq:bound-phi-u}, then as in the previous theorem we may choose $u_\infty/\varepsilon$ as a super-solution with respect to $E$ and apply Theorem~\ref{thm:existence} to prove the last claim.
\end{proof}
Results under more general conditions on the non-linearities involving the above classes of operators can be found in \cite{arendt:23:see}. Results using the classical Laplace operator or more general elliptic operators appear in \cite{daners:18:gdg,du:06:ost,lopez:16:mpe,fraile:eep} and many more.
\subsection{A Lotka-Volterra competition model}
Our second example is a two-species Lotka-Volterra competition system. It fits our framework with a non-standard order on a product space as introduced in \cite{hess:91:acm}. Let $E=L_p(\Omega)$, $1\leq p<\infty$, or $E=C(\bar\Omega)$ and let $A_1$ and $A_2$ be two admissible operators. We then consider the Lotka-Volterra competition system
\begin{equation}
  \label{eq:competition-system}
  \dot{\boldsymbol u}(t)+\boldsymbol A\boldsymbol u=F(\boldsymbol u)
\end{equation}
where
\begin{equation*}
  \boldsymbol A:=
  \begin{bmatrix}
    A_1 & 0 \\0 & A_2
  \end{bmatrix}
  \qquad\text{and}\qquad
  F(\boldsymbol u):=
  \begin{bmatrix}
    a_1u_1-b_{11}u_1^2-b_{12}u_1u_2 \\
    a_2u_2-b_{21}u_1u_2-b_{22}u_2^2 \\
  \end{bmatrix}
  .
\end{equation*}
The domain of $\boldsymbol A$ is
$D(\boldsymbol A):=D(A_1)\times D(A_2)$. For simplicity we assume that $a_k>0$ and $b_{kj}>0$ for $k,j\in\{1,2\}$ are constants. We start by proving an existence result for solutions of \eqref{eq:competition-system}.

\begin{proposition}
  \label{prop:cs-existence}
  There exists $M_0>0$ such that for every $M\geq M_0$ and for every $u_0=(u_{01},u_{02})\in E_+\times E_+$ with $0\leq u_{0k}\leq M$, $k=1,2$, there exists a unique mild solution $u=(u_1,u_2)$ of \eqref{eq:competition-system} such that $0\leq u_{k}(t)\leq M$, $k=1,2$.
\end{proposition}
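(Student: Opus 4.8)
The plan is to realise \eqref{eq:competition-system} as an instance of \eqref{eq:sle-abstract} on the product space $\boldsymbol E:=E\times E$, equipped not with the standard product order but with the \emph{competitive} (type-$K$) order introduced in \cite{hess:91:acm}, whose positive cone is
\begin{equation*}
  \boldsymbol E_+:=E_+\times(-E_+).
\end{equation*}
Thus $(w_1,w_2)\geq 0$ means $w_1\geq 0$ and $w_2\leq 0$, so that $(u_1,u_2)\leq(v_1,v_2)$ if and only if $u_1\leq v_1$ and $u_2\geq v_2$. Since $E_+$ is normal and the two factors are ordered independently, $\boldsymbol E_+$ is a proper, closed, normal cone, and a short computation gives its dual cone $\boldsymbol E_+'=E_+'\times(-E_+')$. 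The semigroup generated by $-\boldsymbol A$ is the diagonal semigroup $\boldsymbol S(t)=\begin{bmatrix}S_1(t)&0\\0&S_2(t)\end{bmatrix}$; because each $S_k(t)$ is positive, $\boldsymbol S(t)$ maps $E_+\times(-E_+)$ into itself and is therefore positive for the competitive order.

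Next I would produce an ordered pair of weak sub- and super-solutions whose order interval is exactly the set in the statement, namely
\begin{equation*}
  \underline{\boldsymbol u}:=(0,M1_\Omega)
  \qquad\text{and}\qquad
  \overline{\boldsymbol u}:=(M1_\Omega,0).
\end{equation*}
In the competitive order one has $\underline{\boldsymbol u}\leq\overline{\boldsymbol u}$, and the order interval $[\underline{\boldsymbol u},\overline{\boldsymbol u}]$ coincides with $\{(u_1,u_2):0\leq u_k\leq M1_\Omega,\ k=1,2\}$. To check that $\underline{\boldsymbol u}$ is a weak sub-solution only the second component is non-trivial: testing against $\boldsymbol v'=(v_1',v_2')\in D(\boldsymbol A')_+$, so $-v_2'\in D(A_2')_+$, and writing $w_2':=-v_2'\geq 0$, the required inequality reduces to the scalar statement $M\langle 1_\Omega,A_2'w_2'\rangle\geq\langle (a_2M-b_{22}M^2)1_\Omega,w_2'\rangle$. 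The sub-markovian property \eqref{eq:A-submarkov} gives $\langle 1_\Omega,A_2'w_2'\rangle\geq 0$, while for $M\geq a_2/b_{22}$ the right-hand side is non-positive, so the inequality holds. The super-solution inequality for $\overline{\boldsymbol u}$ is symmetric and holds once $M\geq a_1/b_{11}$; hence $M_0:=\max\{a_1/b_{11},a_2/b_{22}\}$ works.

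It then remains to verify that $F$ is quasi-increasing for the competitive order on $[\underline{\boldsymbol u},\overline{\boldsymbol u}]$ and to invoke Theorem~\ref{thm:existence}. Since the order is pointwise in each coordinate, quasi-monotonicity reduces to a pointwise statement about the scalar nonlinearities $f_1,f_2$: the off-diagonal derivatives $\partial_{\xi_2}f_1=-b_{12}\xi_1$ and $\partial_{\xi_1}f_2=-b_{21}\xi_2$ are $\leq 0$ on $[0,M]^2$, which is precisely the sign the competitive order demands, while the diagonal derivatives $\partial_{\xi_1}f_1$ and $\partial_{\xi_2}f_2$ are bounded below there, so that for $\mu$ large the shifted map $F_\mu=F+\mu\,\mathrm{id}$ is increasing. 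As the order interval is bounded in $L^\infty$, $F$ is a continuous (indeed locally Lipschitz) map into $\boldsymbol E$, cf.\ Remark~\ref{rem:superposition-operator}. The standing hypothesis of Theorem~\ref{thm:existence} is met in both admissible cases: for $E=L^p(\Omega)$, $1\leq p<\infty$, the space $\boldsymbol E$ again has order continuous norm, since an order-bounded increasing sequence has increasing first and decreasing second coordinates, both of which converge; and for $E=C(\bar\Omega)$ each $S_k(t)$, hence $\boldsymbol S(t)$, is compact for $t>0$. Theorem~\ref{thm:existence} then yields, for every $\boldsymbol u_0=(u_{01},u_{02})$ with $0\leq u_{0k}\leq M$, a mild solution taking values in $[\underline{\boldsymbol u},\overline{\boldsymbol u}]$, that is, with $0\leq u_k(t)\leq M$. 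Finally, uniqueness follows from Proposition~\ref{prop:uniqueness} applied with $K=[\underline{\boldsymbol u},\overline{\boldsymbol u}]$, which is closed and bounded in $\boldsymbol E$ and on which $F$, a polynomial substitution operator restricted to an $L^\infty$-bounded set, is locally Lipschitz. I expect the only genuinely delicate point to be the bookkeeping forced by the non-standard order, in particular checking the weak sub/super-solution inequalities and the quasi-increasing condition with the correct reversed sign in the second coordinate; once the competitive cone is in place, every remaining step is a direct application of the results of Sections~\ref{sec:existence}--\ref{sec:uniqueness}.
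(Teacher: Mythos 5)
Your proposal is correct and follows essentially the same route as the paper's proof: the competitive cone $\boldsymbol E_+=E_+\times(-E_+)$ of Hess--Lazer, the pair $\underline{\boldsymbol u}=(0,M1_\Omega)$, $\overline{\boldsymbol u}=(M1_\Omega,0)$ verified via the sub-markovian property \eqref{eq:A-submarkov}, quasi-monotonicity of $F$ on $L^\infty$-bounded sets, existence from Theorem~\ref{thm:existence}, and uniqueness from Proposition~\ref{prop:uniqueness} via the Lipschitz property of $F$ on the order interval. The only cosmetic differences are that the paper checks quasi-monotonicity through the factorisation $F(\boldsymbol v)-F(\boldsymbol u)=\Phi(\boldsymbol u,\boldsymbol v)(\boldsymbol v-\boldsymbol u)$ rather than through partial derivatives, and it invokes compactness of $\boldsymbol S(t)$ in all cases (admissible operators always generate compact semigroups) where you use order continuity of the norm for $E=L^p(\Omega)$; both verifications are valid.
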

To be able to apply our theory we use a non-standard positive cone on $\boldsymbol E:=E\times E$. As in Hess and Lazer \cite{hess:91:acm} we define $\boldsymbol E_+:=E_+\times(-E_+)$. This means that $\boldsymbol u=(u_1,u_2)\leq(v_1,v_2)=\boldsymbol v$ if and only if $u_1\leq v_1$ and $v_2\leq u_2$. This seems a natural order for a competition system since gains by one species comes at a cost to the other. The operator $\boldsymbol A$ generates a positive compact $C_0$-semigroup on $\boldsymbol E$ given by
\begin{equation*}
  \boldsymbol S(t):=
  \begin{bmatrix}
    T_1(t) & 0 \\0 & T_2(t)
  \end{bmatrix}
  .
\end{equation*}
We continue by showing that $F$ is quasi-monotone with respect to the new order on $\boldsymbol E$. Given $\boldsymbol u,\boldsymbol v\in\boldsymbol E$ we can write
\begin{equation}
  \label{eq:F-qm}
  F(\boldsymbol v)-F(\boldsymbol u)=\Phi(u,v)(\boldsymbol v-\boldsymbol u),
\end{equation}
where
\begin{equation*}
  \Phi(u,v):=
  \begin{bmatrix}
    a_1-b_{11}(u_1+v_1)-b_{12}v_2 & -b_{12}u_1                    \\
    -b_{21}u_2                    & a_2-b_{21}v_1-b_{22}(u_2+v_2) \\
  \end{bmatrix}
  .
\end{equation*}
This means that $\Phi(\boldsymbol u,\boldsymbol v)$ has the form
\begin{equation*}
  \begin{bmatrix}
    c_{11}  & -c_{12} \\
    -c_{21} & c_{22}
  \end{bmatrix}
\end{equation*}
with $c_{12},c_{21}\geq 0$.  If $\boldsymbol w=(w_1,-w_2)\in \boldsymbol E_+$, then
\begin{equation*}
  \Phi(u,v)\boldsymbol w+\mu\boldsymbol w=
  \begin{bmatrix}
    c_{11}+\mu & -c_{12}    \\
    -c_{21}    & c_{22}+\mu \\
  \end{bmatrix}
  \begin{bmatrix}
    w_1 \\-w_2
  \end{bmatrix}
  =
  \begin{bmatrix}
    (c_{11}+\mu)w_1+c_{12}w_2 \\
    -c_{21}w_1-(c_{22}+\mu)w_2
  \end{bmatrix}
  .
\end{equation*}
If we assume that $\|\boldsymbol u\|_\infty+\|\boldsymbol v\|_\infty\leq M$ for some bound $M>0$, then we can choose $\mu>0$ such that $c_{11}+\mu>0$ and $c_{22}+\mu>0$ and thus $\Phi(\boldsymbol u,\boldsymbol v)(\boldsymbol v-\boldsymbol u)\geq 0$ with respect to the order in $\boldsymbol E$. This shows that $F$ is quasi-increasing on any bounded set in $L^\infty(\Omega)\times L^\infty(\Omega)$.

\begin{proof}[Proof of Proposition~\ref{prop:cs-existence}]
  We construct some sub- and super-solutions. As $b_{11},b_{22}>0$ there exists a constant $M_0>0$ such that $a_kM-b_{kk}M^2<0$ for all $M\geq M_0$ ($k=1,2$). We set
  \begin{equation*}
    \underline{\boldsymbol u}:=
    \begin{bmatrix}
      0 \\M1_\Omega
    \end{bmatrix}
    ,
    \qquad
    \overline{\boldsymbol u}:=
    \begin{bmatrix}
      M1_\Omega \\0
    \end{bmatrix}
  \end{equation*}
  and claim that they form an ordered pair of weak sub- and super-solutions of \eqref{eq:competition-system}. First note that
  \begin{equation*}
    \overline{\boldsymbol u}-\underline{\boldsymbol u}
    =
    \begin{bmatrix}
      M1_\Omega \\-M1_\Omega
    \end{bmatrix}
    \in\boldsymbol E_+.
  \end{equation*}
  by definition of the order on $\boldsymbol E$ and thus $\underline{\boldsymbol u}\leq\overline{\boldsymbol u}$. We next show $\overline{\boldsymbol u}$ is a weak super-solution. To do so take $\boldsymbol\varphi=[\varphi_1,-\varphi_2]\in \boldsymbol E_+'\cap D(\boldsymbol A')$ with $\varphi_1,\varphi_2\in E_+'$. We note that the second component of $F(\overline{\boldsymbol u})$ vanishes. As $M1_\Omega$ is a weak super-solution of $A_1u=0$ we see that
  \begin{equation*}
    \langle\overline{\boldsymbol u}, \boldsymbol A'\boldsymbol\varphi\rangle
    =\langle M1_\Omega, A_1\varphi_1\rangle
    \geq 0
    \geq\langle a_1M1_\Omega-b_{11}M^21_\Omega, \varphi_1\rangle
    =\langle F(\overline{\boldsymbol u}), \boldsymbol\varphi\rangle.
  \end{equation*}
  Similarly, as the first component of $F(\underline{\boldsymbol u})$ vanishes, we see that
  \begin{equation*}
    \langle\underline{\boldsymbol u}, \boldsymbol A'\boldsymbol\varphi\rangle
    =-\langle M1_\Omega, A_2\varphi_2\rangle
    \leq 0
    \leq\langle a_2M1_\Omega-b_{22}M^21_\Omega, -\varphi_2\rangle
    =\langle F(\underline{\boldsymbol u}),\boldsymbol\varphi\rangle.
  \end{equation*}
  This shows that $\underline{\boldsymbol u}$ and $\overline{\boldsymbol u}$ is an ordered pair of sub- and super-solutions. As $F$ is quasi-monotone the claim follows from Theorem~\ref{thm:existence}. The identity \eqref{eq:F-qm} also implies that $F$ is Lipschitz continuity on $[\underline{\boldsymbol u},\overline{\boldsymbol u}]$ and thus the solution is unique by Proposition~\ref{prop:uniqueness}.
\end{proof}
We consider the stationary problem
\begin{equation}
  \label{eq:stationary-competition}
  \boldsymbol A\boldsymbol u=F(\boldsymbol u)
\end{equation}
We first look at the case where one species is absent. The states $(w_1,0)$ and $(0,w_2)$ with $0<w_k\in D(A_k)$, $k=1,2$, are equilibrium solutions of \eqref{eq:competition-system} if and only if $w_1$, $w_2$ are solutions to the logistic equations
\begin{equation}
  \label{eq:semi-trivial-logistic}
  A_1w_1=a_1w_1-b_{11}w_1^2
  \qquad\text{and}\qquad
  A_2w_2=a_2w_2-b_{22}w_2^2,
\end{equation}
respectively. We call $(w_1,0)$ and $(0,w_2)$ the \emph{semi-trivial solutions} of \eqref{eq:stationary-competition}. Section~\ref{ex:logistic-equation} implies that
\begin{equation}
  \label{eq:cs-necessary}
  a_1>\lambda_1(A_1)\qquad\text{and}\qquad a_2>\lambda_1(A_2)
\end{equation}
is a necessary condition for the existence of a non-trivial equilibrium solution of \eqref{eq:stationary-competition}. For the existence of a \emph{coexistence state}, that is, a stationary solution $(u_1,u_2)$ of \eqref{eq:stationary-competition} with $u_1>0$ and $u_2>0$ we need a stronger condition.

\begin{theorem}[Coexistence in competition systems]
  \label{thm:coexistence}
  (a) Assume that a coexistence state $(u_1,u_2)$ exists. Then \eqref{eq:cs-necessary} holds. Moreover,
  \begin{equation}
    \label{eq:coexistence-estimates}
    0\leq u_1\leq w_1\quad\text{and}\quad 0\leq u_2\leq w_2,
  \end{equation}
  where $(w_1,0)$ and $(0,w_2)$ are the semi-trivial solutions of \eqref{eq:stationary-competition}.

  (b) If we assume that
  \begin{equation}
    \label{eq:cs-sufficient}
    a_1>\lambda_1(A_1+b_{12}w_2)
    \qquad\text{and}\qquad
    a_2>\lambda_1(A_2+b_{21}w_1),
  \end{equation}
  then there exists a coexistence state. More precisely, there exist a sub-solution and a super-solution with respect to the new order on $\boldsymbol E$ such the assertions of Theorem~\ref{thm:existence} are valid for $\boldsymbol A$ and $F$.
\end{theorem}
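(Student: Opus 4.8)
The plan is to treat (a) and (b) separately, in each case peeling the competition system apart into logistic sub-problems and feeding them into the eigenvalue machinery of Theorems~\ref{thm:pev} and~\ref{thm:ev-monotone}.

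For part (a) I would start from a coexistence state $(u_1,u_2)$, so $u_1,u_2>0$, and rewrite the first stationary equation as
\begin{equation*}
  (A_1+b_{11}u_1+b_{12}u_2)u_1=a_1u_1,
\end{equation*}
exhibiting $u_1$ as a positive eigenvector of the admissible operator $A_1+b_{11}u_1+b_{12}u_2$. By Theorem~\ref{thm:pev} this forces $\lambda_1(A_1+b_{11}u_1+b_{12}u_2)=a_1$, and since $b_{11}u_1+b_{12}u_2>0$ the strict monotonicity of Theorem~\ref{thm:ev-monotone} yields $\lambda_1(A_1)<a_1$; the estimate $\lambda_1(A_2)<a_2$ is symmetric, giving \eqref{eq:cs-necessary}. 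For \eqref{eq:coexistence-estimates} I would discard the nonnegative competition term: from $A_1u_1=a_1u_1-b_{11}u_1^2-b_{12}u_1u_2\leq a_1u_1-b_{11}u_1^2$ one reads off that $u_1$ is a weak sub-solution of the logistic equation $A_1v=a_1v-b_{11}v^2$. Paired with the super-solution $M1_\Omega$ for $M$ large, Theorem~\ref{thm:existence}(iv) produces an equilibrium of the logistic problem inside $[u_1,M1_\Omega]$; it is positive (being $\geq u_1>0$), hence equals the unique positive equilibrium $w_1$ by Proposition~\ref{prop:logistic-existence}, so $u_1\leq w_1$. The bound $u_2\leq w_2$ is identical.

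For part (b) the heart is to construct a strict ordered pair of sub- and super-solutions in the competition order that sits strictly inside the positive cone of \emph{both} components. I would let $\psi_1\gg0$ be the principal eigenvector of $A_1+b_{12}w_2$ and $\psi_2\gg0$ that of $A_2+b_{21}w_1$ (both admissible by Theorem~\ref{thm:ev-monotone}), and set
\begin{equation*}
  \underline{\boldsymbol u}:=\begin{bmatrix}\varepsilon\psi_1\\w_2\end{bmatrix},
  \qquad
  \overline{\boldsymbol u}:=\begin{bmatrix}w_1\\\varepsilon\psi_2\end{bmatrix}
\end{equation*}
for small $\varepsilon>0$. Unwinding $\boldsymbol E_+=E_+\times(-E_+)$ exactly as in the proof of Proposition~\ref{prop:cs-existence}, the super-solution condition for $\overline{\boldsymbol u}$ splits into $A_1w_1\geq a_1w_1-b_{11}w_1^2-b_{12}w_1\varepsilon\psi_2$ (which holds with slack, the left side being $a_1w_1-b_{11}w_1^2$ and the missing term nonnegative) together with the \emph{reversed} inequality $A_2(\varepsilon\psi_2)\leq a_2\varepsilon\psi_2-b_{21}w_1\varepsilon\psi_2-b_{22}\varepsilon^2\psi_2^2$. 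Substituting $A_2\psi_2=\lambda_1(A_2+b_{21}w_1)\psi_2-b_{21}w_1\psi_2$, this collapses to the pointwise inequality $0\leq(a_2-\lambda_1(A_2+b_{21}w_1))\psi_2-b_{22}\varepsilon\psi_2^2$, valid for small $\varepsilon$ precisely because of \eqref{eq:cs-sufficient}. The sub-solution conditions for $\underline{\boldsymbol u}$ are verified symmetrically, the first component reducing to $0\leq(a_1-\lambda_1(A_1+b_{12}w_2))\psi_1-b_{11}\varepsilon\psi_1^2$.

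It then remains to check $\underline{\boldsymbol u}\leq\overline{\boldsymbol u}$, i.e.\ $\varepsilon\psi_1\leq w_1$ and $\varepsilon\psi_2\leq w_2$, which holds for $\varepsilon$ small (immediately in $C(\bar\Omega)$, where each $w_k\geq\delta>0$ and $\psi_k$ is bounded; in $L^p(\Omega)$ by the comparability of strictly positive eigenfunctions of admissible operators). Since $F$ is quasi-increasing on the bounded interval $[\underline{\boldsymbol u},\overline{\boldsymbol u}]$ by the computation preceding Proposition~\ref{prop:cs-existence}, Theorem~\ref{thm:existence} applies and furnishes equilibria in $[\underline{\boldsymbol u},\overline{\boldsymbol u}]$; every element of this interval has first component $\geq\varepsilon\psi_1\gg0$ and second component $\geq\varepsilon\psi_2\gg0$, so any such equilibrium is a genuine coexistence state. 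I expect the only real obstacle to be bookkeeping: keeping the direction of the second-component inequality correct under the order $\boldsymbol E_+=E_+\times(-E_+)$, and recognising that \eqref{eq:cs-sufficient} is exactly the hypothesis that lets the shifted-eigenvector sub/super-solutions be pushed strictly off the semi-trivial states $(w_1,0)$ and $(0,w_2)$ while keeping the order interval nonempty and interior to the positive cone.
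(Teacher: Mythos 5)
Your overall strategy coincides with the paper's: part (a) via the eigenvector identity $(A_1+b_{11}u_1+b_{12}u_2)u_1=a_1u_1$ plus strict monotonicity of the principal eigenvalue, and the comparison $u_1\leq w_1$ by viewing $u_1$ as a sub-solution of the scalar logistic equation; part (b) via the same sub/super-solution pair $\underline{\boldsymbol u}=(\varepsilon\psi_1,w_2)$, $\overline{\boldsymbol u}=(w_1,\varepsilon\psi_2)$ built from the principal eigenvectors of $A_1+b_{12}w_2$ and $A_2+b_{21}w_1$, with the same cancellation reducing the sub/super-solution inequalities to $0\leq\bigl(a_k-\lambda_1\bigr)\psi_k-b_{kk}\varepsilon\psi_k^2$, which holds for small $\varepsilon$ by \eqref{eq:cs-sufficient} and boundedness of $\psi_k$. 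These computations are correct, and your observation that every point of $[\underline{\boldsymbol u},\overline{\boldsymbol u}]$ has both components bounded below by $\varepsilon\psi_k>0$, so that any equilibrium produced by Theorem~\ref{thm:existence} is automatically a coexistence state, is exactly the point the paper leaves implicit.

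There is, however, one genuine gap: your justification of the ordering $\varepsilon\psi_1\leq w_1$, $\varepsilon\psi_2\leq w_2$ in the case $E=L^p(\Omega)$. You appeal to ``comparability of strictly positive eigenfunctions of admissible operators'', but no such result exists in the paper's framework, and it is false in this generality: in $L^p(\Omega)$ the relation $v\gg 0$ only means $v(x)>0$ almost everywhere, and two a.e.\ positive functions need not be comparable (they may vanish at different rates near $\partial\Omega$; think of $\psi_1\equiv 1$ versus $w_1(x)=\dist(x,\partial\Omega)$ -- no $\varepsilon>0$ gives $\varepsilon\psi_1\leq w_1$). Note also that $w_1$ and $\psi_1$ are principal eigenfunctions of \emph{different} operators, $A_1+b_{11}w_1$ and $A_1+b_{12}w_2$, so even a comparability statement within one eigenvalue problem would not suffice. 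The paper closes this step differently, using a tool you have already set up in part (a): since $b_{12}w_2(\varepsilon\psi_1)\geq 0$, your sub-solution inequality for the first component of $\underline{\boldsymbol u}$ implies $A_1(\varepsilon\psi_1)\leq a_1(\varepsilon\psi_1)-b_{11}(\varepsilon\psi_1)^2$, i.e.\ $\varepsilon\psi_1$ is a sub-solution of the scalar logistic equation; pairing it with the super-solution $M1_\Omega$ and invoking uniqueness of the positive logistic equilibrium (Proposition~\ref{prop:logistic-existence}) yields $\varepsilon\psi_1\leq w_1$, and symmetrically $\varepsilon\psi_2\leq w_2$. With that substitution -- a one-line repair using your own part (a) argument -- your proof is complete and agrees with the paper's in both cases $L^p(\Omega)$ and $C(\bar\Omega)$.
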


Recall that \eqref{eq:cs-necessary} is necessary for the existence of a coexistence state. We can only prove that the stronger condition \eqref{eq:cs-sufficient} is sufficient. We further note that if \eqref{eq:cs-necessary} is satisfied, then \eqref{eq:cs-sufficient} is satisfied if $b_{12}$ and $b_{21}$ are small enough.

The above theorem provides a sufficient condition for the existence of a coexistence state for a rather general class of operators.

For the Laplace operator more about the structure of coexistence states is known. In particular, for instance the results in \cite{blat:84:bss,dancer:85:psp,dancer:84:psp,du:94:bmc,eilbeck:94:ccm,hess:91:ppb} show that the solution structure can be quite complicated.  Not all solutions can be obtained by means of the method of sub- and super-solutions. In particular the non-stable ones cannot and other methods such as bifurcation or fixed point index calculations are used, see the above references.

Property~\eqref{eq:coexistence-estimates} has a natural interpretation. A coexistence state has to be below an equilibrium without competition.

\begin{proof}[Proof of Theorem~\ref{thm:coexistence}]
  (a) We first prove that \eqref{eq:cs-necessary} is a necessary condition for the existence of a coexistence state and that \eqref{eq:coexistence-estimates} holds. Given a coexistence solution $\boldsymbol u=(u_1,u_2)$ we have
  \begin{equation*}
    A_1u_1=a_1u_1-b_{11}u_1^2-b_{12}u_1u_2\leq a_1u_1-b_{11}u_1^2.
  \end{equation*}
  Hence, $u_1>0$ is a sub-solution of the first equation in \eqref{eq:semi-trivial-logistic}. As seen before, we can also choose a constant $M\geq u_1$ such that $M1_\Omega$ is a weak super-solution of the logistic equation. It follows from Theorem~\ref{thm:existence} that the logistic equation $A_1u=a_1u-b_{11}u^2$ has an equilibrium in that interval. As the non-zero solution of the logistic equation is unique it coincides with $w_1$ and hence $0<u_1\leq w_1$. A similar argument applies to $u_2$. Since
  \begin{displaymath}
    (A_1+b_11u_1+b_{12}u_2)u_1=a_1u,
  \end{displaymath}
  it follows from Theorem~\ref{thm:pev} and Theorem~\ref{thm:ev-monotone} that
  \begin{displaymath}
    \lambda_1(A_1)<\lambda_1(A_1+b_11u_1+b_{12}u_2)=a_1.
  \end{displaymath}
  Hence, \eqref{eq:cs-necessary} is a necessary condition for the existence of a non-trivial equilibrium solution of \eqref{eq:competition-system}.

  (b) We now assume that \eqref{eq:cs-sufficient} holds. Let $v_1$ and $v_2$ by the principal eigenvectors associated with $A_1+b_{12}w_2$ and $A_2+b_{21}w_1$, respectively. We show that
  \begin{equation*}
    \underline{\boldsymbol u} = (\varepsilon v_1,w_2),
    \qquad
    \overline{\boldsymbol u} = (w_1,\varepsilon v_2)
  \end{equation*}
  form a pair of ordered sub- and super-solutions. Indeed, note that
  \begin{align*}
    A_1(\varepsilon v_1)
    & =\lambda_1(A_1+b_{12}w_2)(\varepsilon v_1)-b_{12}w_2(\varepsilon v_1)                     \\
    & =a_1(\varepsilon v_1)-b_{11}(\varepsilon v_1)^2-b_{12}w_2(\varepsilon v_1)                \\
    & \qquad+\bigl[\lambda_1(A_1+b_{12}w_2)-a_1 +b_{11}(\varepsilon v_1)\bigr](\varepsilon v_1)
  \end{align*}
  for all $\varepsilon>0$. As $v_1$ is bounded and $\lambda_1(A_1+b_{12}w_2)-a_1<0$ by assumption there exists $\varepsilon>0$ such that
  \begin{equation}
    \label{eq:v1-w2-subsolution}
    A_1(\varepsilon v_1)
    <a_1(\varepsilon v_1)-b_{11}(\varepsilon v_1)^2-b_{12}w_2(\varepsilon v_1)
  \end{equation}
  for all $\varepsilon\in(0,\varepsilon_0)$. We furthermore have that
  \begin{equation*}
    A_2w_2=a_2w_2-b_{22}w_2^2
    >a_2w_2-b_{21}(\varepsilon v_1)w_2-b_{22}w_2^2
  \end{equation*}
  for all $\varepsilon\in(0,\varepsilon_0]$. By definition of the positive cone in $\boldsymbol E$ we see that $\boldsymbol A\underline{\boldsymbol u}<F(\underline{\boldsymbol u})$, showing that $\underline{\boldsymbol u}$ is a sub-solution whenever $\varepsilon\in(0,\varepsilon_0]$. We can do similar calculations with the roles of the two equations interchanged. We then see that $\overline{\boldsymbol u}$ is a super-solution if $\varepsilon\in(0,\varepsilon_0]$ by possibly making $\varepsilon_0$ smaller. We next show that the pair of sub- and super-solutions can be ordered. We need to make sure that $\varepsilon v_1\leq w_1$ and $\varepsilon v_2\leq w_2$. It follows from \eqref{eq:v1-w2-subsolution} that
\begin{equation*}
  A_1(\varepsilon v_1)
  <a_1(\varepsilon v_1)-b_{11}(\varepsilon v_1)^2,
\end{equation*}
that is, $\varepsilon v_1$ is a sub-solution of the logistic equation $A_1w_1=a_1w_1-b_{11}w_1^2$. As before, it follows that $\varepsilon\varphi_1\leq w_1$ for all $\varepsilon\in(0,\varepsilon_0]$. A similar argument shows that $\varepsilon v_2\leq w_2$ for all $\varepsilon\in(0,\varepsilon_0]$. The choice given here will lead to the existence of a coexistence state due to Theorem~\ref{thm:existence}.
\end{proof}

We note that the conditions \eqref{eq:cs-sufficient} are often formulated in terms of spectral radii of some operators, see \cite{dancer:85:psp,dancer:84:psp}. In our context is more convenient to use conditions in terms of principal eigenvalues adapted from \cite[Theorem~4.1]{eilbeck:94:ccm}.

\subsection{The Fisher Equation}
As a last example we apply our theory to a simple version of the Fisher equation from population genetics as proposed by \cite{fleming:75:smm}. The Fisher equation models the evolution of two alleles $A_1$, $A_2$ corresponding to the genotypes $A_1A_1$, $A_1A_2$ and $A_2A_2$. Denote by $u$ the proportion of the allele $A_1$ at time $t$ and location $x$ in the habitat $\Omega\subseteq\mathbb R^N$. The change of alleles through selection and diffusion is modelled by a semi-linear equation of the abstract form
\begin{equation}
  \label{eq:fisher-equation}
  \dot u+Au=mh(u)\qquad\text{for $t>0$}
\end{equation}
where $m\in L^\infty(\Omega)$ and $h\colon\mathbb R\to\mathbb R$ is given by
\begin{equation*}
  \xi\mapsto h(\xi):=\xi(1-\xi)\bigl(\alpha(1-\xi)+(1-\alpha)\xi\bigr)
\end{equation*}
for some $\alpha\in(0,1)$. In the original setup, $A$ is the Neumann Laplacian, but we will work with any of the admissible operators on $L^p(\Omega)$, $1\leq p<\infty$ or $C(\bar\Omega)$ from Section~\ref{sec:operators}. The parameter $\alpha$ determines the fitness of the three genotypes in the proportions $1:1-\alpha m:1-m$. The weight $m$ may change sign. For more precise explanations see \cite{fleming:75:smm} or \cite[Section~29]{hess:91:ppb}. The corresponding superposition operator $u\mapsto h\circ u$ is Lipschitz continuous and quasi-monotone on every bounded set in $L^\infty(\Omega)$ by Remark~\ref{rem:superposition-operator}.

Since $u$ is a fraction of a population we are only interested in solutions with $0\leq u\leq 1_\Omega$. Hence we make sure that the constant functions $u=0$ and $u=1_\Omega$ are a pair of weak sub- and super-solutions. We note that $h(0)=h(1)=0$. In particular, the constant function $u=0$ is always a solution. Since $A$ is an admissible operator
\begin{equation*}
  \langle 1_\Omega,A'v'\rangle\geq 0,
\end{equation*}
for all $0\leq v'\in D(A')$. Hence, the constant function $u=1_\Omega$ is a weak super-solution of the stationary equation
\begin{equation}
  \label{eq:eq:fisher-equation-stationary}
  Au=mh(u)
\end{equation}
in $E$. As a consequence of Theorem~\ref{thm:existence} and Proposition~\ref{prop:uniqueness}, for every $u_0\in[0,1_\Omega]$ the parabolic equation \eqref{eq:fisher-equation} has a unique solution $u\in C([0,\infty),E)$ with values in $[0,1_\Omega]$. The equilibria for \eqref{eq:fisher-equation} could be one of $u=0$ or $u=1_\Omega$, so we need additional assumptions to guarantee the existence of non-trivial equilibria.

We show that if $u=0$ is a linearly unstable solution of \eqref{eq:fisher-equation}, then the there exits an equilibrium $0<u_*\leq 1_\Omega$. A short computation shows that $h'(0)=\alpha$. Hence the eigenvalue problem associated with the linearization of \eqref{eq:eq:fisher-equation-stationary} at $u=0$ is
\begin{equation*}
  Av-\alpha m v=\lambda v.
\end{equation*}
Let $\lambda_0:=\lambda_1(-\alpha m)$ be its principal eigenvalue. Denote the corresponding principal eigenvector by $\varphi_0$. The zero solution of \eqref{eq:fisher-equation} is linearly unstable if and only if $\lambda_0<0$. Assume that $\lambda_1<0$. Since $h'(0)=\alpha$ and $h(0)=0$ we can write
\begin{equation*}
  h(\xi)=\alpha\xi+r_0(\xi)\xi
\end{equation*}
for all $\xi\in\mathbb R$ where $r_0\in C(\mathbb R)$ with $r_0(0)=0$. Hence, for $\varepsilon>0$
\begin{equation*}
  A(\varepsilon\varphi_0)=\lambda_0\varepsilon\varphi_0+\alpha m\varepsilon\varphi_0
  =m h(\varepsilon\varphi_0)+\varepsilon\varphi_0\bigl(\lambda_0-mr_0(\varepsilon\varphi_0)\bigr)
\end{equation*}
As $\varphi_0\in L^\infty(\Omega)$, $\lambda_0<0$ and $r_0(0)=\alpha$ there exists $\varepsilon_0>0$ such that
\begin{equation*}
  \lambda_0-mr_0(\varepsilon\varphi)<0
\end{equation*}
for all $\varepsilon\in(0,\varepsilon_0]$. Hence $\varepsilon\varphi_0$ is a sub-solution of \eqref{eq:eq:fisher-equation-stationary} for all $\varepsilon\in(0,\varepsilon_0]$.

We show that if $u=1_\Omega$ is a linearly unstable solution of \eqref{eq:fisher-equation}, then the there is a stationary solution $0\leq u^*<1_\Omega$. A short computation shows that $h'(1)=1-\alpha$. Hence the eigenvalue problem associated with the linearization of \eqref{eq:eq:fisher-equation-stationary} at $u=1_\Omega$ is
\begin{equation*}
  Av-(1-\alpha)m v=\lambda v.
\end{equation*}
 Let $\lambda_1:=\lambda_1(-(1-\alpha)m))<0$ be its principal eigenvalue. Denote the corresponding principal eigenvector by $\varphi_1$. The zero solution of \eqref{eq:fisher-equation} is linearly unstable if and only if $\lambda_1<0$. Assume now that $\lambda_1<0$. Since $h'(1)=1-\alpha$ and $h(1)=0$ we can write
\begin{equation*}
  h(\xi)=(\alpha-1)(\xi-1)+r_1(\xi)(\xi-1)
\end{equation*}
for all $\xi\in\mathbb R$ where $r_1\in C(\mathbb R)$ with $r_1(1)=0$. Hence, as $1_\Omega$ is a weak super-solution for $Au=0$, for $\varepsilon>0$ and $\leq v'\in D(A')$,
\begin{align*}
  \langle 1_\Omega-\varepsilon\varphi_1,A'v'\rangle
   & \geq\langle-\varepsilon\varphi_1,A'v'\rangle                                                                          \\
   & =-\bigl\langle\bigl((\alpha-1)m+\lambda_1\bigr)\varepsilon\varphi_1,v'\bigr\rangle                                    \\
   & =\bigl\langle mh(1_\Omega-\varepsilon\varphi_1)+\varepsilon\varphi_1(mr_1(1_\Omega-\varepsilon\varphi_1)-\lambda_1),v'\bigr\rangle.
\end{align*}
for all $\varepsilon>0$. As $\varphi_1\in L^\infty(\Omega)$, $r_1(1)=0$ and $-\lambda_1>0$, there exists $\varepsilon_1>0$ such that
\begin{equation*}
  \langle 1_\Omega-\varepsilon\varphi_1,A'v'\rangle
  \geq\bigl\langle mh(1-\varepsilon\varphi_1),v'\bigr\rangle
\end{equation*}
for all $\varepsilon\in(0,\varepsilon_1)$ and all $0\leq v'\in D(A')$. Hence $1_\Omega-\varepsilon\varphi_1$ is a weak super-solution of \eqref{eq:eq:fisher-equation-stationary} for all $\varepsilon\in(0,\varepsilon_1]$.

Assume now that both $0$ and $1_\Omega$ are linearly unstable solutions. Since $\|\varepsilon\varphi_0\|_\infty\to 0$ and $\|1-\varepsilon\varphi_1\|_\infty\to 1$ we can find $\varepsilon>0$ such that $\varepsilon\varphi_0<1_\Omega-\varepsilon\varphi_1$, that is, we have a pair of ordered sub-and super-solutions to which Theorem~\ref{thm:existence} applies. As pointed out in \cite[Section~29, p.~99]{hess:91:ppb} such a situation is only possible if $m$ changes sign.

\appendix

\section{Appendix: Some facts on ordered Banach spaces}
In this appendix we recall and prove some facts on ordered Banach spaces which are useful for our purposes.
\begin{proposition}
  \label{prop:weak-positivity}
  Let $E$ be an ordered Banach space and let $u\in E$. If $\langle v',u\rangle\geq 0$ for all $v'\in E_+'$, then $u\geq 0$.
\end{proposition}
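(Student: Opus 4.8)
The plan is to prove the contrapositive by a geometric separation argument. Suppose $u\notin E_+$. Recall that for an ordered Banach space the cone $E_+$ is closed by definition, and it is convex, being closed under addition and under multiplication by nonnegative scalars. Since $E_+$ is a nonempty closed convex set not containing the point $u$, I would invoke the geometric Hahn--Banach separation theorem to obtain $v'\in E'$ and $\alpha\in\mathbb R$ with
\begin{equation*}
  \langle v',u\rangle<\alpha\leq\langle v',w\rangle
\end{equation*}
for all $w\in E_+$. This strictly separates $u$ from the cone.

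The next step is to upgrade this separating functional to one lying in the dual cone $E_+'$, and this is really the only point requiring care. First, since $0\in E_+$, taking $w=0$ yields $\alpha\leq 0$, and hence $\langle v',u\rangle<0$. Second, I would exploit the scaling invariance of the cone: for any fixed $w\in E_+$ and every $t>0$ we have $tw\in E_+$, so $t\langle v',w\rangle\geq\alpha$; letting $t\to\infty$ forces $\langle v',w\rangle\geq 0$, for otherwise the left-hand side would tend to $-\infty$. As $w\in E_+$ was arbitrary, this shows precisely that $v'\in E_+'$.

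Combining the two conclusions, I would have produced a functional $v'\in E_+'$ with $\langle v',u\rangle<0$, directly contradicting the hypothesis that $\langle v',u\rangle\geq 0$ for all $v'\in E_+'$. Therefore $u\in E_+$, that is, $u\geq 0$, completing the proof. The whole argument rests on the closedness and cone structure of $E_+$ together with the separation theorem; the main obstacle, such as it is, lies not in the separation itself but in verifying that the separating functional belongs to $E_+'$ rather than merely to $E'$, which is exactly where the homogeneity of the cone is used.
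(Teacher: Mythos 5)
Your proof is correct, but it takes a genuinely different route from the paper. You prove the contrapositive via the \emph{geometric} Hahn--Banach theorem: strictly separate the point $u\notin E_+$ from the closed convex cone $E_+$, then use $0\in E_+$ to force $\langle v',u\rangle<0$ and the homogeneity of the cone (letting $t\to\infty$ in $t\langle v',w\rangle\geq\alpha$) to push the separating functional into $E_+'$. The paper instead applies the \emph{analytic} (dominated-extension) form of Hahn--Banach to the sublinear functional $p(v):=\inf_{w\in E_+}\|v-w\|$, the distance to the cone: it produces $\varphi\in E'$ with $\langle\varphi,u\rangle=p(u)$ and $\varphi\leq p$, observes that $-\varphi\in E_+'$ because $p$ vanishes on $E_+$, and concludes $p(u)=0$ directly from the hypothesis, whence $u\in E_+$ by closedness of the cone. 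The two arguments are close cousins --- the strict separation theorem is itself usually deduced from the analytic Hahn--Banach theorem via exactly this kind of distance or Minkowski functional --- so the paper's proof in effect inlines the separation machinery for this special case, making the appendix self-contained, while yours is shorter and more transparent if one takes the separation theorem as a black box. Both arguments use the same two structural facts: closedness of $E_+$ (to get $p(u)>0$, respectively strict separation, when $u\notin E_+$) and the cone's invariance under multiplication by $[0,\infty)$ (to show the relevant functional is positive). Your step upgrading $v'$ to an element of $E_+'$ is exactly right and is indeed the only point requiring care.
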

\begin{proof}
  Consider the semi-linear mapping $p\colon E\to\mathbb R$ given by $p(v):=\inf_{w\in E_+}\|v-w\|_E$. Since $0\in E_+$ we have $0\leq p(v)\leq \|v\|$ for all $v\in E$. Consequently
  \begin{equation*}
    |p(v)|\leq \|v\|
  \end{equation*}
  for all $v\in E$. As $E_+$ is closed we also have that
  \begin{equation*}
    p(v)=0\iff v\geq 0.
  \end{equation*}
  Let now $u\in E$ be such that $\langle v',u\rangle\geq 0$ for all $v'\in E_+'$. By the Hahn-Banach Theorem there exists a linear map $\varphi\colon E\to \mathbb R$ such that $\langle\varphi,u\rangle=p(u)$ and $\langle\varphi,v\rangle\leq p(v)$ for all $v\in E$. Thus
  \begin{equation*}
    \pm\langle\varphi,v\rangle\leq p(\pm v)\leq\|v\|
  \end{equation*}
  for all $v\in E$. It follows that $\|\varphi\|\leq 1$. Moreover, if $v\geq 0$, then $\langle\varphi,v\rangle\leq p(v)=0$ and thus $-\varphi\geq 0$. Therefore $p(u)=\langle\varphi,u\rangle=-\langle-\varphi,u\rangle\leq 0$ by our assumption. Thus $p(u)=0$, which means that $u\geq 0$.
\end{proof}
We next show that $D(A')_+:=D(A')\cap E_+'$ determines positivity if $-A$ is the generator of a positive semigroup.
\begin{corollary}
  \label{cor:weak-positivity}
  Let $E$ be an ordered Banach space with normal cone and let $-A$ be the generator of a positive $C_0$-semigroup on $E$. Suppose that $u\in E$ is such that $\langle v',u\rangle\geq 0$ for all $v'\in D(A')_+$. Then $u\geq 0$.
\end{corollary}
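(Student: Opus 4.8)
The plan is to deduce the statement from Proposition~\ref{prop:weak-positivity} by showing that $D(A')_+$ is ``weak$^*$-dense enough'' in $E_+'$ to detect positivity, with the resolvent of the generator providing the bridge. Since $(S(t))_{t\geq 0}$ is positive, there is $\omega\in\mathbb R$ and $M\geq 1$ with $\|S(t)\|\leq Me^{\omega t}$, and for $\lambda>\omega$ the resolvent $(\lambda+A)^{-1}=\int_0^\infty e^{-\lambda t}S(t)\,dt$ is a \emph{positive} operator on $E$, being a norm-convergent integral of positive operators. Consequently its dual $(\lambda+A')^{-1}=\bigl((\lambda+A)^{-1}\bigr)'$ is a positive operator on $E'$, so it maps $E_+'$ into $E_+'$; moreover its range is contained in $D(A')$. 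Hence, for any fixed $v'\in E_+'$, the Yosida-type approximant $v_\lambda':=\lambda(\lambda+A')^{-1}v'$ already lies in $D(A')_+$ for all $\lambda>\omega$.

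First I would check that $v_\lambda'\to v'$ in the weak$^*$ topology as $\lambda\to\infty$. For every $w\in E$ one has $\langle v_\lambda',w\rangle=\langle v',\lambda(\lambda+A)^{-1}w\rangle$, and the standard approximation property of $C_0$-semigroups gives $\lambda(\lambda+A)^{-1}w\to w$ in the norm of $E$ as $\lambda\to\infty$. Therefore $\langle v_\lambda',w\rangle\to\langle v',w\rangle$, which is precisely weak$^*$ convergence. Applying this with $w=u$ and invoking the hypothesis $\langle v',u\rangle\geq 0$ for all $v'\in D(A')_+$, I obtain $\langle v',u\rangle=\lim_{\lambda\to\infty}\langle v_\lambda',u\rangle\geq 0$. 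Since $v'\in E_+'$ was arbitrary, this says $\langle v',u\rangle\geq 0$ for \emph{all} $v'\in E_+'$, and Proposition~\ref{prop:weak-positivity} then delivers $u\geq 0$.

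The only points requiring care are that $(\lambda+A')^{-1}$ really is the dual of the positive resolvent (so that positivity and the inclusion of its range in $D(A')$ transfer), and the weak$^*$ convergence of the approximants; both are routine facts about duals of resolvents and about $C_0$-semigroups. The genuine conceptual step — and what I expect to be the crux — is the observation that the approximants $\lambda(\lambda+A')^{-1}v'$ simultaneously lie in $D(A')_+$ and converge weak$^*$ to $v'$, which is exactly what upgrades the positivity test restricted to $D(A')_+$ to the full test on $E_+'$ that Proposition~\ref{prop:weak-positivity} handles. (Note that normality of the cone is not needed for this argument; it is part of the ambient hypotheses.)
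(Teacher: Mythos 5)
Your proof is correct and follows essentially the same route as the paper: both use the Yosida approximants $\lambda(\lambda+A')^{-1}v'\in D(A')_+$, their weak$^*$ convergence to $v'$, and then Proposition~\ref{prop:weak-positivity}. The only difference is that you spell out the details (positivity of the dual resolvent via the Laplace transform and the norm convergence $\lambda(\lambda+A)^{-1}w\to w$) that the paper delegates to a citation of Engel--Nagel.
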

\begin{proof}
  Let $w'\in E_+'$. Using the Yosida approximation, see for instance \cite[Section~III.4.10]{engel:00:ops}, we have that $\lambda\bigl((\lambda+A')^{-1}\bigr)'w'\in D(A')_+$ for all $\lambda$ large enough and
  \begin{equation*}
    \langle w',u\rangle
    =\lim_{\lambda\to\infty}\langle\lambda(\lambda+A')^{-1}w',u\rangle
    \geq 0.
  \end{equation*}
  Now Proposition~\ref{prop:weak-positivity} implies that $u\geq 0$.
\end{proof}
We say that the positive cone $E_+$ is \emph{generating} if $E=E_+-E_+$. As a consequence there exists $\gamma>0$ such that for all $u\in E$ there exist $u_1,u_2\in E_+$ such that
\begin{equation}
  \label{eq:normal-cone-estimate}
  u=u_1-u_2\text{ and }\|u_1\|+\|u_2\|\leq\gamma\|u\|,
\end{equation}
see \cite[Lemma~2.2]{arendt:09:ecn}. We let
\begin{equation*}
  B_+':=\{v'\in E_+'\colon \|v'\|\leq 1\}.
\end{equation*}
We next give characterisations of ordered Banach spaces with a normal cone.
\begin{lemma}
  \label{lem:normal-cone}
  Suppose that $E$ is an ordered Banach space. Then the following assertions are equivalent.
  \begin{enumerate}[\normalfont (i)]
  \item $E_+$ is a normal cone;%
  \item $E_+'$ is generating;%
  \item $\tnorm{u}:=\sup_{v'\in B_+'}|\langle v',u\rangle|$ for $u\in E$ defines an equivalent norm on $E$.
  \end{enumerate}
\end{lemma}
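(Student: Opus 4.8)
The plan is to prove the three statements equivalent by establishing the cycle (iii) $\Rightarrow$ (i) $\Rightarrow$ (ii) $\Rightarrow$ (iii). Throughout I would use that $\tnorm{\cdot}$ is automatically a seminorm dominated by the original norm, since $\tnorm{u}=\sup_{v'\in B_+'}|\langle v',u\rangle|\le\|u\|$ because $\|v'\|\le 1$ on $B_+'$. The entire issue is therefore to decide when the reverse estimate $\|u\|\le c\tnorm{u}$ holds, which is exactly what makes $\tnorm{\cdot}$ an equivalent norm.

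For (iii) $\Rightarrow$ (i) I would first note that $\tnorm{\cdot}$ is monotone: if $0\le u\le v$ then $0\le\langle v',u\rangle\le\langle v',v\rangle$ for every $v'\in B_+'$, so $\tnorm{u}\le\tnorm{v}$. Given an order interval $[a,b]$ and $w\in[a,b]$, applying this to $0\le w-a\le b-a$ together with the assumed equivalence $\|\cdot\|\le c\tnorm{\cdot}$ yields $\|w\|\le\|a\|+c\tnorm{b-a}$, so the interval is norm bounded and $E_+$ is normal. For (ii) $\Rightarrow$ (iii) I would apply the decomposition estimate \eqref{eq:normal-cone-estimate} from \cite{arendt:09:ecn} to the Banach space $E'$ with its closed generating cone $E_+'$: every $\phi\in E'$ splits as $\phi=\phi_1-\phi_2$ with $\phi_1,\phi_2\in E_+'$ and $\|\phi_1\|+\|\phi_2\|\le\gamma\|\phi\|$. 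Since $\phi_i/\|\phi_i\|\in B_+'$ this gives $|\langle\phi,u\rangle|\le(\|\phi_1\|+\|\phi_2\|)\tnorm{u}\le\gamma\|\phi\|\tnorm{u}$, and taking the supremum over $\|\phi\|\le 1$ produces $\|u\|\le\gamma\tnorm{u}$, the missing inequality.

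The crux is (i) $\Rightarrow$ (ii), which is Krein's duality theorem that a normal cone has a generating dual cone. Fixing $\phi\in E'$, I would introduce on $E_+$ the functional $\psi(x):=\sup\{\langle\phi,z\rangle\colon 0\le z\le x\}$. Normality is exactly what makes $\psi$ finite: the order interval $[0,x]$ is bounded, so with the normality constant $\gamma$ one gets $0\le\psi(x)\le\gamma\|\phi\|\,\|x\|$, and $\psi$ is readily seen to be positively homogeneous, superadditive, and to satisfy $\psi\ge 0$ and $\psi\ge\langle\phi,\cdot\rangle$ on $E_+$. Since the superlinear $\psi$ is dominated on $E_+$ by the sublinear functional $x\mapsto\gamma\|\phi\|\,\|x\|$, a sandwich form of the Hahn--Banach theorem produces a bounded linear functional $\psi_1$ with $\psi\le\psi_1$ on $E_+$ and $\psi_1\le\gamma\|\phi\|\,\|\cdot\|$ on all of $E$. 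Then $\psi_1\in E_+'$ and $\psi_2:=\psi_1-\phi\in E_+'$ give $\phi=\psi_1-\psi_2$, so $E_+'$ is generating, with the norms controlled by $\gamma\|\phi\|$.

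I expect this last implication to be the main obstacle. The delicate point is that $E$ is only a general ordered Banach space and not a lattice, so $\psi$ is genuinely only superadditive; one cannot hope that $\psi$ itself be linear, and it is the sandwich theorem rather than a Riesz decomposition that rescues the argument, with normality entering essentially to supply both finiteness of $\psi$ and the sublinear majorant. A secondary subtlety is the non-generating case, where $E_+'$ need not be a proper cone and $E_+'-E_+'$ need not be weak$^*$ closed; working with the quantitative decomposition estimate throughout sidesteps any closure issue and keeps the passage to (iii) clean.
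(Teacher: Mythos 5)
Your proposal is correct, and it establishes exactly the same cycle of implications as the paper; the difference lies in how the hardest arrow is handled. Your (iii)~$\Rightarrow$~(i) (monotonicity of $\tnorm{{\cdot}}$ applied to $0\leq w-a\leq b-a$) and your (ii)~$\Rightarrow$~(iii) (apply the decomposition estimate \eqref{eq:normal-cone-estimate} in the Banach space $E'$ with the norm-closed generating cone $E_+'$, then estimate $|\langle\phi,u\rangle|\leq\gamma\|\phi\|\tnorm{u}$) coincide with the paper's arguments up to cosmetic details: the paper picks a single norming functional via Hahn--Banach where you take a supremum over the dual unit ball. For (i)~$\Rightarrow$~(ii), however, the paper simply cites \cite[Theorem~2.26]{aliprantis:07:cd} (Krein's theorem that a normal cone has generating dual wedge), whereas you supply a self-contained proof: the superlinear envelope $\psi(x)=\sup\{\langle\phi,z\rangle\colon 0\leq z\leq x\}$ on $E_+$, dominated by the sublinear majorant $\gamma\|\phi\|\,\|\cdot\|$, plus a sandwich (Mazur--Orlicz/K\"onig) form of Hahn--Banach producing $\psi_1\in E_+'$ with $\psi_1-\phi\in E_+'$. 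This is essentially the classical argument behind the citation; what it buys is a self-contained appendix and an explicit quantitative bound $\|\psi_1\|+\|\psi_1-\phi\|\leq(2\gamma+1)\|\phi\|$, at the cost of invoking a stronger-than-textbook version of Hahn--Banach. One small step you elide: the paper's definition of normality is that order intervals are norm bounded, while your construction of the sublinear majorant uses the uniform constant $\gamma$ with $0\leq z\leq x\implies\|z\|\leq\gamma\|x\|$ (semi-monotonicity). These are equivalent, but this needs a short standard argument (e.g.\ if $0\leq z_n\leq x_n$ with $\|x_n\|=1$ and $\|z_n\|>n^3$, then $x:=\sum_n n^{-2}x_n$ has the unbounded sequence $n^{-2}z_n$ in $[0,x]$, using that the cone is closed and proper); mere boundedness of each individual interval $[0,x]$ gives finiteness of $\psi$ but not the uniform sublinear bound your sandwich argument requires.
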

Note that the norm $\tnorm{{\cdot}}$ is \emph{monotone}, that is $0\leq u\leq v$ implies that $\tnorm{u}\leq\tnorm{v}$.
\begin{proof}
  $(i)\implies(ii)$: See \cite[Theorem~2.26]{aliprantis:07:cd}, where a proof is given for ordered locally convex spaces.

  $(ii)\implies(iii)$: Let $v\in E$. By definition of the dual norm we have $\tnorm{v}\leq\|v\|$. To estimate the other direction use the Hahn-Banach Theorem to choose $v'\in E'$ such that $\|v'\|=1$ and $\langle v',v\rangle=1$. Using \eqref{eq:normal-cone-estimate} we find $v_1',v_2'\in E_+'$ such that $v'=v_1'-v_2'$ and $\|v_1'\|+\|v_2\|\leq \gamma$. Thus
  \begin{equation*}
    \|v\|=\langle v',v\rangle
    =\langle v_1'-v_2',v\rangle
    \leq |\langle v_1',v\rangle|+|\langle v_2',v\rangle
    \leq \|v_1'\|\tnorm{v}+\|v_2'\|\tnorm{v}
    \leq\gamma\tnorm{v}
  \end{equation*}
  by definition of $\tnorm{{\cdot}}$. Hence $\tnorm{{\cdot}}$ is an equivalent norm on $E$.

  $(iii)\implies(i)$: We may assume without loss of generality that the norm on $E$ is monotone. Let $[a,b]$ be an order interval. If $x\in [a,b]$, then $0\leq x-a\leq b-a$ and thus by the monotonicity of the norm
  \begin{equation*}
    \|x\|=\|a+x-a\|\leq \|a\|+\|x-a\|\leq \|a\|+\|b-a\|\leq 2(\|a\|+\|b\|).
  \end{equation*}
  Hence $[a,b]$ is bounded and thus $E_+$ normal.
\end{proof}
We next consider spaces with order continuous norm.
\begin{examples}
  (a) Let $E$ be an ordered Banach space such that there exists $p\in[1,\infty)$ with
  \begin{equation}
    \label{eq:p-identity}
    \|u+v\|^p=\|u\|^p+\|v\|^p
  \end{equation}
  for all $u,v\in E_+$. Then $E$ has order continuous norm.

  Let $(x_n)_{n\geq 1}$ be a sequence in $E$ with $0\leq x_n\leq x_{n+1}\leq b$ for all $n\in\mathbb N$. Letting $x_0:=0$ we then have
  \begin{equation*}
    x_{n+1}=(x_1-x_0)+(x_2-x_1)+(x_3-x_2)+\dots+(x_{n+1}-x_n)
  \end{equation*}
  for all $n\in\mathbb N$. It follows that
  \begin{align*}
    \sum_{k=0}^n\|x_{k+1}-x_k\|^p
     & =\Bigl\|\sum_{k=0}^n(x_{k+1}-x_k)\Bigr\|^p
    =\|x_{n+1}\|^p                                \\
     & \leq\|x_{n+1}\|^p+\|b-x_{n+1}\|^p
    =\|x_{n+1}+(b-x_{n+1})\|^p
    =\|b\|^p<\infty
  \end{align*}
  for all $n\in\mathbb N$. Choose $n_0\in\mathbb N$ such that
  \begin{equation*}
    \sum_{k=n_0+1}^n\|x_{k+1}-x_k\|^p<\varepsilon^p.
  \end{equation*}
  For $m>n\geq n_0$ we have
  \begin{align*}
    \|x_m-x_n\|^p
    =\sum_{k=n+1}^m\|x_k-x_{k-1}\|^p<\varepsilon^p.
  \end{align*}
  Hence $(x_n)$ is a Cauchy sequence and by the completeness $x_n\to x$ for some $x\in E$.

  (b) By (a) the $L^p$-spaces have order continuous norm for $1\leq p<\infty$, but not for $p=\infty$.

  (c) If $E_+$ is normal and $E$ is reflexive, then $E$ has order continuous norm. In fact, let $0\leq u_n\leq u_{n+1}\leq u$ for all $n\in\mathbb N$. As $(u_n)$ is bounded and $E$ is reflexive there exists a weakly convergent sub-sequence $(u_{n_k})_{k\in\mathbb N}$. The Dini argument in the proof of Proposition~\ref{prop:uniform-convergence} implies that $(u_{n_k})_{k\in\mathbb N}$ converges in norm. Lemma~\ref{lem:monotone-norm-convergence} implies that $(u_n)_{n\in\mathbb N}$ converges.

  (d) If order intervals are weakly compact, then $E$ has order continuous norm. This follows from the argument in (c). A Banach lattice has order continuous norm if and only if order intervals are weakly compact. Note that order intervals are closed and convex without further hypotheses on the ordered Banach space. As a consequence they are always weakly closed.
\end{examples}

\pdfbookmark[1]{\refname}{biblio}%

\end{document}